\documentclass{amsart}
\usepackage{amssymb,amsmath,amsthm,mathrsfs} 
\usepackage[backend=biber, style=alphabetic, doi=false, url=false, isbn=false, giveninits=true, maxbibnames=99]{biblatex}
\usepackage{graphicx} 
\usepackage{verbatim}
\usepackage{tikz-cd} 
\usepackage{tikz}
\usepackage{hyperref}
\usepackage{geometry}

\usepackage{booktabs}
\usepackage{comment}

\usepackage{amsthm}
\usepackage{amssymb}
\usepackage{mathtools}
\usepackage{amsmath}
\usepackage{mathrsfs}
\usepackage{xcolor}
\usepackage{hyperref}

\usepackage{calligra}
\usepackage{tikz-cd}
\usepackage{graphicx} 
\usepackage{verbatim}
\usepackage{tikz-cd} 
\usepackage{tikz}
\usepackage{tabularx}

\usepackage{mathtools}
\usepackage{verbatim}
\usepackage{tikz-cd} 
\usepackage{tikz}
\usepackage{hyperref}
\usepackage{tabularx}

\usepackage{mathtools}
\usepackage{float}
\usepackage{booktabs}

\usepackage{enumerate}

\newtheorem{theorem}{Theorem}[section]
\newtheorem{corollary}[theorem]{Corollary}
\newtheorem{lemma}[theorem]{Lemma}
\newtheorem{proposition}[theorem]{Proposition}
\newtheorem{definition}[theorem]{Definition}
\newtheorem{remark}[theorem]{Remark}

\newtheorem{example}[theorem]{Example}

\newtheorem{claim}[theorem]{Claim}

\newcommand{\Z}{\mathbb{Z}}
\newcommand{\Q}{\mathbb{Q}}

\newcommand{\OO}{\mathcal{O}}

\newcommand{\G}{\mathscr{G}}
\newcommand{\C}{\mathfrak{C}}

\newcommand{\im}{\text{Im}}

\newcommand{\p}{\mathfrak{p}}

\newcommand{\PP}{\mathfrak{P}}

 \DeclareFontFamily{U}{wncy}{}
  \DeclareFontShape{U}{wncy}{m}{n}{<->wncyr10}{}
   \DeclareSymbolFont{mcy}{U}{wncy}{m}{n}   
  \DeclareMathSymbol{\Sh}{\mathord}{mcy}{"58}
\DeclareMathOperator{\Ima}{Im}

\DeclareMathOperator{\Hom}{Hom}

\DeclareMathOperator{\Ass}{Ass}
\DeclareMathOperator{\Ht}{ht}
\DeclareMathOperator{\Ann}{Ann}
\DeclareMathOperator{\ch}{Ch}
\DeclareMathOperator{\Gal}{Gal}
\DeclareMathOperator{\coker}{coker}
\DeclareMathOperator{\Ext}{Ext}
\DeclareMathOperator{\Tor}{Tor}
\DeclareMathOperator{\depth}{depth}
\DeclareMathOperator{\Spec}{Spec}
\DeclareMathOperator{\Supp}{Supp}
\DeclareMathOperator{\grade}{grade}
\DeclareMathOperator{\Res}{Res}

\newcommand{\A}{\mathbb{A}}

\newcommand{\I}{\mathbb{I}}

\newcommand{\D}{\mathfrak{D}}

\title{On the Quotient of a Pseudo-null Module}

\author{Peikai Qi}
\email{qipeikai@amss.ac.cn}
\address{Morningside Center of Mathematics, Chinese Academy of Science, Beijing}
\date{\today}
\thanks{Thanks to Ruichen Xu, Jie Yang, and Preston Wake for their insightful discussions. The paper grows out of the author's thesis. }

\begin{document}
\maketitle

\begin{abstract}
Motivated by questions arising in noncommutative Iwasawa theory, let $R$ be a (not necessarily commutative) ring, $T \in R$ a regular central element, and $M$ a pseudo-null $R$-module. We investigate necessary and sufficient conditions under which the quotient $M/TM$ is pseudo-null as an $R/TR$-module. 

We first give necessary and sufficient conditions in terms of associated prime ideals when $R$ is commutative. Then we apply this to the case when $R$ is a Krull domain and obtain a precise relationship between the characteristic ideals of $M/TM$ and the $T$-torsion submodule $M[T]$. We give necessary and sufficient conditions in terms of $\Ext$ groups when $R$ is a noncommutative ring and then compare them with the criterion when $R$ is commutative. Lastly, we give an application to noncommutative Iwasawa theory. Roughly speaking, if `big' dual fine Selmer group is pseudo-null, then `most' specialized dual fine Selmer group is pseudo-null.
\end{abstract}
\section{Introduction}

Pseudo-null modules are negligible for characteristic ideals over a
commutative Iwasawa algebra, but they need not remain so after
quotient. Quotienting by a regular element can turn a component
of codimension two into one of codimension one. Consequently, a
pseudo-null module may contribute a nontrivial characteristic ideal
after quotient. 
\begin{definition}\label{def}
Let \(R\) be a commutative ring. A finitely generated \(R\)-module
\(M\) is \emph{pseudo-null} if \(M_P=0\) for every prime ideal \(P\)
with \(\Ht_R(P)\leq 1\), or equivalently if
\(\Ht_R(\Ann_R(M))\geq 2\).
\end{definition}

Write \(\Ass_R(M)\) for the set of associated primes of \(M\), and
\(M[T]=\{m\in M:Tm=0\}\).

\begin{theorem}\label{ht=2 intro}
Let \(R\) be a Noetherian commutative ring, let \(T\in R\) be a
regular element, and let \(M\) be a finitely generated pseudo-null
\(R\)-module. Then \(M/TM\) is \textbf{not} pseudo-null over \(R/TR\) if and
only if there exists \(P\in\Ass_R(M)\) such that $\Ht_R(P)=2
$ and $T\in P.$
\end{theorem}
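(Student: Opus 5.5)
The plan is to translate pseudo-nullity over $R/TR$ into a statement about primes of $R$ containing $T$, compare heights using the regularity of $T$, and then pass between supports and associated primes using that $M$ is pseudo-null.

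First, primes of $R/TR$ are exactly $Q/TR$ with $Q\supseteq TR$ prime in $R$. I will show that $\Ht_{R/TR}(Q/TR)=\Ht_R(Q)-1$ for such $Q$. Localize at $Q$: $R_Q$ is a Noetherian local ring, and $T$ is a non-zero-divisor in $R_Q$. Hence $T$ avoids all associated primes of $R_Q$, in particular all minimal primes. By the standard dimension theory for Noetherian local rings, $\dim R_Q/TR_Q=\dim R_Q-1$ when $T$ lies in no minimal prime. This is the identity I want.

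Second, I will identify the support of the quotient: $\Supp_R(M/TM)=\Supp_R(M)\cap V(T)$. Indeed, $(M/TM)_Q=M_Q/TM_Q$. If $T\in Q$ and $M_Q\neq 0$, Nakayama's lemma (with $M_Q$ finitely generated) gives $M_Q/TM_Q\neq 0$. The reverse inclusion is clear. Therefore, by Definition \ref{def} applied over $R/TR$, the module $M/TM$ fails to be pseudo-null if and only if there is a prime $Q\ni T$ with $Q\in\Supp_R(M)$ and $\Ht_R(Q)-1\le 1$, that is, $\Ht_R(Q)\le 2$.

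Third, I use pseudo-nullity of $M$. Every prime in $\Supp_R(M)$ has height at least $2$, so such a $Q$ has height exactly $2$. Moreover $Q$ is minimal in $\Supp_R(M)$: any strictly smaller prime has height at most $1$ and so is not in the support. Minimal primes of the support of a finitely generated module over a Noetherian ring are associated primes, so $Q\in\Ass_R(M)$. This proves the direction ``not pseudo-null $\Rightarrow$ such a $P$ exists''.

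Conversely, suppose $P\in\Ass_R(M)$ with $\Ht_R(P)=2$ and $T\in P$. Then $P\in\Supp_R(M)\cap V(T)$, so $(M/TM)_P\neq 0$ by the second step. By the first step, $P/TR$ has height $1$ in $R/TR$, so $M/TM$ is not pseudo-null over $R/TR$.

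The only step needing care is the height formula in the first step. It needs just that $T$ is regular, hence outside every minimal prime of $R_Q$, together with Krull's dimension inequality $\dim R_Q/TR_Q\ge\dim R_Q-1$ and the strict drop in dimension for an element outside all minimal primes.
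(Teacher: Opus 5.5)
Your proof is correct, and it takes a genuinely different and shorter route than the paper.

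\textbf{The paper's route.} It works through a primary decomposition $0=\bigcap_i Y_i$. It embeds $M$ into $\bigoplus_i M/Y_i$ and proves that the cokernel $D$ lies in $\D^2_R$, so that $D[T]$ and $D/TD$ are pseudo-null over $R/TR$. It then treats each coprimary piece $M/Y_i$ separately. For one direction it bounds $\Ht_R(J_i+TR)$, where $J_i=\Ann_R(M/Y_i)$. For the other it runs a d\'evissage through $M/(Y_i+T^kM)$ using $T^n\in J_i$.

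\textbf{Your route.} You bypass all of this by working with supports. You use three facts:
\begin{itemize}
\item the identity $\Supp_R(M/TM)=\Supp_R(M)\cap V(T)$, which follows from Nakayama;
\item the height drop $\Ht_{R/TR}(Q/TR)=\Ht_R(Q)-1$, which is exactly the paper's Lemma \ref{quotient};
\item a height-two prime in the support of a pseudo-null module is minimal in the support, hence associated.
\end{itemize}

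\textbf{Comparison.} Your argument is brief and gives a sharper conclusion. It identifies the height-one primes of $R/TR$ in $\Supp(M/TM)$ as exactly the $P/TR$ with $P\in\Ass_R(M)$, $\Ht_R(P)=2$ and $T\in P$. The paper's decomposition produces the six-term exact sequence relating $M[T]$, $M/TM$ and the coprimary pieces, which it reuses in Section 3 for Proposition \ref{char}.

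Your localization viewpoint handles that too. For a height-two prime $Q\ni T$, the module $M_Q$ has finite length. The sequence $0\to M[T]_Q\to M_Q\xrightarrow{T}M_Q\to (M/TM)_Q\to 0$ then gives equality of lengths directly, as in the proof of Lemma \ref{ass}. So the primary decomposition is not needed there either.
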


In particular, \(T\) need not act injectively on \(M\): only the
height-two associated primes obstruct descent, while associated
primes of larger height do not. The criterion applies to arbitrary
Noetherian commutative rings, without regularity or Cohen--Macaulay
hypotheses on \(R\). Its proof is inspired by
\cite{ozaki30iwasawa} and \cite[Lemma~3.5]{Ochiai2005}.

When characteristic ideals are defined, we can also measure the
height-one contribution created by quotient.
\begin{proposition}
Assume that \(R\) and \(R/TR\) are Noetherian Krull domains.
For every finitely generated
pseudo-null \(R\)-module \(M\), one has
\[
\ch_{R/TR}(M/TM)=\ch_{R/TR}(M[T]).
\]
\end{proposition}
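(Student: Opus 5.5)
The plan is to use the exact sequence
\[
0\to M[T]\to M\xrightarrow{\;T\;} M\to M/TM\to 0
\]
and compare lengths at height-one primes of $R/TR$. Both $M[T]$ and $M/TM$ are annihilated by $T$, so they are finitely generated $R/TR$-modules. By definition, $\ch_{R/TR}(N)$ for a finitely generated torsion $R/TR$-module $N$ is $\prod_{\mathfrak q}\mathfrak q^{(\ell(N_{\mathfrak q}))}$, the product running over height-one primes $\mathfrak q$ of the Krull domain $R/TR$ (divisorial powers). If $N$ is not torsion, the convention is that $\ch=0$. It therefore suffices to show two things for every height-one prime $\mathfrak q$ of $R/TR$: first, $(M/TM)_{\mathfrak q}$ has finite length if and only if $M[T]_{\mathfrak q}$ does; second, in that case the two lengths agree.

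Fix such a $\mathfrak q$ and let $Q$ be its preimage in $R$, so $T\in Q$. Since $T$ is regular in the domain $R$, Krull's principal ideal theorem together with $\Ht(\mathfrak q)=1$ gives $\Ht_R(Q)=2$ (the Krull domain hypotheses give catenarity-type behaviour here; one should check $\Ht Q = 1+\Ht \mathfrak q$, which holds because $R_Q$ is a local Noetherian domain and $\dim R_Q/TR_Q=\dim R_Q-1$). Localize the exact sequence at $Q$; localization at $Q$ in $R$ agrees with localization at $\mathfrak q$ for $R/TR$-modules. Now $M_Q$ is a finitely generated $R_Q$-module. Because $M$ is pseudo-null, $M_P=0$ for all primes $P\subsetneq Q$, since these have height $\le 1$. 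Hence $\Supp(M_Q)\subseteq\{QR_Q\}$, and $M_Q$ has finite length over $R_Q$.

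Additivity of length on
\[
0\to M[T]_Q\to M_Q\xrightarrow{T} M_Q\to (M/TM)_Q\to 0
\]
then gives $\ell_{R_Q}(M[T]_Q)=\ell_{R_Q}((M/TM)_Q)$, and both are finite. Since $T$ kills both modules, the $R_Q$-length equals the $(R/TR)_{\mathfrak q}$-length. So at every height-one $\mathfrak q$ the lengths coincide and are finite. Both modules are therefore torsion over $R/TR$ (a finitely generated module over a domain with zero localization at the generic point is torsion; the generic point lies under some height-one $\mathfrak q$ where the length is finite). Their characteristic ideals then agree prime by prime.

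The main obstacle is the height bookkeeping: verifying that every height-one prime of $R/TR$ lifts to a height-two prime of $R$. This is exactly what makes $M_Q$ finite length, so that the Euler-characteristic cancellation of the two copies of $M_Q$ is legitimate. If the general dimension formula is not available, I would argue directly. A chain $P_0\subsetneq P_1\subsetneq Q$ exists because $Q\supsetneq$ a minimal prime over $T$ of height one (principal ideal theorem) $\supsetneq 0$. Conversely, $\Ht Q\le 2$ follows since $\dim R_Q/T\ge \dim R_Q-1$ in a Noetherian local ring. That second direction is the one actually needed: we need $\Ht Q\ge 2$ so that primes properly inside $Q$ have height $\le1$, and that holds by the explicit chain.
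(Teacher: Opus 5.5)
Your argument is correct, and it takes a shorter route than the paper's. The paper first fixes a primary decomposition $0=\bigcap_i Y_i$ and embeds $M$ into $\bigoplus_i M/Y_i$ with cokernel $D\in\D_R^2$. Applying the snake lemma gives the six-term sequence $0\to M[T]\to\bigoplus_i(M/Y_i)[T]\to D[T]\to M/TM\to\bigoplus_i M/(Y_i+TM)\to D/TD\to 0$. The paper then discards $D[T]$ and $D/TD$ as pseudo-null over $R/TR$, and only after that performs the localize-at-$Q$ length computation (Lemma \ref{ass}) on each coprimary piece $M/Y_i$.

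You observe that this computation uses nothing about coprimarity. Pseudo-nullity alone makes $M_Q$ of finite length for every $Q$ lying over a height-one prime of $R/TR$, so the two copies of $M_Q$ cancel for $M$ itself. The primary decomposition and the module $D$ then become unnecessary. You also check explicitly that $M[T]$ and $M/TM$ are torsion over $R/TR$, which the paper leaves implicit.

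What the paper's detour buys is a prime-by-prime description: the contributions to $\ch_{R/TR}(M/TM)$ are powers of $\pi(P)$ for height-two associated primes $P\ni T$, which links the formula to Theorem \ref{ht=2}. Your route gives this with one more line. Since $T\in Q$, Nakayama gives $\ell((M/TM)_Q)>0$ iff $M_Q\neq 0$. A height-two prime of $\Supp M$ is minimal in $\Supp M$ because $M$ is pseudo-null, hence lies in $\Ass_R(M)$.

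Two small corrections.
\begin{enumerate}
\item The justification in your last sentence is garbled. What makes every prime strictly inside $Q$ have height $\le 1$ is $\Ht Q\le 2$, which comes from $\dim R_Q/TR_Q\ge\dim R_Q-1$. The inequality $\Ht Q\ge 2$ is not needed, since $\Ht Q\le 1$ would give $M_Q=0$ anyway. You prove both inequalities, so nothing breaks.
\item Your torsion argument presupposes that $R/TR$ has a height-one prime. It is cleaner to localize at the generic point of $R/TR$, i.e. at the height-one prime $TR$ of $R$. There $M_{TR}=0$ kills both $M[T]$ and $M/TM$, and this also covers the case $\dim R/TR=0$.
\end{enumerate}
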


In particular, the height-one contribution of \(M/TM\) is measured
by the \(T\)-torsion of \(M\). Section~3 gives the
corresponding formula for a finitely generated torsion module whose
quotient remains torsion. These results extend
\cite[Propositions~2.7 and~2.10]{bandini2013characteristic} beyond
the power-series setting \(R=A[[T]]\).

For noncommutative rings, we use a homological definition of
pseudo-nullity.

\begin{definition}
Let \(R\) be a ring. A right \(R\)-module \(M\) is \emph{pseudo-null}
if \(\Ext_R^i(M',R)=0\) for \(i=0,1\) and every submodule
\(M'\subseteq M\).
\end{definition}

For finitely generated modules over a Noetherian commutative ring
satisfying Serre's condition \((S_2)\), this agrees with
Definition~\ref{def}; see Proposition~\ref{Sn}. The following
criterion replaces the associated-prime condition by an
\(\Ext^2\)-obstruction.

\begin{theorem}\label{intro non ht=2}
Let \(R\) be a ring, let \(T\in R\) be a regular central element,
and let \(M\) be a pseudo-null right \(R\)-module. Then \(M/MT\)
is \textbf{not} pseudo-null over \(R/TR\) if and only if there exists a
submodule \(M'\) with $ MT\subseteq M'\subseteq M $ such that $
\Ext_R^2(M',R)[T]\neq 0$.
\end{theorem}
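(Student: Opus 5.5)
The plan is to reduce everything to $\Ext$ over $R$ via Rees' lemma. For a right $R$-module $N$ with $NT=0$, and $T$ regular and central, one has
\[
\Ext^i_{R/TR}(N,R/TR)\cong \Ext^{i+1}_R(N,R)\quad (i\ge 0).
\]
The submodules of $M/MT$ are exactly the modules $M'/MT$ with $MT\subseteq M'\subseteq M$. Hence $M/MT$ is pseudo-null over $R/TR$ if and only if $\Ext^1_R(M'/MT,R)=0$ and $\Ext^2_R(M'/MT,R)=0$ for every such $M'$. I will analyse these two groups using the short exact sequence $0\to MT\xrightarrow{\iota} M'\to M'/MT\to 0$.

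Since $MT$ and $M'$ are submodules of the pseudo-null module $M$, their $\Hom(-,R)$ and $\Ext^1(-,R)$ vanish. The long exact sequence then gives two things. First, $\Ext^1_R(M'/MT,R)=0$ automatically, since it sits between $\Hom(MT,R)=0$ and $\Ext^1(M',R)=0$. Second,
\[
0\to \Ext^2_R(M'/MT,R)\to \Ext^2_R(M',R)\xrightarrow{\iota^*}\Ext^2_R(MT,R).
\]
So the only obstruction is $\ker\iota^*$, and it remains to compare $\ker \iota^*$ with the $T$-torsion of $\Ext^2_R(M',R)$.

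Let $\mu:M'\to MT$ be $m\mapsto mT$. Because $T$ is central, $\iota\circ\mu$ is right multiplication by $T$ on $M'$, so $\mu^*\iota^*$ is multiplication by $T$ on $\Ext^2_R(M',R)$. This gives $\ker\iota^*\subseteq \Ext^2_R(M',R)[T]$. Consequently, if $M/MT$ is not pseudo-null, then some $M'$ has $\Ext^2_R(M',R)[T]\supseteq\Ext^2_R(M'/MT,R)\neq 0$, which proves the "only if" direction.

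For the "if" direction I need the reverse inclusion, which holds whenever $\mu^*$ is injective. For $M'=M$ the map $\mu$ is surjective with kernel $M[T]$. Since $\Ext^1_R(M[T],R)=0$, the sequence $0\to M[T]\to M\to MT\to 0$ shows $\mu^*$ is injective, and so $\Ext^2_R(M/MT,R)=\Ext^2_R(M,R)[T]$.

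For a general $M'$, the same computation with the sequences $0\to M'[T]\to M'\to M'T\to 0$ and $0\to M'T\to M'\to M'/M'T\to 0$ should give $\Ext^2_R(M'/M'T,R)\cong \Ext^2_R(M',R)[T]$. The main obstacle is to pass from a nonzero $\Ext^2_R(M'/M'T,R)$ to a nonzero $\Ext^2_R(M''/MT,R)$ for some $M''\supseteq MT$; the module $M'/M'T$ need not be a submodule of $M/MT$. I would first try the exact sequence
\[
0\to MT/M'T\to M'/M'T\to M'/MT\to 0
\]
and argue by contradiction. Suppose all $\Ext^2_R(M''/MT,R)$ vanish, so $M/MT$ is pseudo-null over $R/TR$. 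Then I would try to show that $\Ext^2_R(MT/M'T,R)$ also vanishes, using that $MT/M'T$ is $T$-killed and is the image under $\cdot T$ of $M/M'$. This would force $\Ext^2_R(M'/M'T,R)=0$, a contradiction. If that fails, the fallback is to check whether the intended witness can always be taken to be $M'=M$, where the argument above already closes.
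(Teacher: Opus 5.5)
Your proposal is correct. Once the step you hedge on is filled in, it has the same skeleton as the paper's proof: Rees change of rings, the sequence $0\to MT/M'T\to M'/M'T\to M'/MT\to 0$, and closure of pseudo-nullity under submodules and quotients.

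That step does go through. The map $m\mapsto mT$ induces a surjection $M/M'\twoheadrightarrow MT/M'T$, and $M/M'$ is a quotient of $M/MT$. So if $M/MT$ is pseudo-null over $R/TR$, then so is $MT/M'T$ (Lemma~\ref{analog exact} over $R/TR$). Hence $\Ext^2_R(MT/M'T,R)\cong\Ext^1_{R/TR}(MT/M'T,R/TR)=0$. Together with $\Ext^2_R(M'/MT,R)=0$, this kills $\Ext^2_R(M'/M'T,R)\cong\Ext^2_R(M',R)[T]$.

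Drop the fallback. In the ``if'' direction the witness $M'$ is given to you, not chosen, so reducing to $M'=M$ would not prove the statement.

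The genuine difference is how you obtain the key isomorphism. The paper gets $\Ext^1_{R/TR}(M'/M'T,R/TR)\cong\Ext^2_R(M',R)[T]$ in two stages (Lemmas~\ref{non iso} and~\ref{non lift}). First, a Grothendieck spectral sequence compares $\Ext_{R/TR}(M'/M'T,R/TR)$ with $\Ext_R(M',R/TR)$. Then it uses the long exact sequence of $0\to R\xrightarrow{T}R\to R/TR\to 0$.

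You instead factor right multiplication by $T$ on $M'$ as $\iota\circ\mu$. Centrality of $T$ identifies $\mu^*\iota^*$ with multiplication by $T$ on $\Ext^2_R(M',R)$. You then get injectivity of $\mu^*$ from $\Ext^1_R(M'[T],R)=0$.

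What your route buys:
\begin{itemize}
\item It is more elementary and needs only long exact $\Ext$ sequences over $R$.
\item In the ``only if'' direction you do not need the isomorphism at all, since $\ker\iota^*\subseteq\Ext^2_R(M',R)[T]$ already suffices.
\item It extends verbatim to the $\C^n_R$ version, with $\Ext^{n+1}$ in place of $\Ext^2$.
\end{itemize}
The paper's spectral-sequence route gives the comparison $\Ext^i_{R/TR}(M/MT,R/TR)\cong\Ext^i_R(M,R/TR)$ for all $i\le n$ at once, but this theorem does not need that extra information.
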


Theorem~\ref{non ht=2} proves higher-\(\Ext\) analogues without
Noetherian hypotheses. The criterion allows \(T\) to be a zero
divisor on \(M\), unlike the \(M\)-regularity hypothesis in the
graded Auslander--Gorenstein descent result
\cite[Corollary~4.4]{Levasseur1992}.
The connection with the commutative criterion is made explicit in
Proposition~\ref{ext ht}: if \(R\) is Noetherian, commutative, and
Cohen--Macaulay, and \(M\) is finitely generated and pseudo-null, then
\[
\Ass_R\!\bigl(\Ext_R^2(M,R)\bigr)
=
\{P\in\Ass_R(M):\Ht_R(P)=2\}.
\]
Thus the homological criterion recovers precisely the height-two
obstruction in Theorem~\ref{ht=2 intro}.

We now describe the arithmetic application. Let \(p\geq 3\), let
\(K\) be a number field, and let \(\I\) be a commutative complete
regular local ring of characteristic zero with finite residue field
of characteristic \(p\). Let \(\mathbb T\) be a finite free
\(\I\)-module with a continuous \(\I\)-linear action of \(G_K\), and
put
$\A=\mathbb T\otimes_\I
\Hom_{\Z_p,\mathrm{cts}}(\I,\Q_p/\Z_p)$.
Choose a finite set \(S\) containing the primes above \(p\), the
archimedean primes, and all primes where \(\mathbb T\) is ramified,
and let \(K_S\) be the maximal extension of \(K\) unramified outside
\(S\). Let \(L/K\) be a Galois extension contained in \(K_S\) and
containing the cyclotomic \(\Z_p\)-extension \(K^{\mathrm{cyc}}/K\).
Assume that \(G=\Gal(L/K)\) is a compact pro-\(p\) \(p\)-adic Lie
group without \(p\)-torsion, and put \(H=\Gal(L/K^{\mathrm{cyc}})\).

A \emph{specialization} is a local homomorphism
\(\phi:\I\to\OO_\phi\), where \(\OO_\phi\) is a commutative complete
regular local ring of characteristic zero, finite as an \(\I\)-module.
Write \(\PP_\phi=\ker\phi\) and
$\A_\phi=\mathbb T\otimes_\I
\Hom_{\Z_p,\mathrm{cts}}(\OO_\phi,\Q_p/\Z_p)$.
The notation \(Y_S(\A/L)\) denotes the Pontryagin dual of the fine
Selmer group, and similarly for \(\A_\phi\); see
Section~\ref{fine sel gp}.

\begin{theorem}\label{intro fine specialization}
Assume that \(Y_S(\A/L)\) is finitely generated over \(\I[[H]]\). Then \(Y_S(\A/L)\)  is pseudo-null over \(\I[[G]]\) if and only if there exists a Zariski
dense  subset \(U\subseteq\Spec(\I)\) such that
\(Y_S(\A_\phi/L)\) is pseudo-null over \(\OO_\phi[[G]]\) for every
specialization \(\phi\) with \(\PP_\phi\in U\).
\end{theorem}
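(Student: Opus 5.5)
The plan is to reduce the statement to the descent criterion of Theorem~\ref{intro non ht=2}, applied with $R=\Lambda:=\I[[G]]$ and $T$ a generator of a height-one prime of $\I$. Since $\I$ is regular local, it is a UFD. Hence every height-one prime is principal, $\PP=(T)$, and such a $T$ is a regular central element of $\Lambda$. I would first handle the specializations with $\Ht(\PP_\phi)=1$, where $\OO_\phi$ is finite over $\I/T\I$. At the end I would check that these alone already give a Zariski dense set, since height-one primes of $\I$ are dense in $\Spec(\I)$.

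\textbf{Step 1 (control).} I would establish a control theorem. For $\PP_\phi=(T)$ of height one, the natural map
\[
Y_S(\A/L)/T\,Y_S(\A/L)\otimes_{\I/T}\OO_\phi \longrightarrow Y_S(\A_\phi/L)
\]
should have kernel and cokernel that are finitely generated over $\OO_\phi[[H]]$. Moreover, they should be pseudo-null over $\OO_\phi[[G]]$ for all but finitely many such $\PP_\phi$. The argument would compare the defining sequences of the fine Selmer groups over $K_S/L$ using $\A_\phi\simeq\A[\PP_\phi]\otimes(\text{finite})$. The error terms are then $H^0$'s and $T$-torsion of $H^1$ and local $H^1$'s at the finitely many primes of $S$. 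Each of these is a module whose $\I$-support meets only finitely many height-one primes, and this is where the hypothesis that $Y_S(\A/L)$ is finitely generated over $\I[[H]]$ is used. Since pseudo-nullity is stable under extensions, and finite base change $\I/T\to\OO_\phi$ preserves it, the pseudo-nullity of $Y_S(\A_\phi/L)$ becomes equivalent to that of $M/TM$ over $\Lambda/T\Lambda$, where $M=Y_S(\A/L)$.

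\textbf{Step 2 ($\Rightarrow$).} Assume $M$ is pseudo-null over $\Lambda$. By Theorem~\ref{intro non ht=2}, $M/TM$ fails to be pseudo-null only if $\Ext^2_\Lambda(M',\Lambda)[T]\neq0$ for some $TM\subseteq M'\subseteq M$. Because $\Lambda$ is Auslander regular and Noetherian, I would bound these obstructions uniformly, for instance by dévissage through the filtration $TM\subseteq M'\subseteq M$ using the long exact $\Ext$ sequence. The aim is to show that every such $\Ext^2$ has nonzero $T$-torsion only if $T$ lies in one of the finitely many height-one primes of $\I$ in the $\I$-support of a torsion part of $\Ext^2_\Lambda(M,\Lambda)$ or $\Ext^3_\Lambda(M,\Lambda)$. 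Here I would use that $\Ext^i_\Lambda(M,\Lambda)$ is finitely generated over $\I[[H]]$, which follows from the same property of $M$. Removing these finitely many primes, together with the exceptional ones from Step~1, leaves a dense set $U$.

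\textbf{Step 3 ($\Leftarrow$).} Assume $M$ is not pseudo-null. Then either $\Hom_\Lambda(M',\Lambda)\ne0$ or $\Ext^1_\Lambda(M',\Lambda)\neq0$ for some $M'$. Using $\Lambda\xrightarrow{T}\Lambda\to\Lambda/T\to0$ and the Rees isomorphism $\Ext^{i}_{\Lambda/T}(N,\Lambda/T)\cong\Ext^{i+1}_\Lambda(N,\Lambda)$ for $T$-torsion $N$, I would show the following: for $T$ avoiding the finitely many height-one primes supporting the $\I$-torsion of $\Ext^{\le2}_\Lambda(M,\Lambda)$, the module $\Ext^{j}_{\Lambda/T}(M'/M'T,\Lambda/T)$ is nonzero for $j\le1$. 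Hence $M/TM$ is not pseudo-null for a cofinite set of height-one $\PP$. Any dense $U$ must meet this set, which gives a contradiction via Step~1.

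I expect Step~1 to be the main obstacle. Making the error terms pseudo-null requires knowing that the local terms at primes of $S$ that split finitely in $L$ behave well under specialization, and this needs careful use of the $\I[[H]]$-finite generation. In Step~2 the delicate point is getting finiteness uniformly over all intermediate $M'$. If the dévissage fails, I would instead argue with the single canonical $\Ext^2_\Lambda(M,\Lambda)$ via Proposition~\ref{ext ht}-style reasoning on its $\I$-support.
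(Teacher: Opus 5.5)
There is a genuine gap in Step~3. Your contradiction needs the given dense set $U$ to contain a height-one prime outside a finite exceptional set. A Zariski dense subset of $\Spec(\I)$ need not contain any height-one prime at all. For instance, $\{(0)\}$ is dense because $\I$ is a domain. If $\dim\I\ge 3$, the set of primes $\PP$ with $p\notin\PP$ and $\dim\I/\PP=1$ is also dense, since the punctured spectrum of a Noetherian local ring is Jacobson. Each such $\PP$ is the kernel of a specialization to the normalization of $\I/\PP$, so the hypothesis is not vacuous there. Your Steps~1 and~3 only treat height-one $\PP_\phi=(T)$, so they give no information for such $U$.

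Two ingredients are missing:
\begin{enumerate}
\item[(i)] a control theorem whose good locus is a nonempty Zariski \emph{open} set, which therefore meets every dense $U$;
\item[(ii)] an ascent statement from a \emph{single} specialization of arbitrary height.
\end{enumerate}
The paper supplies both. Theorem~\ref{generic control} handles the error terms $\Ext^i_\I(\OO_\phi,\A^{G_{L,S}})$ and their local analogues by generic freeness of $(\A^{G_{L,S}})^\vee$ over $\I$. Contrary to what you say in Step~1, no finite generation of $Y_S(\A/L)$ over $\I[[H]]$ is used there. Lemma~\ref{lemma complementary} lifts torsionness over $\OO_\phi[[H]]$ to $\I[[H]]$ via a Cohen factorization $\I\to\mathbb J\to\OO_\phi$ and \cite[Corollary~4.13]{LimFineSelmer} applied along a regular sequence. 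Consequently, non-pseudo-nullity of $M$ survives at \emph{every} specialization. That is far stronger than the cofinite statement you try to extract from the Rees isomorphism.

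Step~2 is also missing its key input. Finite generation of $\Ext^2_\Lambda(M,\Lambda)$ over $\I[[H]]$ does not limit the height-one $T$ with nonzero $T$-torsion. A finitely generated $\I[[H]]$-module killed by $\mathfrak m_\I$ has nonzero $T$-torsion for every $T\in\mathfrak m_\I$, yet its $\I$-support contains no height-one prime. What makes finiteness true is purity. If $j_\Lambda(M)=2$, then $E=\Ext^2_\Lambda(M,\Lambda)$ is pure of grade~$2$, so every nonzero $\Lambda$-submodule of $E$ has grade~$1$ over $\I[[H]]$. Now $E[T]$ lies in the $\I$-torsion of $E$, which is killed by a fixed nonzero $c\in\I$. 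If $c\notin(T)$, Lemma~\ref{non quotient} gives $\Ext^1_{\I[[H]]}(E[T],\I[[H]])\cong\Hom_{(\I/T)[[H]]}(E[T],(\I/T)[[H]])=0$, so $E[T]=0$. This is essentially the argument of Proposition~\ref{Jha}. By Remark~\ref{grade regular}, no d\'evissage over intermediate $M'$ is needed.

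For the theorem itself, the paper bypasses Theorem~\ref{intro non ht=2}. Theorem~\ref{generic quotient} picks nonzero annihilators $s_i\in\I[[H]]$ of generators of $M$, and an $a\in\I$ such that $\phi(a)\neq0$ forces $\phi(s_i)\neq 0$. This gives an open good locus containing primes of all heights directly.

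Finally, your density claim fails when $\dim\I=1$. There the only height-one prime is $\mathfrak m_\I$, which is closed and is the kernel of no specialization, while every specialization has $\PP_\phi=0$.
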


This is a combination of Theorem \ref{generalize Jha} and Corollary \ref{main converse} in the paper. Most results of this type require a dimension hypothesis such as
$(\mathrm{Dim}_S)$, which requires the $p$-adic Lie extension $G$ to have
dimension at least two and the decomposition group
$G_v$ to satisfy $\dim G_v\geq 2$ for every $v\in S$.
For example, Jha's specialization theorem
\cite[Theorem~10]{Jha2012} assumes $(\mathrm{Dim}_S)$, as well as
$(\mathrm{Irr})$, requiring the residual Galois representation to be
absolutely irreducible and the field \(K\) is
abelian over \(\Q\).  In contrast, our result does not require
$(\mathrm{Dim}_S)$ or any irreducibility or large-image hypothesis on
the Galois representation or abelian condition on field extension $K/\Q$, and it applies to Galois representations
over a general complete regular local coefficient ring. 

The proof was separated into two steps. In particular, the first step, Theorem~\ref{generic control},
shows that the natural map $Y_S(\A/L)\otimes_\I\OO_\phi
\longrightarrow
Y_S(\A_\phi/L)$
has pseudo-null kernel and cokernel for Zariski generic
specializations. This statement requires neither pseudo-nullity of
\(Y_S(\A/L)\) nor its finite generation over \(\I[[H]]\).

 Under the assumption Hecke algebra $\I\cong \OO[[W]]$, Coates--Sujatha prove in \cite[Lemma 6.1]{coates_sujatha_2012} that, under certain conditions, if $M/(t_\xi M)$ is pseudo-null over $\OO_\xi[[G]]$ for an arithemetic specialization $\xi$, then $M$ is pseudo-null over $\I[[G]]$. Our Theorem \ref{intro non ht=2} deals with the converse direction. We assume first that $M$ is pseudo-null and then find a necessary and sufficient condition for the quotient $M/(t_\xi M) $ to be pseudo-null.
Independently, Lemma~\ref{lemma complementary} also establishes the complementary algebraic ascent statement: pseudo-nullity of a single
\(M\otimes_\I\OO_\phi\) implies pseudo-nullity of \(M\) when assuming  \(M\) is finitely generated over \(\I[[H]]\). This extends
\cite[Lemma~6.1]{coates_sujatha_2012} to the specializations for general ring $\I$ considered here. 

Finally, the exact quotient criterion gives more information than
generic descent for height-one coefficient primes. We apply Theorem \ref{intro non ht=2} to a Hida family to give a new proof of  \cite[Theorem 10]{Jha2012}. 
Proposition~\ref{Jha} shows that, if \(M\) is pseudo-null over
\(\Lambda=\I[[G]]\) and finitely generated over \(\I[[H]]\), then
\(M/M\PP\) is pseudo-null over \(\Lambda/\PP\Lambda\) for all but
finitely many height-one primes \(\PP\subset\I\). The exceptional
primes are exactly those satisfying
$\Ext_\Lambda^2(M,\Lambda)[\PP]\neq 0$.
When \(G\cong\Z_p^d\), the number of exceptional primes is bounded by the number of
height-two associated primes of \(M\).

\section{Proof of the main theorem for commutative rings}
We prove the following theorem in this section. 

\begin{theorem} \label{ht=2}
    Let $R$ be a Noetherian commutative ring and let $T$ be a regular element in $R$. Let $M$ be a finitely generated $R$-module. Assume $M$ is a pseudo-null $R$-module. Then $M/TM$ is \textbf{not} pseudo-null as an $R/T$-module if and only if $T\in P$, where $P$ is an associated prime of $M$ and $\Ht_R(P)=2$.
\end{theorem}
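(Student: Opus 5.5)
The plan is to work entirely with localizations at primes $Q\supseteq TR$. Such primes correspond to the primes $Q/TR$ of $R/TR$, and
\[(M/TM)_{Q/TR}\cong M_Q/TM_Q .\]
The one dimension-theoretic input is this: for a Noetherian local ring $A$ and an $A$-regular element $x$ in its maximal ideal, $\dim A/xA=\dim A-1$. The inequality $\geq$ is Krull's principal ideal theorem. The inequality $\leq$ holds because a regular element lies in no minimal prime. Applied to $A=R_Q$ and $x=T$, whose image is still regular since localization is flat, this gives
\[\Ht_{R/TR}(Q/TR)=\Ht_R(Q)-1 \quad\text{for every prime } Q\supseteq TR.\]
This holds with no catenarity or Cohen--Macaulay hypothesis, which is why the statement needs none.

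For the ``if'' direction, let $P\in\Ass_R(M)$ with $\Ht_R(P)=2$ and $T\in P$. By the height formula above, $\Ht_{R/TR}(P/TR)=1$. Since $P\in\Supp_R(M)$, we have $M_P\neq 0$ and $M_P$ is finitely generated over $R_P$. As $T\in PR_P$, Nakayama's lemma gives $M_P/TM_P\neq 0$. Hence $(M/TM)_{P/TR}\neq 0$ at a height-one prime of $R/TR$, so $M/TM$ is not pseudo-null.

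For the ``only if'' direction, suppose $M/TM$ is not pseudo-null over $R/TR$. Then there is a prime $Q\supseteq TR$ with $\Ht_{R/TR}(Q/TR)\leq 1$ and $M_Q/TM_Q\neq 0$. The height formula gives $\Ht_R(Q)\leq 2$. On the other hand $M_Q\neq 0$, and $M$ is pseudo-null over $R$, so $\Ht_R(Q)\geq 2$; hence $\Ht_R(Q)=2$. Every prime $P\subsetneq Q$ has height at most $1$, so $M_P=0$ by pseudo-nullity. Thus $Q$ is a minimal element of $\Supp_R(M)$. Minimal primes of the support of a finitely generated module over a Noetherian ring are associated primes, so $Q\in\Ass_R(M)$ with $\Ht_R(Q)=2$ and $T\in Q$, as required.

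The only step that needs care is the height identity $\Ht_{R/TR}(Q/TR)=\Ht_R(Q)-1$. A naive chain-counting argument would require $R$ to be catenary, and one must avoid that. I would prove it locally, via $\dim R_Q/TR_Q=\dim R_Q-1$, using only the principal ideal theorem and the fact that $T$ avoids the minimal primes of $R_Q$. Everything else is Nakayama's lemma and standard facts about supports and associated primes.
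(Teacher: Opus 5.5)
Your proof is correct, and it takes a genuinely different and more direct route than the paper.

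The paper proceeds by decomposition:
\begin{itemize}
\item It takes a primary decomposition $0=\bigcap_i Y_i$ and shows that the cokernel $D$ of $M\hookrightarrow\bigoplus_i M/Y_i$ lies in $\D_R^2$. Hence $D[T]$ and $D/TD$ are pseudo-null over $R/TR$, which reduces the problem to the coprimary quotients $M/Y_i$.
\item For one implication it uses a surjection $(R/J_i)^n\twoheadrightarrow M/Y_i$ with $J_i=\Ann_R(M/Y_i)$.
\item For the other it uses the nilpotence of $T$ on $M/Y_i$, together with an induction showing that $M/(Y_i+T^kM)\in\D_R^2$ for all $k$.
\end{itemize}

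You instead test pseudo-nullity of $M/TM$ one height-$\le 1$ prime of $R/TR$ at a time. Your only inputs are:
\begin{itemize}
\item the same height-drop identity, which is exactly Lemma~\ref{quotient} and which you prove the same way, via $\dim R_Q/TR_Q=\dim R_Q-1$;
\item Nakayama's lemma at $P$;
\item the fact that minimal primes of $\Supp_R(M)$ are associated.
\end{itemize}

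Your route is shorter. It also gives a sharper statement: the height-one primes of $R/TR$ in the support of $M/TM$ are precisely the $P/TR$ with $P\in\Ass_R(M)$, $\Ht_R(P)=2$ and $T\in P$.

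The paper's decomposition buys an exact sequence linking $M[T]$, $M/TM$, the pieces $M/Y_i$ and $D$, which it reuses for Proposition~\ref{char}. Your localization viewpoint would handle that as well. For a height-two prime $Q\ni T$ the module $M_Q$ has finite length, and then $0\to M[T]_Q\to M_Q\xrightarrow{T}M_Q\to(M/TM)_Q\to 0$ forces $M[T]_Q$ and $(M/TM)_Q$ to have equal lengths.

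A minor point of attribution: the inequality $\dim A/xA\ge\dim A-1$ is the system-of-parameters consequence of Krull's height theorem, not the principal ideal theorem itself. It is standard either way.
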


We begin with some definitions and lemmas.

\begin{definition}
    Define $\D_R^n$ as the subcategory 
    of the category of finitely generated $R$-modules consisting of modules $M$ such that $\Ht(\Ann_R(M))\geq n+1$.
\begin{equation*}
  \begin{array}{rl}
     \D_R^n:=&\{M|R \text{ module } M \text{ such that } M_P=0 \text{ for all prime ideals } P \text{with } ht(P)\leq n \} \\
         :=&\{M|R \text{ module } M \text{ such that } \Ht(\Ann_R(M))\geq n+1 \}.
    \end{array}  
\end{equation*}
    \end{definition}

We see that an $R$-module $M$ belongs to $\D_R^1$ if and only if $M$ is a pseudo-null module. The following property follows easily from the fact that localization is an exact functor.

\begin{lemma}\label{property}
    Let  $   0\rightarrow M'\rightarrow M\rightarrow M''\rightarrow 0$
   be an exact sequence of $R$-modules. Then $M $ is in $\D_R^n$ if and only if both $M'$ and $M''$ are in $\D_R^n$.
\end{lemma}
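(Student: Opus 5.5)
The plan is to work with the first description of $\D_R^n$, namely the condition $M_P=0$ for every prime $P$ with $\Ht(P)\leq n$, and to use that localization at a prime is exact. Fix a prime $P$ of $R$ with $\Ht(P)\leq n$. Localizing the given sequence at $P$ yields an exact sequence
\[
0\rightarrow M'_P\rightarrow M_P\rightarrow M''_P\rightarrow 0 .
\]
From it I would read off the two implications. If $M_P=0$, then $M'_P$ injects into $0$ and $M''_P$ is a quotient of $0$, so both vanish. Conversely, if $M'_P=0$ and $M''_P=0$, then $M_P$ sits between two zero modules in a short exact sequence, hence $M_P=0$. Since $P$ was an arbitrary prime of height at most $n$, this gives both directions of the lemma.

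Two bookkeeping points remain. First, $\D_R^n$ is defined inside finitely generated modules, so the argument has to stay in that category. With $R$ Noetherian, as in Theorem~\ref{ht=2}, a submodule $M'$ and a quotient $M''$ of a finitely generated $M$ are finitely generated. In the converse direction, $M$ is an extension of finitely generated modules and is therefore finitely generated.

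Second, I would justify that the two descriptions of $\D_R^n$ in the definition agree for finitely generated modules. For finitely generated $M$ one has $\Supp_R(M)=V(\Ann_R(M))$. Hence $M_P=0$ for all primes with $\Ht(P)\leq n$ exactly when no prime of height at most $n$ contains $\Ann_R(M)$. Since the height of an ideal is the minimum height of a prime containing it, this is the same as $\Ht(\Ann_R(M))\geq n+1$, so it does not matter which description is used.

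There is no real obstacle. The only point needing care is this identification of the support with $V(\Ann_R(M))$, which uses finite generation, together with the Noetherian hypothesis that keeps $M'$ finitely generated.
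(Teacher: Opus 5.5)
Your proposal is correct and follows the paper's own argument: the paper simply notes that the lemma follows from exactness of localization, applied to the condition $M_P=0$ for primes of height at most $n$. Your extra checks, that finite generation is preserved (using that $R$ is Noetherian) and that the two descriptions of $\D_R^n$ agree via $\Supp_R(M)=V(\Ann_R(M))$, are sound and fill in details the paper leaves implicit.
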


\begin{lemma}\label{quotient}
    Let $T$ be a regular element of $R$, and let $M$ be an $R$-module. Then 
    \[
    M/TM\in \D_{R/T}^n \quad \text{if and only if} \quad M/TM \in \D_R^{n+1}.
    \]
\end{lemma}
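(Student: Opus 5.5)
The plan is to reduce the statement to a single height comparison between primes of $R$ containing $T$ and the corresponding primes of $R/T$. Throughout, $R$ is Noetherian and $M$ is finitely generated, as in Theorem~\ref{ht=2}. Put $N=M/TM$. Since $TN=0$, we have $T\in \Ann_R(N)$, and the $R/T$-annihilator is
\[
\Ann_{R/T}(N)=\Ann_R(N)/TR .
\]
The primes of $R/T$ containing $\Ann_{R/T}(N)$ are exactly the $Q/TR$ with $Q$ a prime of $R$ containing $\Ann_R(N)$, and every such $Q$ contains $T$. Since the height of an ideal is the minimum of the heights of the primes containing it, it suffices to prove
\[
\Ht_{R/T}(Q/TR)=\Ht_R(Q)-1 \quad\text{for every prime } Q \text{ of } R \text{ with } T\in Q .
\]
Granting this, $\Ht_{R/T}(\Ann_{R/T}(N))=\Ht_R(\Ann_R(N))-1$. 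Hence the condition $\geq n+1$ on the left holds exactly when the condition $\geq n+2$ on the right holds, which is the claim.

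To prove the height formula, localize at $Q$. Write $A=R_Q$, a Noetherian local ring, and $x=T/1$. Then $\Ht_R(Q)=\dim A$ and $\Ht_{R/T}(Q/TR)=\dim A/xA$, so the formula becomes $\dim A/xA=\dim A-1$. The inequality $\dim A/xA\geq \dim A-1$ holds for any element of the maximal ideal. To see it, lift a system of parameters of $A/xA$ and adjoin $x$; this gives $\dim A/xA+1$ elements generating an ideal primary to the maximal ideal, so $\dim A\leq \dim A/xA+1$ by the dimension theory of Noetherian local rings.

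For the reverse inequality $\dim A/xA\leq\dim A-1$ we use that $T$ is regular. Localization is flat, so $x$ remains a nonzerodivisor on $A$. Minimal primes of a Noetherian ring are associated primes, and every element of an associated prime is a zero divisor, so $x$ lies in no minimal prime of $A$. Now take a maximal chain of primes $P_0\subsetneq\cdots\subsetneq P_d$ in $A$ that contain $x$. Since $P_0$ contains $x$, it is not minimal, so the chain extends downward by a minimal prime strictly contained in $P_0$. This gives $\dim A\geq \dim A/xA+1$.

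The only point needing care is this last step. We need $T$ to avoid every minimal prime of $R_Q$, not merely those of $R$, and this is why we use the facts that regularity survives localization and that minimal primes are associated. Everything else is formal manipulation of annihilators and the correspondence between primes of $R/T$ and primes of $R$ containing $T$.
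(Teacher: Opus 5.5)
Your proof is correct and follows the paper's route: both reduce the lemma to the identity $\Ht_R(Q)=\Ht_{R/T}(Q/TR)+1$ for primes $Q$ containing $T$, by localizing at $Q$ and using $\dim R_Q/TR_Q=\dim R_Q-1$. The differences are that you prove this dimension drop directly (lifting a system of parameters for one inequality, and using that $T$ avoids the minimal primes of $R_Q$ for the other) where the paper cites Matsumura, (15.F) Lemma 4, and that you write out the annihilator bookkeeping the paper leaves to the definition.
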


\begin{proof}
    Let $P$ be a prime ideal in $R$ containing $T$, and let $\bar{P}$ be its image in $R/T$. We have $\Ht(P) = \dim(R_P)$ and $\Ht(\bar{P}) = \dim((R/T)_{\bar{P}}) = \dim(R_P/T)$. By \cite[(15.F) Lemma 4]{matsumura1970commutative}, since $T$ is a regular element of $R$, we have $ \dim(R_P/T) = \dim(R_P) - 1$.
   This implies 
    \[
    \Ht_R(P) = \Ht_{R/T}(\bar{P}) + 1.
    \]
    The conclusion now follows from the definition.
\end{proof}

We will frequently use the above two lemmas. Now we begin the proof of Theorem \ref{ht=2}. Let $R$ be a Noetherian ring, and let $M$ be a finitely generated $R$-module. Consider the shortest primary decomposition of the zero submodule in the Noetherian $R$-module $M$:
\[
0 = \bigcap_{i=1}^r Y_i.
\]
In other words, $\Ass_R(M/Y_i) = \{P_i\}$, where $P_i \neq P_j$ for $i \neq j$. By \cite[Lemma 8.E]{matsumura1970commutative}, we have the set of associated primes $
\Ass_R(M) = \{P_1, P_2, \dots, P_r\}$.

\begin{lemma}
    Let $R$ be a Noetherian ring and $M$ be a finitely generated $R$-module. Assume $M\in \D_R^1$ and consider the following short exact sequence.
    \[
    0\rightarrow M\xrightarrow{\phi}\bigoplus_{i=1}^rM/Y_i\rightarrow D\rightarrow 0
    \]
    where $\phi$ is induced by the natural projection and $D$ is the cokernel of the map $\phi$. Then $D$ is in $ \D_R^2$.
\end{lemma}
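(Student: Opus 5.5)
We need $D_P=0$ for every prime $P$ with $\Ht(P)\le 2$. The plan is to localize the exact sequence at such a $P$ and show that $\phi_P$ is surjective. Localization is exact, so
\[
0\rightarrow M_P\rightarrow \bigoplus_{i=1}^r (M/Y_i)_P\rightarrow D_P\rightarrow 0
\]
is exact. The whole argument reduces to understanding which summands $(M/Y_i)_P$ survive, and this is controlled by the associated primes.

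First, since $\Ass_R(M/Y_i)=\{P_i\}$, the support of $M/Y_i$ is $V(P_i)$. Hence $(M/Y_i)_P\neq 0$ if and only if $P_i\subseteq P$. Next, $M\in\D_R^1$ gives $M_{Q}=0$ for every prime $Q$ of height at most one, and each $P_i$ lies in $\Supp M$, so $\Ht(P_i)\geq 2$. Consequently, if $\Ht(P)\le 1$, every summand vanishes, so $D_P=0$.

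Now suppose $\Ht(P)=2$. A prime $P_i\subseteq P$ has height at least $2$, so $P_i=P$. Thus at most one index, say $i_0$, contributes, and $\phi_P$ becomes the natural map $M_P\to (M/Y_{i_0})_P$, which is the quotient map $M_P\to M_P/(Y_{i_0})_P$. This map is surjective, so $D_P=0$. If no $P_i$ equals $P$, the target is zero and again $D_P=0$.

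The only point needing care is the identification of the component maps after localization with the localized projections $M_P\to (M/Y_i)_P$; once it is noted that all other summands vanish, surjectivity follows from exactness of localization. I expect no real obstacle. The key input is that pseudo-nullity forces every associated prime to have height at least $2$, so over a height-two prime at most one primary component survives. This gives $D\in\D_R^2$.
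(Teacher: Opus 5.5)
Your proof is correct, but it takes a different route from the paper's.

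\textbf{Your route (local).} You use $\Supp_R(M/Y_i)=V(P_i)$, which holds because the minimal primes of the support of a finitely generated module are associated. You also use $\Ht(P_i)\ge 2$. Together these show that after localizing at a prime $P$ with $\Ht(P)\le 2$, at most one summand $(M/Y_i)_P$ survives, namely the one with $P_i=P$. So $\phi_P$ is simply the quotient map $M_P\to M_P/(Y_{i_0})_P$, and $D_P=0$. Since $D$ is finitely generated, this also gives the annihilator form of the condition $D\in\D_R^2$.

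\textbf{The paper's route (global).} The paper first checks element by element that $\bigoplus_i(\bigcap_{j\ne i}Y_j+Y_i)/Y_i\subseteq\Ima\phi$. Hence $D$ is a quotient of $\bigoplus_i M/(\bigcap_{j\ne i}Y_j+Y_i)$. It then shows that each $I_i=\Ann_R\bigl(M/(\bigcap_{j\ne i}Y_j+Y_i)\bigr)$ has height at least $3$. The ideal $I_i$ contains both $\Ann_R(M/Y_i)$ and $\bigcap_{j\ne i}\Ann_R(M/Y_j)$. So a height-two prime over $I_i$ would have to equal $P_i$, and by prime avoidance it would then contain some $P_j$ with $j\ne i$, which is impossible.

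\textbf{Comparison.} Both arguments rest on the same two facts: $\Ht(P_i)\ge 2$, and the $P_i$ are distinct, hence pairwise incomparable when one of them has height $2$. Yours replaces the radical and prime-avoidance bookkeeping with exactness of localization. It is shorter and avoids the slightly delicate step where the paper writes $P_i=\sqrt{I_i}$; there one really needs a height-two minimal prime over $I_i$. The paper's version gives something more explicit: a concrete module surjecting onto $D$ whose annihilator visibly has height at least $3$.
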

\begin{proof}
    An element of $\bigoplus_{i=1}^r(\cap_{j\neq i}Y_j+Y_i)/Y_i$ can be written as $(\bar{a}_1,\bar{a}_2,\cdots,\bar{a}_r)$ for elements $a_i\in  \cap_{j\neq i}Y_j$. Notice that 
    \[
    \phi(a_1+a_2+\cdots+a_r)=(\bar{a}_1,\bar{a}_2,\cdots,\bar{a}_r)
    \]
Hence, 
\[
\bigoplus_{i=1}^r(\cap_{j\neq i}Y_j+Y_i)/Y_i\subset \Ima(\phi)
\]
By Lemma \ref{property}, it is enough to show
\[
M/(\cap_{j\neq i}Y_j+Y_i) \in \D^2_R
\]
Notice that $\Ass_R(M/Y_i)=\{P_i\}$ gives us $\sqrt{\Ann_{R}(M/Y_i)}=P_i$ by \cite[Proposition 8.B ]{matsumura1970commutative}. Since $M\in \D_R^1$, we have $M/Y_i\in \D_R^1$, which means $\Ht_R(\Ann_R(M/Y_i))\geq 2$. Hence $\Ht_R(P_i)\geq 2$.

Put $I_i:=\Ann_R(M/(\cap_{j\neq i}Y_j+Y_i))$. Since $\Ann_R(M/Y_i)\subset I_i$,
\[
P_i=\sqrt{\Ann_R(M/Y_i)}\subset \sqrt{I_i}
\]
which tells us $\Ht_R(I_i)\geq 2$. If we assume that $\Ht_R(I_i)=2$, then $P_i=\sqrt{I_i}$ and $\Ht_R(P_i)=2$. Hence, 
\[
\cap_{j\neq i}\Ann_R(M/Y_j)\subset I_i\subset P_i
\]
    By  \cite[Proposition 1.11]{atiyah2018introduction}, 
\[
\Ann_R(M/Y_j)\subset P_i
\]
for some $j\neq i$. We therefore have 
\[
P_j=\sqrt{\Ann_R(M/Y_j)}\subset P_i
\]
This tells us $P_j=P_i$ since $\Ht_R(P_i)=2$ and $\Ht_R(P_j)\geq 2$. This is a contradiction because $P_j\neq P_i$ by definition. Therefore, we have  $\Ht(I_i)\geq 3$ for $1\leq i\leq r$, which implies that $M/(\cap_{j\neq i}Y_j+Y_i) \in \D^2_R$.
\end{proof}

Since $D\in \D_R^2$, by Lemma \ref{property}, we have $D/TD\in \D_R^2$ and $D[T]\in \D_R^2$. By Lemma \ref{quotient}, we have $D/TD\in \D_{R/T}^1$ and $ D[T]\in \D_{R/T}^1$. We have the following exact sequence:
\[
D[T]\rightarrow M/TM\rightarrow\bigoplus_{i=1}^rM/(Y_i+TM)\rightarrow D/TD\rightarrow 0
\]
By Lemma \ref{property}, we know $M/T\in \D_{R/T}^1$ if and only if $M/(Y_i+TM)$ is in $ \D_{R/T}^1$ for all $1\leq i\leq r$.
\begin{lemma}
    With the same setup as above, $M/(Y_i+TM)\not\in \D_{R/T}^1$ if and only if $T\in P_i$ and $\Ht(P_i)=2$.
\end{lemma}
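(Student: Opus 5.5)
We need to show: $N_i:=M/(Y_i+TM)$ fails to lie in $\D^1_{R/T}$ exactly when $T\in P_i$ and $\Ht(P_i)=2$. By Lemma \ref{quotient}, membership $N_i\in\D^1_{R/T}$ is equivalent to $N_i\in\D^2_R$, i.e.\ $\Ht_R(\Ann_R(N_i))\geq 3$. So the plan is to compute the radical of $\Ann_R(N_i)$. Note that $N_i=(M/Y_i)/T(M/Y_i)$, and for a finitely generated module $X$ one has $\Supp(X/TX)=\Supp(X)\cap V(T)$. Thus
\[
\sqrt{\Ann_R(N_i)}=\sqrt{\Ann_R(M/Y_i)+TR}=\sqrt{P_i+TR},
\]
using $\sqrt{\Ann_R(M/Y_i)}=P_i$, which the previous lemma's proof already invoked (\cite[Proposition 8.B]{matsumura1970commutative}). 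Also recall from that proof that $\Ht_R(P_i)\geq 2$ since $M\in\D^1_R$.

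Next we split into cases. If $T\in P_i$, then $\sqrt{P_i+TR}=P_i$, so $\Ht_R(\Ann_R(N_i))=\Ht_R(P_i)$. This is $\ge 3$ iff $\Ht(P_i)\ne 2$, so $N_i\notin\D^2_R$ exactly when $\Ht(P_i)=2$. (We should make sure $N_i\neq 0$ here; this holds because $M/Y_i\neq 0$ is finitely generated and $T\in P_i\subseteq$ some maximal ideal in its support, so Nakayama applied after localizing at $P_i$ gives $(N_i)_{P_i}\ne 0$.) If $T\notin P_i$, then every prime $Q\supseteq P_i+TR$ strictly contains $P_i$, so $\Ht(Q)\geq \Ht(P_i)+1\geq 3$. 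Hence $\Ht_R(\Ann_R(N_i))\geq 3$ (vacuously true if $N_i=0$), so $N_i\in\D^2_R$.

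The only mildly delicate point is the support identity $\Supp(X/TX)=\Supp(X)\cap V(T)$, which follows from Nakayama's lemma at each localization $X_Q$ with $T\in Q$, together with the fact that $\Ht(\Ann)$ depends only on the radical, i.e.\ on minimal primes of the support. The strict inequality $\Ht(Q)>\Ht(P_i)$ for $Q\supsetneq P_i$ is immediate from the definition of height. No property of $T$ beyond regularity (used only through Lemma \ref{quotient}) is needed.
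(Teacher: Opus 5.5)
Your proof is correct. It differs from the paper's mainly in the converse direction.

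You compute the radical exactly, $\sqrt{\Ann_R(N_i)}=\sqrt{P_i+TR}$, using $\Supp(X/TX)=\Supp(X)\cap V(T)$, which rests on Nakayama's lemma. Both implications then follow from a single height computation.

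The paper uses only the easy inclusion $J_i+TR\subseteq\Ann_R(N_i)$, coming from the surjection $(R/(J_i+TR))^n\twoheadrightarrow N_i$, for the forward direction. For the converse it avoids Nakayama altogether. It assumes $N_i\in\D^2_R$ and uses the exact sequences $M/(Y_i+T^kM)\xrightarrow{\times T}M/(Y_i+T^{k+1}M)\to M/(Y_i+TM)\to 0$ together with closure of $\D^2_R$ under extensions. This gives $M/(Y_i+T^kM)\in\D^2_R$ for every $k$. Since $T\in P_i=\sqrt{J_i}$, some power $T^n$ kills $M/Y_i$, so $M/Y_i\in\D^2_R$, contradicting $\Ht(P_i)=2$.

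Your route is shorter and makes the role of $\sqrt{P_i+TR}$ transparent. The paper's d\'evissage uses only the formal properties in Lemma~\ref{property} and Lemma~\ref{quotient}. That is the template it later carries over to the noncommutative categories $\C^n_R$, where no support theory is available.

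Your separate Nakayama check that $N_i\neq 0$ when $T\in P_i$ is redundant. It already follows from $P_i\in\Supp(M/Y_i)\cap V(T)=\Supp(N_i)$.
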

\begin{proof}
    Assume $M/(Y_i+TM)\not\in \D_{R/T}^1$. Then $M/Y_i$ is not in $\D_R^2$ by Lemma \ref{property} and Lemma \ref{quotient}. Since $M/Y_i$ is a finitely generated $R$-module, there exists a surjective map 
    \[
    (R/J_i)^n\twoheadrightarrow M/Y_i
    \]
    where $J_i=\Ann_R(M/Y_i)$. We have 
    \[
     (R/(J_i+TR))^n\twoheadrightarrow M/(Y_i+TM)
    \]
    Hence $R/(J_i+TR)\not\in\D_{R/T}^1$ by Lemma \ref{property}. 

    If $\Ht_R(J_i+TR)\geq 3$, then $R/(J_i+TR)\in \D_R^2$, which implies $R/(J_i+TR)\in \D_{R/T}^1 $ by Lemma \ref{quotient}. Contradiction! Therefore, $\Ht_R(J_i+TR)\leq 2$. 
    \[
    P_i\subset P_i+TR=\sqrt{J_i}+TR\subset \sqrt{J_i+TR}
    \]
    Since $\Ht_R(P_i)\geq 2$, we have $P_i=P_i+TR=\sqrt{J_i+TR}$ and $\Ht_R(P_i)=2$. Hence,  $T\in P_i$ and $\Ht(P_i)=2$.

    Conversely, assume $T\in P_i$ and $\Ht(P_i)=2$. Then $T^n\in \Ann_R(M/Y_i)$  for some integer $n$ since $P_i=\sqrt{\Ann_R(M/Y_i)}$. Assume $M/(Y_i+TM)\in \D_{R/T}^1$. Then $M/(Y_i+TM)\in \D_{R}^2$ by Lemma \ref{quotient}. We have the following exact sequence: 
    \[
    M/(Y_i+TM)\xrightarrow{\times T}M/(Y_i+T^2M)\rightarrow M/(Y_i+TM)\rightarrow 0
    \]
    By Lemma \ref{property}, we have $M/(Y_i+T^2M)\in \D_{R}^2$. Continuing the same process, we have $M/(Y_i+T^kM)\in \D_{R}^2$ for any integer $k\geq 1$. In particular, $M/Y_i=M/(Y_i+T^nM)\in \D_{R}^2$. This contradicts the fact that $\Ht_R(P_i)=2$.   
\end{proof}
\section{Characteristic ideal}

If $R$ is a Krull domain, we can define the characteristic ideal of a finitely generated $R$-module $M$. 
Recall that if $R$ is a Krull domain, then for any finitely generated torsion $R$-module $M$, there exists a short exact sequence:
\[
0\rightarrow A\rightarrow M\rightarrow \bigoplus_{i=1}^n R/\p_i^{e_i} \rightarrow B\rightarrow 0,
\]
where $A$ and $B$ are pseudo-null $R$-modules and $\p_i$ is a prime ideal of height 1. The $R$-module 
\[
E(M) := \bigoplus_{i=1}^n R/\p_i^{e_i}
\]
is called the elementary module attached to $M$. The characteristic ideal of $M$ is then defined as
\[
\ch_R(M) := \prod_{i=1}^n \p_i^{e_i}.
\]
If $M$ is a finitely generated $R$-module but not torsion, we set $\ch_R(M) := 0$. Hence, a finitely generated $R$-module $M$ is pseudo-null if and only if $\ch_R(M) = R$. If $M',M,M''$ are finitely generated $R$-modules and 
\[
0\rightarrow M'\rightarrow M\rightarrow M''\rightarrow 0,
\]
then we have $\ch_R(M)=\ch_R(M')\ch_R(M'')$.

There is another way to describe the characteristic ideal of a finitely generated torsion $R$-module $M$:
\[
\ch_R(M) = \prod_{\p} \p^{l_{\p}(M\otimes_R R_{\p})},
\]
where the product runs over all prime ideals $\p$ of height 1, and $l_{\p}(M\otimes_R R_{\p})$ denotes the length of the $R_{\p}$-module $M\otimes_R R_{\p}$. 

\begin{proposition}\label{char}
  Let $R$ and $R/T$ both be Noetherian Krull domains. Let $M$ be a finitely generated pseudo-null $R$-module. Then 
    \[
    \ch_{R/T}(M/T) = \ch_{R/T}(M[T]).
    \]
\end{proposition}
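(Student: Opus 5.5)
We use the four-term exact sequence coming from multiplication by $T$ on $M$, and compare lengths at height-one primes of $R/T$. Multiplication by $T$ gives
\[
0\rightarrow M[T]\rightarrow M\xrightarrow{\times T} M\rightarrow M/TM\rightarrow 0,
\]
and both $M[T]$ and $M/TM$ are killed by $T$, so they are finitely generated $R/T$-modules. We must show that for every height-one prime $\bar{\p}$ of $R/T$ the lengths $l_{\bar\p}((M/TM)_{\bar\p})$ and $l_{\bar\p}(M[T]_{\bar\p})$ agree, with the understanding that both sides are $0$ (non-torsion) simultaneously. By the length description of $\ch_{R/T}$ this gives the equality of characteristic ideals.

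Fix such a $\bar{\p}$ and let $\p\supset TR$ be its preimage in $R$. By the height computation in the proof of Lemma \ref{quotient}, $\Ht_R(\p)=2$. Localize the displayed sequence at $\p$. Since $M$ is pseudo-null, $\Ht_R(\Ann_R(M))\geq 2$, so $M_\p$ is supported only at the maximal ideal of $R_\p$, which has dimension $2$. Hence $M_\p$ has finite length over $R_\p$. Then $(M/TM)_\p$ and $M[T]_\p$, being a quotient and a submodule of $M_\p$, also have finite length over $R_\p$.

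Additivity of length in the localized four-term sequence gives
\[
l_{R_\p}(M[T]_\p)-l_{R_\p}(M_\p)+l_{R_\p}(M_\p)-l_{R_\p}((M/TM)_\p)=0,
\]
so $l_{R_\p}(M[T]_\p)=l_{R_\p}((M/TM)_\p)$. Both modules are annihilated by $T$, and $(R/T)_{\bar\p}=R_\p/TR_\p$ has the same residue field as $R_\p$. Therefore length over $R_\p$ equals length over $(R/T)_{\bar\p}$ for these modules. This shows $(M/TM)_{\bar\p}$ and $M[T]_{\bar\p}$ have equal finite length at every height-one $\bar\p$.

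In particular both modules are torsion over $R/T$: a finitely generated module over a domain is torsion iff its localization at the generic point vanishes, and this follows from finiteness of length at a height-one prime, which always exists unless $R/T$ is a field. So neither side is defined as $0$ by the non-torsion convention. Multiplying $\bar\p^{\,l}$ over all height-one $\bar\p$ gives $\ch_{R/T}(M/TM)=\ch_{R/T}(M[T])$.

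The main point to check carefully is the finite-length step: it uses $\Ht_R(\p)=2$ exactly. This needs $T$ regular and the dimension formula of \cite[(15.F) Lemma 4]{matsumura1970commutative}, as already used in Lemma \ref{quotient}. The degenerate case where $R/T$ is a field, with no height-one primes, is trivial because both characteristic ideals are then the unit ideal.
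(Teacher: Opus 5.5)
Your proof is correct, but it is organized differently from the paper's.

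The paper first takes the reduced primary decomposition $0=\bigcap Y_i$ and the sequence $0\to M\to\bigoplus_i M/Y_i\to D\to 0$ with $D\in\D_R^2$. Applying the snake lemma to multiplication by $T$ gives a six-term sequence in which $D[T]$ and $D/TD$ are pseudo-null over $R/T$. This reduces the statement to modules $N=M/Y_i$ with a single associated prime. That case (Lemma \ref{ass}) is handled by exactly your computation: localize $0\to N[T]\to N\to N\to N/TN\to 0$ at a height-two prime $Q\ni T$, observe finite length, and compare lengths over $R_Q$ and $(R/T)_{\bar Q}$.

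You notice that the reduction is unnecessary. Pseudo-nullity of $M$ alone already forces $M_\p$ to have finite length at every height-two $\p$, so the length comparison runs on $M$ directly. This needs no primary decomposition and no appeal to Theorem \ref{ht=2}.

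The paper's detour does record which primes can occur: only images $\pi(P)$ of height-two $P\in\Ass_R(M)$ with $T\in P$, which ties the result to Theorem \ref{ht=2}. Your route recovers this as well. If $M_\p\neq 0$ for a height-two $\p$, then $\p$ is minimal in $\Supp_R(M)$, hence lies in $\Ass_R(M)$.

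One small tightening concerns torsionness. It follows most cleanly by noting that the generic point of $R/T$ corresponds to the height-one prime $TR$, and $M_{TR}=0$. Hence $(M/TM)_{TR}=(M[T])_{TR}=0$. This also covers the degenerate case where $R/T$ is a field. There it shows $M/TM=M[T]=0$, which is what your claim that both characteristic ideals are the unit ideal actually requires.
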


This result can be viewed as a generalization of \cite[Prop 2.7]{bandini2013characteristic}. They prove it only for $R$ of the form $R=A[[T]]$, where $A$ is a Noetherian Krull domain. Our method is different and uses Theorem \ref{ht=2}. Before proving the proposition, we first establish the following lemma. Throughout this section, we assume that $R$ and $R/T$ are both Noetherian Krull domains so that characteristic ideals are well-defined. Let $\pi: R \rightarrow R/T$ be the quotient map.

\begin{lemma}\label{ass}
    Let $N$ be a finitely generated pseudo-null $R$-module and assume that $\Ass_R(N) = \{P\}$. 
    
    If $T \in P$ and $\Ht(P) = 2$, then 
    \[
    \ch_{R/T}(N/T) = \ch_{R/T}(N[T]) = \pi(P)^{l_{P}((N/T) \otimes_R R_{P})}.
    \]
    
    Otherwise, $\ch_{R/T}(N/T) = \ch_{R/T}(N[T]) = R/T$.
\end{lemma}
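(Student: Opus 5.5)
Since $N$ is pseudo-null and $\Ass_R(N)=\{P\}$, we have $\sqrt{\Ann_R(N)}=P$ and $\Ht_R(P)\geq 2$. The plan rests on one exact sequence, obtained from multiplication by $T$:
\[
0\rightarrow N[T]\rightarrow N\xrightarrow{\times T} N\rightarrow N/TN\rightarrow 0 .
\]
Both outer terms are killed by $T$, so they are $R/T$-modules. I will compute their lengths at height-one primes $\bar Q$ of $R/T$. Such a $\bar Q$ corresponds to a prime $Q\ni T$ of $R$ with $\Ht_R(Q)=2$, by the height formula in the proof of Lemma \ref{quotient}. Moreover $(R/T)_{\bar Q}=R_Q/TR_Q$, and for an $R/T$-module $X$ the length of $X_{\bar Q}$ over $(R/T)_{\bar Q}$ equals the length of $X_Q$ over $R_Q$, since the residue fields agree.

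\textbf{Case where $T\notin P$ or $\Ht(P)\geq 3$.} I split the otherwise case into two subcases.
\begin{itemize}
\item If $\Ht(P)\geq 3$, then $N\in\D^2_R$. By Lemma \ref{property}, $N/TN$ and $N[T]$ lie in $\D^2_R$, hence in $\D^1_{R/T}$ by Lemma \ref{quotient}. Both characteristic ideals are therefore $R/T$.
\item If $T\notin P$, then $T$ is a nonzerodivisor on $N$, because the zero divisors on $N$ are the union of the associated primes. Hence $N[T]=0$. Also $T+\Ann_R(N)$ has radical strictly containing $P$, so its height is at least $3$. Thus $N/TN\in\D^2_R$, and again $\ch_{R/T}(N/TN)=R/T$.
\end{itemize}
Alternatively, this whole case follows from Theorem \ref{ht=2} together with the fact that a submodule of a pseudo-null module is pseudo-null. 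I will use the direct argument above.

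\textbf{Case $T\in P$ and $\Ht(P)=2$.} For a height-two prime $Q\neq P$ containing $T$, we have $N_Q=0$, since $P\not\subset Q$ (both have height two). So both localizations vanish at $Q$. The only contribution comes from $\bar P=\pi(P)$, and it remains to show
\[
l_{R_P}\bigl(N[T]_P\bigr)=l_{R_P}\bigl((N/TN)_P\bigr).
\]
At $P$, the module $N_P$ is supported only at the maximal ideal of $R_P$ (its only associated prime is $PR_P$), so $N_P$ has finite length. Localizing the four-term sequence gives
\[
0\to N[T]_P\to N_P\xrightarrow{T} N_P\to (N/TN)_P\to 0 .
\]
Additivity of length on this sequence of finite-length modules forces the two outer lengths to be equal. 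This yields
\[
\ch_{R/T}(N/T)=\ch_{R/T}(N[T])=\pi(P)^{l_P((N/T)\otimes R_P)} .
\]

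\textbf{Main obstacle.} The delicate point is identifying the height-one primes of $R/T$ with the height-two primes of $R$ containing $T$, and matching the lengths over $(R/T)_{\bar P}$ with those over $R_P$. The former uses the height relation from the proof of Lemma \ref{quotient}. The latter holds because both modules are annihilated by $T$ and the residue fields agree. Everything else is formal.
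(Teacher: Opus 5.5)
Your proof is correct and follows the paper's argument: localize $0\to N[T]\to N\xrightarrow{T}N\to N/TN\to 0$ at the relevant height-two prime, use that $N_P$ has finite length, and match lengths over $R_P$ and $(R/T)_{\bar P}$. The only difference is that the paper treats the ``otherwise'' case uniformly: any height-two $Q\ni T$ with $N_Q\neq 0$ must contain $P$, which forces $Q=P$, $\Ht P=2$ and $T\in P$. Your split into the subcases $\Ht P\geq 3$ and $T\notin P$, via Lemmas \ref{property} and \ref{quotient}, is valid but not needed.
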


\begin{proof}
    Let $\bar{Q}$ be a prime ideal of height 1 in $R/T$, and let $Q = \pi^{-1}(\bar{Q})$ be the preimage of $\bar{Q}$. Then $Q$ is a prime ideal in $R$ of height 2. Consider the short exact sequence:
    \[
    0 \rightarrow N[T] \rightarrow N \xrightarrow{\times T} N \rightarrow N/TN \rightarrow 0.
    \]
    Taking the localization at the prime ideal $Q$, we obtain:
    \[
    0 \rightarrow N[T] \otimes_R R_Q \rightarrow N \otimes_R R_Q \xrightarrow{\times T} N \otimes_R R_Q \rightarrow N/TN \otimes_R R_Q \rightarrow 0.
    \]
    We have $N \otimes_R R_Q \neq 0$ if and only if $\Ann_R(N) \subseteq Q$. Since $P = \sqrt{\Ann_R(N)} \subseteq Q$ and $\Ht_R(P) \geq 2$ while $\Ht_R(Q) = 2$, it follows that $\Ht_R(P) = 2$ and $T \in Q = P$. 

    Since $N$ is finitely generated, there exists an integer $n$ such that $Q^n \subseteq \Ann_R(N)$. Thus, $N \otimes_R R_Q$ can be viewed as a module over the Artinian ring $R_Q/Q^n$, which implies that the length $l_Q(N \otimes_R R_Q)$ is finite. Consequently, the first and last terms in the exact sequence have the same finite length as $R_Q$-modules. 

    The conclusion follows from the following observation: If an $R$-module $M$ is annihilated by $T$, then 
    \[
    M \otimes_R R_Q = M \otimes_{R/T} (R/T \otimes_R R_Q) = M \otimes_{R/T} (R/T)_{\bar{Q}}.
    \]
    Any $R_Q$-submodule of $M \otimes_R R_Q$ is also an $(R/T)_{\bar{Q}}$-module. Hence, the length of $M \otimes_R R_Q$ as an $R_Q$-module is the same as the length of $M \otimes_{R/T} (R/T)_{\bar{Q}}$ as an $(R/T)_{\bar{Q}}$-module.
\end{proof}

\begin{proof}[Proof of Proposition \ref{char}]
    Recall that we have the following exact sequence:
    \[
    0 \rightarrow M[T] \rightarrow \bigoplus_i (M/Y_i)[T] \rightarrow D[T] \rightarrow M/T \rightarrow \bigoplus_i M/(Y_i+TM) \rightarrow D/TD \rightarrow 0.
    \]

    We know that $D/TD$ and $D[T]$ are pseudo-null as $R/T$-modules. By Lemma \ref{ass}, we also have 
    \[
    \ch_{R/T}((M/Y_i)[T]) = \ch_{R/T}(M/(Y_i+TM)).
    \]
    Hence, it follows that 
    \[
    \ch_{R/T}(M[T]) = \ch_{R/T}(M/TM).
    \]
\end{proof}

 Let $R$ and $R/T$ both be Noetherian Krull domains. Let $M$ be a finitely generated torsion $R$-module.
Define \[
\Delta_{R/(T)}(M)= \frac{ \ch_{R/T}(M/TM)}{\ch_{R/T}(M[T]) } .
\]
 Here the quotient is understood formally, i.e. by subtracting the exponents at each height-one prime. By Proposition \ref{char}, we see that $\Delta_{R/(T)}(M)=1$ when $M$ is pseudo-null over $R$. Let $0\to M'\to M\to M''\to 0$ be an exact sequence of finitely generated torsion modules over $R$. Applying multiplication by $T$ and the snake lemma, we have 
\[
0\to M'[T]\to M[T]\to M''[T]\to M'/TM'\to M/TM\to M''/TM''\to 0.
\]
By the multiplicative property of the characteristic ideal, we have 
\[
\Delta_{R/(T)}(M)=\Delta_{R/(T)}(M')\Delta_{R/(T)}(M'').
\]

Put $S=R/TR$ and $\pi: R\to S$. Let $J$ be an ideal of $R$ such that $\pi(J)=(J+TR)/TR\neq 0$. We define 
\[
\pi^{div}(J):=\prod_{\Ht_S(\mathfrak{q})=1}\mathfrak{q}^{l_\mathfrak{q}(S_\mathfrak{q}/\pi(J)S_\mathfrak{q})},
\]
where $l_\mathfrak{q}(S_\mathfrak{q}/\pi(J)S_\mathfrak{q})$ is the length of $S_\mathfrak{q}/\pi(J)S_\mathfrak{q}$ as a $ S_\mathfrak{q}$-module. By definition, we have $\ch_S(S/\pi(J))=\pi^{div}(J)$. 
\begin{proposition}
    Let $R$ and $S:=R/(T)$ both be Noetherian Krull domains. Let $M$ be a finitely generated torsion $R$-module, and assume that $M/TM$ is torsion over $R/TR$. Then 
    \[
  \ch_{R/T}(M[T])  \pi^{div}(\ch_R(M)) = \ch_{R/T}(M/TM). 
    \]
 
\end{proposition}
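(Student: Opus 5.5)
Both sides of the identity are multiplicative in short exact sequences of finitely generated torsion $R$-modules. For the quotient $\Delta_{R/(T)}$ this was shown above. For $\pi^{div}(\ch_R(-))$ it holds because $\ch_R$ is multiplicative and, on the relevant ideals, $\pi^{div}$ turns products into products. So I rewrite the claim as
\[
\Delta_{R/(T)}(M)=\pi^{div}(\ch_R(M)),
\]
and plan a dévissage: reduce to elementary modules, then compute.

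\textbf{Step 1: reduce to $E(M)$.} Take the structure sequence $0\to A\to M\to E(M)\to B\to 0$ with $A,B$ pseudo-null, and split it into two short exact sequences through the image $M_0$. By Proposition \ref{char}, $\Delta_{R/(T)}(A)=\Delta_{R/(T)}(B)=1$. Multiplicativity then gives $\Delta_{R/(T)}(M)=\Delta_{R/(T)}(E(M))$. Also $\ch_R(M)=\ch_R(E(M))$ by definition.

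One point needs care. Multiplicativity of $\Delta_{R/(T)}$ only makes sense when all the relevant $N/TN$ are $R/T$-torsion. For $A$ and $B$ this holds by Lemma \ref{quotient}, since they are pseudo-null. For $M_0$ and $E(M)$ I get it from the snake-lemma six-term sequence together with the hypothesis that $M/TM$ is torsion, comparing ranks at the generic point of $R/T$. The conclusion is that no $\p_i$ contains $T$, i.e.\ $\p_i\neq \pi^{-1}(0)$, which can only fail when $TR$ itself is a $\p_i$.

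\textbf{Step 2: elementary pieces.} Again by multiplicativity, it suffices to treat $N=R/\p^{e}$ with $\p$ of height one and $T\notin\p$. Since $R$ is a Krull domain, $\p^{e}$ should be read as the divisorial (symbolic) power $\p^{(e)}$. Its associated primes are only $\p$, so $T$ is a nonzerodivisor on $R/\p^{(e)}$ and $N[T]=0$. Hence
\[
\Delta_{R/(T)}(N)=\ch_{R/T}\bigl(R/(\p^{(e)}+TR)\bigr)=\ch_S\bigl(S/\pi(\p^{(e)})\bigr)=\pi^{div}(\p^{(e)})
\]
by the definition of $\pi^{div}$, and this is $\pi^{div}(\ch_R(N))$.

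\textbf{Main obstacle.} It remains to check that $\pi^{div}$ is compatible with the products of divisorial ideals used to define $\ch_R(E(M))$, i.e.\ $\pi^{div}(\prod \p_i^{(e_i)})=\prod\pi^{div}(\p_i^{(e_i)})$. Here the ideal product and the divisorial product may differ by something whose image has height $\geq 2$ in $S$, which $\pi^{div}$ ignores. My first approach is to avoid ideal products altogether. The filtration argument already yields the formula for $\bigoplus R/\p_i^{(e_i)}$ summand by summand, with the right-hand side equal to $\prod_i \pi^{div}(\p_i^{(e_i)})$. So I only need $\pi^{div}(\ch_R(M))$ to mean this product. To prove that, I would localize at each height-one $\mathfrak q$ of $S$ and show that the length of $S_\mathfrak q/\pi(I)$ is additive for divisorial ideals $I=\p^{(a)}\mathfrak a$. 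I would do this with the exact sequence $0\to \mathfrak a/I\to R/I\to R/\mathfrak a\to 0$ tensored with $R/T$, noting that its $\Tor_1$ term is $(R/\mathfrak a)[T]=0$.
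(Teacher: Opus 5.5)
Your overall route is the paper's. You pass from $M$ to $E(M)$, using Proposition~\ref{char} to discard the pseudo-null terms $A,B$. Then you compute $\Delta_{R/(T)}$ on elementary summands, where $T\notin\p_i$ kills the $T$-torsion term. Your Step~2 is cleaner than the paper's. The paper filters $R/\p_i^{e_i}$ by pieces it claims are isomorphic to $R/\p_i$, which is not literally true for a non-principal $\p_i$ in a Krull domain. Working with $\p^{(e)}$, on which $T$ is regular, gives $\Delta_{R/(T)}(R/\p^{(e)})=\pi^{div}(\p^{(e)})$ directly. Your bookkeeping on torsion-ness of the various $N/TN$ is also correct.

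The gap is in the step you flag yourself. The paper uses the compatibility $\pi^{div}(\ch_R(M))=\prod_i\pi^{div}(\p_i^{e_i})$ without comment. You try to prove a divisorial version, but your sketch does not close. Tensoring $0\to\mathfrak a/I\to R/I\to R/\mathfrak a\to 0$ with $R/TR$, using $(R/\mathfrak a)[T]=0$, only gives
$\pi^{div}(I)=\ch_S\bigl((\mathfrak a/I)\otimes_R R/TR\bigr)\,\pi^{div}(\mathfrak a)$.
You still need $\ch_S\bigl((\mathfrak a/I)\otimes_R R/TR\bigr)=\pi^{div}(\p^{(a)})$, and you give no argument for it.

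Similarly, your assertion that the ideal product and the divisorial product differ by something whose image has height $\geq 2$ in $S$ is unproved. It does not follow formally from that difference being pseudo-null: Theorem~\ref{ht=2} says precisely that a pseudo-null module can acquire height-one support modulo $T$. Your worry is also aimed at the wrong product. For ordinary ideal products the compatibility is immediate, since $\pi(AB)=\pi(A)\pi(B)$ and colengths of nonzero ideals add in the DVR $S_{\mathfrak q}$.

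The missing idea is the following. $\pi^{div}(J)$ depends only on the localizations $JR_Q$, where $Q=\pi^{-1}(\mathfrak q)$ and $\mathfrak q$ is a height-one prime of $S$. Because $S$ is Krull, $R_Q/TR_Q=S_{\mathfrak q}$ is a DVR. Hence $R_Q$ is a two-dimensional regular local ring, and therefore a UFD. Each $\p_iR_Q$ is principal, say $(f_i)$, so $\p_i^{(e_i)}R_Q=\p_i^{e_i}R_Q=(f_i^{e_i})$. The ordinary product, the divisorial product, and $\bigcap_i\p_i^{(e_i)}$ therefore all localize to $\bigl(\prod_i f_i^{e_i}\bigr)$. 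So every reading of $\ch_R(M)$ has the same $\pi^{div}$, namely $\prod_i\pi^{div}(\p_i^{(e_i)})$.

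Alternatively, you can finish your own exact sequence as follows. Since $I$ is divisorial with no associated prime containing $T$, we have $(\mathfrak a/I)[T]\subseteq(R/I)[T]=0$. Also $E(\mathfrak a/I)=R/\p^{(a)}$. Your Step~1 then gives $\ch_S\bigl((\mathfrak a/I)\otimes_R R/TR\bigr)=\Delta_{R/(T)}(\mathfrak a/I)=\pi^{div}(\p^{(a)})$.
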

The proposition can be viewed as a generalization of \cite[Prop 2.10 ]{bandini2013characteristic}, which proves the corresponding formula in the special case $R=S[[T]]$.
\begin{proof}
  Since $M/TM$ is an $R/T$-torsion module, we have $M_{(T)}=0$ by Nakayama's lemma and  $\operatorname{Char}_R(M) \nsubseteq (T)$. Consider $M$ together with its elementary module:
  \[
0\rightarrow A\rightarrow M\rightarrow E(M) :=\bigoplus_{i=1}^n R/\p_i^{e_i} \rightarrow B\rightarrow 0,
\]
where $A$ and $B$ are pseudo-null $R$-modules. Hence, $\Delta_{R/(T)}(M)=\Delta_{R/(T)}(E(M))= \prod_{i=1}^n \Delta_{R/(T)}(R/\p_i^{e_i})$. Take  a prime filtration of $R/\p_i^{e_i}$,
\[
0\subset N_0\subset N_1\subset N_2\subset \cdots\subset R/\p_i^{e_i}.
\]
Since each quotient factor $N_{i+1}/N_{i}$ is isomorphic to $R/\p_i$, we have $\Delta_{R/(T)}(R/\p_i^{e_i})=\Delta_{R/(T)}(R/\p_i)^{e_i} $. 

Since $M$ is an $S = R/T$-torsion module, we have $T \notin \mathfrak{p}_i$, and hence $(R/\mathfrak{p}_i)[T] = 0$. By definition,
$\operatorname{Char}_S(R/\mathfrak{p}_i + TR) = \pi^{\mathrm{div}}(\mathfrak{p}_i).$
Hence, $\Delta_{R/(T)}(E(M)) = \pi^{\mathrm{div}}(\operatorname{Char}_R(M))$.
   The conclusion follows by the definition of $\Delta_{R/(T)}$.
\end{proof}

\subsection{Application to number theory}
Here we mention a trick that is used in Iwasawa theory to illustrate the importance of Theorem \ref{ht=2}.

Let $f, g$ be two coprime polynomials in $\Lambda_d$. Then $\Lambda_d/(f,g)$ is pseudo-null over $\Lambda_d$. In the short exact sequence of finitely generated $\Lambda_d$-modules,  \[0\to A\to \Lambda_d/(f,g)\to B \to 0,\] the modules $A$ and $B$ are also pseudo-null $\Lambda_d$-modules. We may specialize to $\Lambda_{d-1}=\Lambda_d/(T_d)$. Let $\bar{f}, \bar{g}$ be the images of $f,g$ in $\Lambda_{d-1}$. We have the following exact sequence: 
\[
0\to D\to A/T_dA\to \Lambda_{d-1}/(\bar{f},\bar{g})\to B/T_dB\to 0,
\]
where $D=\im(B[T_d]\to A/T_dA)$. Hence,
\begin{equation}\label{eq char}
    \ch_{\Lambda_{d-1}}(A/T_dA)\ch_{\Lambda_{d-1}}( B/T_dB)=\ch_{\Lambda_{d-1}}(D)(\gcd(\bar{f},\bar{g})). 
\end{equation}
\begin{claim}
     $\gcd(\bar{f},\bar{g})$ is a unit if and only if $A/T_dA$ and $ B/T_dB$ are pseudo-null over $\Lambda_{d-1}$.
\end{claim}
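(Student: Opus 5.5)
The claim follows from equation \eqref{eq char} together with one extra observation: $D$ is pseudo-null over $\Lambda_{d-1}$. I would establish that observation first and then read both implications off the equation.

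\emph{Step 1: $D$ is pseudo-null.} Since $B$ is pseudo-null over $\Lambda_d$, so is its submodule $B[T_d]$, by Lemma~\ref{property}. It is annihilated by $T_d$, so it is a $\Lambda_{d-1}$-module. Its annihilator in $\Lambda_d$ has height at least $2$ and contains $T_d$. By itself this gives only height $\ge 1$ for the image in $\Lambda_{d-1}$, so pseudo-nullity of $B[T_d]$ over $\Lambda_{d-1}$ is not automatic. Proposition~\ref{char}, applied to $B$ with $R=\Lambda_d$ and $T=T_d$ (both rings are regular, hence Noetherian Krull domains), gives
\[
\ch_{\Lambda_{d-1}}(B[T_d])=\ch_{\Lambda_{d-1}}(B/T_dB).
\]
Since $D$ is a quotient of $B[T_d]$, this yields the divisibility
\[
\ch_{\Lambda_{d-1}}(D)\mid \ch_{\Lambda_{d-1}}(B/T_dB).
\]
That is enough for the argument, and $D$ need not be pseudo-null in general. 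So I would replace Step~1 by this divisibility.

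\emph{Step 2: the converse direction.} Assume $A/T_dA$ and $B/T_dB$ are pseudo-null. Then the left side of \eqref{eq char} is the unit ideal. Hence $(\gcd(\bar f,\bar g))$ is the unit ideal, so $\gcd(\bar f,\bar g)$ is a unit. (If $\bar f=\bar g=0$, the middle module is not torsion and the characteristic ideal is $0$, which is consistent with this.)

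\emph{Step 3: the forward direction.} Assume $\gcd(\bar f,\bar g)$ is a unit. Then \eqref{eq char} reads
\[
\ch(A/T_dA)\,\ch(B/T_dB)=\ch(D).
\]
Combining with the divisibility $\ch(D)\mid\ch(B/T_dB)$ from Step~1 gives
\[
\ch(A/T_dA)\,\ch(B/T_dB)\mid \ch(B/T_dB).
\]

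\emph{Step 4: cancellation.} To cancel $\ch(B/T_dB)$ we need it to be nonzero. By Proposition~\ref{char} it equals $\ch(B[T_d])$, and $B[T_d]$ is torsion over $\Lambda_{d-1}$ because it embeds in $B$, which is pseudo-null over $\Lambda_d$. Cancelling in the UFD $\Lambda_{d-1}$ gives $\ch(A/T_dA)=(1)$.

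\emph{Step 5: conclude for $B/T_dB$.} The equality now reads $\ch(B/T_dB)=\ch(D)$. To show $B/T_dB$ is pseudo-null I would use Theorem~\ref{ht=2} directly. The module $M=\Lambda_d/(f,g)$ has $\Lambda_{d-1}/(\bar f,\bar g)$ as its quotient by $T_d$, and this quotient is pseudo-null exactly when $\gcd(\bar f,\bar g)$ is a unit. So Theorem~\ref{ht=2} says no height-two associated prime of $M$ contains $T_d$. The associated primes of $B$ are not directly controlled by this; a cleaner route is to check that every $\Ass(B)$ arises here, but that is not guaranteed.

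\emph{Main obstacle.} The difficulty is pinning down exactly what is claimed for arbitrary $A,B$. I would therefore settle on the following argument. The image of $A/T_dA$ in $\Lambda_{d-1}/(\bar f,\bar g)$ is pseudo-null. Then $A/T_dA$ is an extension of pseudo-null modules once $D$ is pseudo-null, which is exactly where Step~1 is needed. I expect that showing $D$ is pseudo-null, possibly via Theorem~\ref{ht=2} applied to $B$ and the height-two primes containing $T_d$, is the main obstacle.
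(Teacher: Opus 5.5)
Your converse direction (Step~2) is correct. In the forward direction, Steps~3--4 are also correct: the divisibility $\ch_{\Lambda_{d-1}}(D)\mid\ch_{\Lambda_{d-1}}(B/T_dB)$ from Proposition~\ref{char}, together with the nonvanishing of $\ch_{\Lambda_{d-1}}(B/T_dB)$, does let you cancel and obtain $\ch_{\Lambda_{d-1}}(A/T_dA)=(1)$.

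\textbf{The gap.} The forward direction is never finished. After Step~4 you only have $\ch_{\Lambda_{d-1}}(B/T_dB)=\ch_{\Lambda_{d-1}}(D)$, and you never show this ideal is $(1)$. Your Step~5 and the ``main obstacle'' paragraph circle through Theorem~\ref{ht=2}, the associated primes of $B$, and the pseudo-nullity of $D$ without resolving anything. As written, the proposal does not prove that $B/T_dB$ is pseudo-null.

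\textbf{The missing observation.} You already state half of it in Step~5. In the sequence
\[
0\to D\to A/T_dA\to\Lambda_{d-1}/(\bar f,\bar g)\to B/T_dB\to 0,
\]
the module $B/T_dB$ is a quotient of $\Lambda_{d-1}/(\bar f,\bar g)$. That module is pseudo-null when $\gcd(\bar f,\bar g)$ is a unit, because no height-one (principal) prime of the UFD $\Lambda_{d-1}$ contains both $\bar f$ and $\bar g$. So $B/T_dB$ is pseudo-null by Lemma~\ref{property}.

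This is exactly how the paper's proof starts, and the rest follows at once:
\begin{itemize}
\item Proposition~\ref{char} gives $\ch_{\Lambda_{d-1}}(B[T_d])=\ch_{\Lambda_{d-1}}(B/T_dB)=(1)$.
\item Since $D$ is a quotient of $B[T_d]$, you get $\ch_{\Lambda_{d-1}}(D)=(1)$. This is precisely the pseudo-nullity of $D$ that you flagged as the main obstacle.
\item \eqref{eq char} then forces $\ch_{\Lambda_{d-1}}(A/T_dA)=(1)$.
\end{itemize}
With this observation, your cancellation step and the torsion check for $B[T_d]$ become unnecessary, and no appeal to Theorem~\ref{ht=2} is needed.
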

\begin{proof}
    If $\gcd(\bar{f},\bar{g})$ is a unit, then $\Lambda_{d-1}/(\bar{f},\bar{g})$ is pseudo-null. Hence, $B/T_dB$ is pseudo-null. By Proposition \ref{char}, we know $\ch_{\Lambda_{d-1}}(B[T_d])=\ch_{\Lambda_{d-1}}(B/T_dB) $. Since $D$ is the image of $B[T_d]$, we have $\ch_{\Lambda_{d-1}}(D) \mid \ch_{\Lambda_{d-1}}(B/T_dB)$. Since $D$ is a subgroup of $A/T_dA$, we have $\ch_{\Lambda_{d-1}}(D) \mid \ch_{\Lambda_{d-1}}(A/T_dA)$. By equality \ref{eq char}, we know $\ch_{\Lambda_{d-1}}(D)=(1)$. Hence, $\ch_{\Lambda_{d-1}}(A/T_dA)=\ch_{\Lambda_{d-1}}(B/T_dB)=(1)$. Therefore, $A/T_dA$ and $B/T_dB$ are pseudo-null. 

    If $A/T_dA$ and $ B/T_dB$ are pseudo-null over $\Lambda_{d-1}$, then by the exact sequence, $\Lambda_{d-1}/(\bar{f},\bar{g})$ is pseudo-null. Hence, $\gcd(\bar{f},\bar{g})$ is a unit.
\end{proof}

In Iwasawa theory, people often need to consider whether $\gcd(\bar{f},\bar{g})$ is a unit. Now, the question is transferred to determining the pseudo-nullity of a quotient of a pseudo-null module, which is the central question addressed in our paper. We provide a few examples, omitting the technical details of Iwasawa theory to keep the focus clear.

Under the hypotheses of \cite[(1.6)]{Bleher2020HigherCC}
there is an exact sequence
\[
0 \to X_W^\psi \to \frac{\Lambda_W}{\mathcal{L}_{\mathfrak{p},\psi}\Lambda_W + \mathcal{L}_{\bar{\mathfrak{p}},\psi}\Lambda_W} \to \alpha(X_W^{\omega\psi^{-1}})(1) \to 0.
\]
Hence, if one specializes by quotienting a line generated by $T$, then the two specialized $p$-adic
$L$-functions  $\bar{\mathcal{L}}_{\mathfrak{p},\psi}$ and $\bar{\mathcal{L}}_{\bar{\mathfrak{p}},\psi}$ have a common height-one divisor if and only if at least
one of the two specialized end terms fails to be pseudo-null.
Related topics also appear in many papers \cite[Cor 4.21]{LimWangYan2026}, \cite{LeiPalvannan2022},\cite{LeiSujatha2021}. For example, \cite{LeiSujatha2021} considers the greatest common divisor $\gcd(L_p^+(E),L_p^-(E))$ of the plus and minus $p$-adic $L$-functions.

\section{Pseudo-nullity in non-commutative Iwasawa algebras}

In the previous section, we said that an $R$-module $M$ is pseudo-null if $M_\p=0$ for any prime ideal $\p$ of height 1. In a non-commutative Iwasawa algebra, there is another generalization of the definition of a pseudo-null module. We will recall the definition and its relation to the classical one. We will try to extend the results in the previous sections to the general case. 

Let $R$ be a ring and $M$ be an $R$-module. Let $E_R(M)$ be the injective envelope of $M$ \cite[Chap.V \S2]{stenstrom2012rings}. One can think of $E_R(M)$ as the ``minimal'' injective $R$-module containing $M$ in some sense. View $R$ as an $R$-module. Consider the ``minimal'' injective resolution of $R$: 
\[
0\rightarrow R \xrightarrow{\mu_0} E_0\xrightarrow{\mu_1} E_1\xrightarrow{\mu_2}\cdots
\]
In other words, $E_0=E_R(R)$ and $E_i=E_R(\coker\mu_{i-1})$. Define the full subcategory $\C^n_R$ of the category of $R$-modules consisting of all $R$-modules $M$ such that $\Hom_R(M,E_0\oplus E_1\oplus E_2\oplus\cdots \oplus E_n)=0$. The subcategory $\C^n_R$ is studied in \cite[Chap.VI]{stenstrom2012rings} and is related to Hereditary Torsion Theory. 

\begin{proposition}[Chap.VI, Prop. 6.9 in \cite{stenstrom2012rings}]\label{ext meaning}
    An $R$-module $M$ lies in $\C_R^n$ if and only if $\Ext^i_R(M',R)=0$ for any $i\leq n$ and any submodule $M'\subset M$.
\end{proposition}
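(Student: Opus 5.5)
We prove both implications by induction on $n$, using the injective resolution $0\to R\to E_0\to E_1\to\cdots$ with $E_0=E_R(R)$ and $E_i=E_R(\coker\mu_{i-1})$. Write $K_i=\coker\mu_{i-1}$ (with $K_0=R$), so that there are short exact sequences $0\to K_i\to E_i\to K_{i+1}\to 0$, $K_{i+1}\subseteq E_{i+1}$ is essential, and $\Ext^i_R(N,R)\cong \Ext^1_R(N,K_{i-1})$ by dimension shifting for $i\ge 1$. The key structural fact is that $\Hom_R(M,E)=0$ for an injective envelope $E=E_R(K)$ if and only if $\Hom_R(M',K)=0$ for every submodule $M'\subseteq M$. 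To see this, suppose $f:M\to E$ is nonzero. Then $f(M)\cap K\neq 0$ by essentiality, and $M'=f^{-1}(K)$ gives a nonzero map $M'\to K$. Conversely, any nonzero map $M'\to K\subseteq E$ extends to $M\to E$ by injectivity.

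For the forward direction, assume $\Hom_R(M,E_0\oplus\cdots\oplus E_n)=0$. Every submodule $M'\subseteq M$ inherits this vanishing: by injectivity of the $E_i$, any nonzero map from $M'$ extends to $M$. So it suffices to show $\Ext^i_R(M,R)=0$ for $i\le n$, and then apply this to each $M'$. Since $\Hom(M,E_i)=0$ for $i\le n$, the complex $\Hom(M,E_\bullet)$ computing $\Ext^\bullet(M,R)$ has zero terms in degrees $0,\dots,n$. Hence its cohomology vanishes in degrees $\le n$.

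For the converse, assume $\Ext^i_R(M',R)=0$ for all $i\le n$ and all $M'\subseteq M$, and prove $\Hom(M,E_k)=0$ for $k\le n$ by induction on $k$. For $k=0$, the key fact reduces this to $\Hom(M',R)=\Ext^0(M',R)=0$ for every $M'$, which is the hypothesis. For the step from $k-1$ to $k$, the key fact reduces us to showing $\Hom(M',K_k)=0$ for every $M'\subseteq M$. Apply $\Hom(M',-)$ to $0\to K_{k-1}\to E_{k-1}\to K_k\to 0$ to get
\[
\Hom(M',E_{k-1})\to\Hom(M',K_k)\to\Ext^1(M',K_{k-1}).
\]
The left term vanishes by the induction hypothesis, since $M'$ is a submodule of $M$ and $\Hom(M,E_{k-1})=0$. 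The right term is isomorphic to $\Ext^k(M',R)$ by dimension shifting. For $k=1$ this is immediate since $K_0=R$. For $k\ge2$ it uses the vanishing of $\Hom(M',E_j)$ for $j<k$, which lets us identify $\Ext^1(M',K_{k-1})$ with the degree-$k$ cohomology of $\Hom(M',E_\bullet)$. In either case it vanishes by hypothesis. Hence $\Hom(M',K_k)=0$.

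I expect the main obstacle to be this dimension-shifting identification $\Ext^1(M',K_{k-1})\cong\Ext^k(M',R)$ for $k\ge 2$. The clean route is to note that $\Ext^j(M',K_i)\cong\Ext^{j+1}(M',K_{i-1})$ for $j\ge1$, because the $E_i$ are injective. Iterating gives $\Ext^1(M',K_{k-1})\cong\Ext^k(M',K_0)=\Ext^k(M',R)$, with no vanishing hypothesis needed at all. This is the version I would use.
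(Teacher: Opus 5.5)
Your proof is correct: the essential-extension criterion ($\Hom_R(M,E_R(K))=0$ if and only if $\Hom_R(M',K)=0$ for all $M'\subseteq M$), combined with the dimension shift $\Ext^1_R(M',K_{k-1})\cong\Ext^k_R(M',R)$ along $0\to K_i\to E_i\to K_{i+1}\to 0$, handles both directions. The paper gives no argument of its own and simply cites Stenström, Chap.~VI, Prop.~6.9; your induction is essentially the standard argument behind that reference.
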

We can relate $\C_R^n$ to $\D_R^n$ when $R $ is a Noetherian commutative ring. Let $R$ be a Noetherian commutative ring. We say that $R$ satisfies Serre's condition $(S_n)$ if 
\[
\depth(R_\p)\geq \min\{n,\Ht(\p)\}
\]
\begin{proposition}[Chap.VII, Prop 6.8 in \cite{stenstrom2012rings}]\label{Sn}
    If a Noetherian commutative ring $R$ satisfies Serre's condition $(S_{n+1})$, then an $R$-torsion module $M$ lies in $\C_R^n$ if and only if $M_\p=0$ for all primes $\p$ with $\Ht(\p)\leq n$.  
\end{proposition}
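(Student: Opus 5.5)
The plan is to make the minimal injective resolution of $R$ explicit using Matlis' structure theory and Bass numbers. This turns the condition $\Hom_R(M,E_0\oplus\cdots\oplus E_n)=0$ into a condition on the support of $M$. Then $(S_{n+1})$ identifies the primes that occur in $E_0,\dots,E_n$ with exactly the primes of height $\le n$. I would not invoke Proposition~\ref{ext meaning}; I would work directly with the definition of $\C^n_R$.

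\emph{Step 1 (structure of $E_i$).} Since $R$ is Noetherian, every injective module is a direct sum of indecomposables $E(R/\p)$. For the minimal injective resolution of $R$, Bass' theorem gives
\[
E_i\cong\bigoplus_{\p\in\Spec R}E(R/\p)^{(\mu_i(\p,R))},\qquad \mu_i(\p,R)=\dim_{k(\p)}\Ext^i_{R_\p}(k(\p),R_\p).
\]
Moreover $\mu_i(\p,R)=0$ for $i<\depth R_\p$ and $\mu_{\depth R_\p}(\p,R)\neq0$. This is the standard characterization of depth via $\Ext^i(k,R_\p)$. Set $\Sigma_n=\{\p:\mu_i(\p,R)\ne0\text{ for some }i\le n\}$. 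Then $\Sigma_n=\{\p:\depth R_\p\le n\}$.

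\emph{Step 2 (Hom into $E(R/\p)$).} The module $E(R/\p)$ is an $R_\p$-module; it is the injective hull of $k(\p)$ over $R_\p$. Hence $\Hom_R(M,E(R/\p))=\Hom_{R_\p}(M_\p,E(R/\p))$. Since $E_{R_\p}(k(\p))$ is an injective cogenerator of $R_\p$-modules, this Hom is nonzero if and only if $M_\p\ne0$. Hom out of $M$ into a direct sum is nonzero as soon as Hom into one summand is nonzero, because the summand embeds. Conversely, Hom into a direct sum vanishes if Hom into each summand vanishes: any nonzero map has a nonzero coordinate into some $E(R/\p)$, which a priori lands in a direct product, but the projections restrict nontrivially. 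Together these give
\[
M\in\C^n_R\iff M_\p=0\text{ for all }\p\in\Sigma_n.
\]

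\emph{Step 3 (using $(S_{n+1})$).} If $\Ht\p\le n$, then $(S_{n+1})$ gives $\depth R_\p\ge\Ht\p$. Together with $\depth R_\p\le\dim R_\p=\Ht\p$ this yields $\depth R_\p=\Ht\p\le n$, so $\p\in\Sigma_n$. Conversely, if $\p\in\Sigma_n$, then $\depth R_\p\le n$. Since $\depth R_\p\ge\min\{n+1,\Ht\p\}$, the minimum must be $\Ht\p$, so $\Ht\p\le\depth R_\p\le n$. Hence $\Sigma_n=\{\p:\Ht\p\le n\}$, and the proposition follows from Step 2.

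The torsion hypothesis on $M$ does not seem to be needed in this argument. It only ensures consistency with the case $n=0$, since $\C^0_R$ consists of torsion modules.

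The main obstacle I expect is Step 1: justifying the Bass-number description of the \emph{minimal} resolution, together with the vanishing and nonvanishing of $\mu_i$ relative to depth. I would cite Bass/Matsumura for this rather than reprove it. Step 2 also needs care for non-finitely generated $M$. The point there is that the cogenerator property gives nonvanishing without finiteness, and localization commutes with $\Hom$ into an $R_\p$-module.
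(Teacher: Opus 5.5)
Your proof is correct. The paper does not prove this statement; it imports it from \cite{stenstrom2012rings}, so there is no internal argument to compare step by step.

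Your route is a self-contained version of the Bass-number computation that the paper itself carries out later, in the proof of Proposition~\ref{ext ht}. There the Cohen--Macaulay hypothesis plays the role that $(S_{n+1})$ plays for you. The one ingredient you need beyond that proof is the \emph{nonvanishing} $\mu_{\depth R_\p}(\p,R)\neq 0$. The paper only uses the vanishing $\mu_i(\p,R)=0$ for $i<\depth R_\p$. The nonvanishing is what gives the direction $M\in\C^n_R\Rightarrow M_\p=0$ for $\Ht\p\le n$, and you identified and cited it correctly.

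In Step~2 the cleaner phrasing is that coordinate projections out of a direct sum exist. Hence a nonzero map into $\bigoplus_\alpha E_\alpha$ has a nonzero component. No direct product is involved.

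Your remark that the torsion hypothesis is redundant is also right. Either side of the equivalence forces $M_\p=0$ at the minimal primes: the right side does so directly. On the left side, $\Sigma_0$ consists of the associated primes of $R$, and these are the minimal primes under $(S_1)$.

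What your approach buys over the citation is a direct proof from Bass's structure theorem and the $\Ext$-characterization of depth, with no appeal to torsion-theory machinery.
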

An integrally closed Noetherian commutative integral domain satisfies Serre's condition $(S_2)$. A Cohen–Macaulay ring satisfies Serre's condition $(S_n)$ for all $n$. Now we give the definition of pseudo-null modules that is often used in noncommutative Iwasawa theory. 

\begin{definition}\label{non def}
    Let $R$ be a ring. Let $M$ be a finitely generated right $R$-module. We call $M$ a pseudo-null $R$-module if $M\in \C_R^1$. 
\end{definition}

The proof of Theorem \ref{ht=2} depends on Lemma \ref{property} and Lemma \ref{quotient}. It is easy to see that the analogue of Lemma \ref{property} also holds for $\C_R^n$. 
\begin{lemma}\label{analog exact}
  Let $0 \rightarrow M' \rightarrow M\rightarrow M''\rightarrow 0$ be a short exact sequence. Then $M\in \C_R^n$ if and only if $M'\in \C_R^n$ and $M''\in \C_R^n$. 
\end{lemma}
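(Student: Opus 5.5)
The cleanest route avoids the Ext characterization and works directly with the definition $\C_R^n=\{M : \Hom_R(M,E)=0\}$, where $E:=E_0\oplus E_1\oplus\cdots\oplus E_n$ is a fixed injective module. The two directions then follow from exactness properties of $\Hom_R(-,E)$.

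First, suppose $M\in\C_R^n$. For the submodule $M'\subseteq M$, apply $\Hom_R(-,E)$ to the inclusion $M'\hookrightarrow M$. Because $E$ is injective, the restriction map $\Hom_R(M,E)\to\Hom_R(M',E)$ is surjective. Its source is $0$, so $\Hom_R(M',E)=0$ and $M'\in\C_R^n$.

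For the quotient, apply $\Hom_R(-,E)$ to the surjection $M\twoheadrightarrow M''$. This gives an injection $\Hom_R(M'',E)\hookrightarrow\Hom_R(M,E)=0$ by left exactness of $\Hom$, so $M''\in\C_R^n$. Neither step needs any finiteness or Noetherian hypothesis.

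Conversely, suppose $M',M''\in\C_R^n$. Left exactness of the contravariant functor $\Hom_R(-,E)$ gives the exact sequence
\[
0\to\Hom_R(M'',E)\to\Hom_R(M,E)\to\Hom_R(M',E).
\]
The outer terms vanish, so the middle term vanishes and $M\in\C_R^n$.

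There is no real obstacle here. The only points to check are that $E$ is injective, being a finite direct sum of injective modules, and that the statement concerns this single fixed module $E$. As an alternative one could use Proposition~\ref{ext meaning} directly. Submodules of $M'$ and of $M$ are covered by the submodule characterization. For the extension direction, a submodule $N\subseteq M$ fits into $0\to N\cap M'\to N\to N/(N\cap M')\to 0$, with $N/(N\cap M')\subseteq M''$, and the long exact $\Ext$ sequence shows $\Ext^i_R(N,R)=0$ for $i\le n$. The quotient direction is harder in the $\Ext$ language, which is why the injective-hull description is the better choice.
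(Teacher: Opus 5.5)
Your proof is correct and matches the paper's intended argument. The paper gives no written proof and only calls this the analogue of Lemma~\ref{property} (which rests on exactness of localization); exactness of $\Hom_R(-,E)$ for the injective module $E=E_0\oplus\cdots\oplus E_n$ is precisely that analogue. One small quibble with your closing aside: the quotient direction is not really harder via Proposition~\ref{ext meaning}, since for $N''\subseteq M''$ with preimage $N\subseteq M$ the exact sequence $\Ext^{i-1}_R(M',R)\to\Ext^i_R(N'',R)\to\Ext^i_R(N,R)$ already forces $\Ext^i_R(N'',R)=0$ for $i\le n$.
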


Fix a central element $T\in Z(R)$ that is not a zero divisor in $R$.  
To prove the analogue of Lemma \ref{quotient}, we need the following lemma.
\begin{lemma}[Rees change of rings]\label{non quotient}
    Let $N$ be a right $R$-module annihilated by $T$. Then there are natural isomorphisms of left $R/TR$-modules,
    \[
    \Ext^i_{R/TR}(N,R/TR)\cong \Ext^{i+1}_R(N,R), 
    \] 
    for all $i\geq 0$.
\end{lemma}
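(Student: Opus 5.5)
The plan is the standard Rees argument: compare $\Ext$ over $R/TR$ and over $R$ by resolving $N$ with projectives over $R$, and use that $R/TR$ has a particularly simple $R$-projective resolution. We work with right modules throughout. Because $T$ is central, $R/TR$ is an $R$-bimodule and $\Hom_R(-,R)$ carries a left $R$-module structure, so all the isomorphisms below can be read as left $R/TR$-modules.

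\textbf{Step 1 (the case $i=0$).} Since $T$ is regular and central, the sequence
\[
0\to R\xrightarrow{\times T} R\to R/TR\to 0
\]
is exact as a sequence of $R$-bimodules. Apply $\Hom_R(N,-)$. Because $NT=0$, the map induced by $\times T$ on each $\Ext^i_R(N,R)$ is multiplication by $T$ through $N$, and this is zero. We also have $\Hom_R(N,R)=0$: if $f\in\Hom_R(N,R)$ then $f(n)T=f(nT)=0$, so $f(n)=0$ since $T$ is regular. The long exact sequence therefore splits into short exact sequences
\[
0\to \Ext^i_R(N,R)\to \Ext^i_R(N,R/TR)\to \Ext^{i+1}_R(N,R)\to 0 .
\]
Moreover $\Hom_R(N,R/TR)=\Hom_{R/TR}(N,R/TR)$, which gives the case $i=0$ directly.

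\textbf{Step 2 (the general case).} Here we need the comparison $\Ext^i_R(N,R/TR)\cong \Ext^i_{R/TR}(N,R/TR)$ together with the vanishing of the left-hand term $\Ext^i_R(N,R)$ at each stage. The cleanest way to get both is a dimension shift. Choose a free $R/TR$-module $F$ and a surjection $F\to N$ with kernel $K$; note that $K$ is again annihilated by $T$. For $F=\bigoplus R/TR$ we have:
\begin{itemize}
\item $\Ext^1_R(R/TR,R)\cong R/TR$, computed from the resolution $0\to R\xrightarrow{T} R$;
\item $\Ext^j_R(R/TR,R)=0$ for $j\ne 1$.
\end{itemize}
These facts extend to arbitrary direct sums (products appear in the $\Hom$, which is harmless here). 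We then induct on $i$. The long exact sequences of $\Ext_{R/TR}(-,R/TR)$ and $\Ext_R(-,R)$ applied to $0\to K\to F\to N\to 0$ are compared via the natural map from Step 1. Using $\Ext^{i}_{R/TR}(F,R/TR)=0$ for $i>0$ and $\Ext^{i+1}_R(F,R)=0$ for $i>0$, we obtain $\Ext^{i+1}_{R/TR}(N,R/TR)\cong\Ext^{i+1}_{R/TR}(K,R/TR)$ shifted appropriately, and likewise $\Ext^{i+2}_R(N,R)\cong\Ext^{i+1}_R(K,R)$. The induction hypothesis applied to $K$ then closes the argument.

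\textbf{Main obstacle.} The delicate point is the low-degree part of the induction. We must check that
\[
\Hom_{R/TR}(F,R/TR)\to\Hom_{R/TR}(K,R/TR)\to\Ext^1_{R/TR}(N,R/TR)\to 0
\]
matches
\[
\Ext^1_R(F,R)\to\Ext^1_R(K,R)\to\Ext^2_R(N,R)\to 0,
\]
and that the degree-$0$ identifications are compatible with the connecting maps, which is naturality of the Step 1 isomorphism. Alternatively, one can avoid this check by the spectral sequence $\Ext^p_{R/TR}(N,\Ext^q_R(R/TR,R))\Rightarrow\Ext^{p+q}_R(N,R)$. Since $\Ext^q_R(R/TR,R)$ is concentrated in $q=1$ with value $R/TR$, the spectral sequence degenerates and yields the isomorphism at once. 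I would present the spectral sequence argument as the main proof.
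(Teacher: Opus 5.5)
Your proposal is correct, but it takes a different route from the paper, which gives no argument at all. The paper simply cites \cite[Lemma~1.2]{AjitabhSmithZhang}, Rees' lemma for a regular \emph{normal} element $x$ with $xr=\sigma(r)x$, and notes that centrality of $T$ makes the twist $\sigma$ trivial.

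You instead prove the statement from scratch. Your main argument is the change-of-rings spectral sequence $\Ext^p_{R/TR}(N,\Ext^q_R(R/TR,R))\Rightarrow\Ext^{p+q}_R(N,R)$. It comes from $\Hom_R(N,-)\cong\Hom_{R/TR}(N,\Hom_R(R/TR,-))$, and coinduction preserves injectives because it is right adjoint to an exact functor. The sequence collapses onto the row $q=1$ and gives the isomorphism for all $i\ge 0$ at once.

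The dimension shift is an equally valid elementary alternative. Its base cases are $i=0$ from Step 1 and $i=1$ from naturality of the Step 1 isomorphism along $K\hookrightarrow F$. The correct index shift there is $\Ext^{i+1}_{R/TR}(N,R/TR)\cong\Ext^{i}_{R/TR}(K,R/TR)$ for $i\ge 1$.

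Your version gives a self-contained proof that shows exactly where each hypothesis enters:
\begin{itemize}
\item regularity of $T$ gives $\Hom_R(R/TR,R)=0$ and the length-one resolution;
\item centrality makes $0\to R\xrightarrow{T}R\to R/TR\to 0$ a sequence of bimodules, so $\Ext^1_R(R/TR,R)\cong R/TR$ with no twist and the left $R/TR$-structures agree.
\end{itemize}
The citation gives the more general normal-element version, which the paper does not need. Your approach is also the same kind of spectral-sequence and long-exact-sequence reasoning the paper uses in Lemmas~\ref{non iso} and~\ref{non lift}.

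One framing sentence in Step 2 is false and should be removed. You say the general case needs $\Ext^i_R(N,R/TR)\cong\Ext^i_{R/TR}(N,R/TR)$ and the vanishing of $\Ext^i_R(N,R)$. Neither holds for $i\ge 2$ in general. Take $R=k[x,y]$, $T=x$, $N=k$: then $\Ext^2_R(k,R)\cong k$ and $\Ext^2_R(k,R/xR)\cong k$, while $\Ext^2_{k[y]}(k,k[y])=0$. In fact your Step 1 sequences combined with the lemma give $0\to\Ext^{i-1}_{R/TR}(N,R/TR)\to\Ext^i_R(N,R/TR)\to\Ext^i_{R/TR}(N,R/TR)\to 0$ for $i\ge 1$. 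Neither the dimension shift nor the spectral sequence uses these claims, so the proof itself is unaffected.
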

\begin{proof}
    It is a special case of \cite[Lemma 1.2]{AjitabhSmithZhang}. Notice that we require that $T$ is a central element in $R$. Hence, the automorphism $\sigma$ in \cite[Lemma 1.2]{AjitabhSmithZhang} is trivial. 
\end{proof}

\begin{lemma}\label{analog quotient}
    Let $T$ be a central element in $R$ and assume $T$ is regular. Let $M$ be a right $R$-module. Then 
    \[
    M/MT\in \C_{R/TR}^n \quad \text{if and only if} \quad M/MT \in \C_R^{n+1}.
    \]
\end{lemma}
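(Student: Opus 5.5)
We prove Lemma~\ref{analog quotient} by combining the characterization of $\C^n$ in Proposition~\ref{ext meaning} with the Rees change of rings isomorphism of Lemma~\ref{non quotient}.

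Put $N=M/MT$ and $\bar R=R/TR$. The starting observation is that $R$-submodules of $N$ and $\bar R$-submodules of $N$ are the same thing. Indeed, $N$ is annihilated by $T$, so the $R$-action on $N$ factors through $\bar R$, and an additive subgroup is $R$-stable exactly when it is $\bar R$-stable. Moreover, every such submodule $N'\subseteq N$ is again annihilated by $T$.

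By Proposition~\ref{ext meaning} applied over $\bar R$, we have $N\in\C^n_{\bar R}$ if and only if $\Ext^i_{\bar R}(N',\bar R)=0$ for all $i\le n$ and all submodules $N'\subseteq N$. Since each $N'$ is killed by $T$, and $T$ is central and regular, Lemma~\ref{non quotient} gives
\[
\Ext^i_{\bar R}(N',\bar R)\cong \Ext^{i+1}_R(N',R).
\]
Hence $N\in\C^n_{\bar R}$ if and only if $\Ext^j_R(N',R)=0$ for all $1\le j\le n+1$ and all submodules $N'$.

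Again by Proposition~\ref{ext meaning}, now over $R$, the condition $N\in\C^{n+1}_R$ requires in addition that $\Ext^0_R(N',R)=\Hom_R(N',R)=0$ for every submodule $N'$. This is the only extra point to check, and it holds automatically. Let $f:N'\to R$ be $R$-linear and $x\in N'$. Then $f(x)T=f(xT)=f(0)=0$. Since $T$ is regular in $R$, this forces $f(x)=0$. So the $j=0$ condition always holds, and the two conditions are equivalent, which proves the lemma.

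There is no real obstacle. The only care needed is that Lemma~\ref{non quotient} is applied to every submodule $N'$ and not just to $N$, which is justified because each $N'$ is annihilated by $T$, and that the indices match: $i\le n$ over $\bar R$ corresponds to $1\le j\le n+1$ over $R$, with $j=0$ covered by the $\Hom$ argument.
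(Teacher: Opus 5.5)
Your proposal is correct and follows the same approach as the paper: both show $\Hom_R(N',R)=0$ for every submodule $N'$ of $M/MT$ because $N'T=0$ and $T$ is regular, and then apply Lemma~\ref{non quotient} to each such $N'$ to shift the $\Ext$ indices by one, with Proposition~\ref{ext meaning} giving the characterization of $\C^n$ over both rings. Your version just spells out the details the paper leaves implicit: that $R$-submodules and $R/TR$-submodules of $M/MT$ coincide, and how the indices $i\le n$ over $R/TR$ match $1\le j\le n+1$ over $R$.
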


\begin{proof}
    Take $N$ to be a submodule of $M/MT$. Since $TN=0$ and $T$ is a nonzero divisor in $R$, we have $ \Hom_R(N,R)=0$. The conclusion follows from Lemma \ref{non quotient}.
\end{proof}

Now we give a generalization of Theorem \ref{ht=2} to a noncommutative ring. However, the criterion is not as explicit as that of Theorem \ref{ht=2}, which uses associated primes. It is also interesting to remark that the proof does not require the ring $R$ or the module $M$ to be Noetherian. Later, we illustrate the connection between the criterion and associated primes by restricting to a Noetherian commutative Cohen-Macaulay ring.

\begin{theorem}\label{non ht=2}
  Let $T$ be a central element in the ring $R$ and assume $T$ is regular. Let $M$ be a right $R$-module. If $M\in \C_R^n$, then $M/MT$ is \textbf{not} in $\C_{R/TR}^n$ if and only if there exists a submodule $M'$ with $MT\subset M'\subset M$ such that $\Ext^{n+1}_R(M',R)[T]\neq 0$.
\end{theorem}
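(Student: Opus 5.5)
By Lemma \ref{analog quotient}, $M/MT\in\C^n_{R/TR}$ if and only if $M/MT\in\C^{n+1}_R$. Proposition \ref{ext meaning} turns this into a condition on Ext groups. Every submodule of $M/MT$ has the form $M'/MT$ with $MT\subseteq M'\subseteq M$. So $M/MT\notin\C^{n+1}_R$ exactly when there is such an $M'$ and some $i\le n+1$ with $\Ext^i_R(M'/MT,R)\neq 0$. The plan is to compute these groups from the short exact sequence
\[
0\to MT\to M'\to M'/MT\to 0 .
\]

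First I control the outer terms. Since $M\in\C^n_R$, Lemma \ref{analog exact} shows that every submodule of $M$ lies in $\C^n_R$. In particular $M'$ and $MT$ are in $\C^n_R$, so $\Ext^i_R(M',R)=\Ext^i_R(MT,R)=0$ for $i\le n$. The long exact sequence of $\Ext_R(-,R)$ then gives $\Ext^i_R(M'/MT,R)=0$ for $i\le n$, and an exact sequence
\[
0\to \Ext^{n+1}_R(M'/MT,R)\to \Ext^{n+1}_R(M',R)\xrightarrow{\ \alpha\ } \Ext^{n+1}_R(MT,R).
\]
Hence $M/MT\notin\C^n_{R/TR}$ if and only if $\ker\alpha\neq0$ for some $M'$ with $MT\subseteq M'\subseteq M$.

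The key step is to identify $\ker\alpha$ with $\Ext^{n+1}_R(M',R)[T]$. Because $T$ is central, right multiplication by $T$ is an $R$-linear map $\tau\colon M'\to MT$, $m\mapsto mT$. The composite $M'\xrightarrow{\tau}MT\hookrightarrow M'$ is multiplication by $T$ on $M'$, so it induces multiplication by $T$ on $\Ext^{n+1}_R(M',R)$ (centrality of $T$ makes the Ext groups $Z(R)$-modules and makes this identification valid). Writing $\iota\colon MT\hookrightarrow M'$ for the inclusion, we have $\alpha=\iota^*$, and functoriality gives $T\cdot(-)=\tau^*\circ\iota^*$. Therefore $\ker\alpha\subseteq\Ext^{n+1}_R(M',R)[T]$.

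For the reverse inclusion I need $\tau^*\colon\Ext^{n+1}_R(MT,R)\to\Ext^{n+1}_R(M',R)$ to be injective. The map $\tau$ is surjective with kernel $K=M'\cap M[T]$. The long exact sequence for $0\to K\to M'\to MT\to0$ gives
\[
\Ext^{n}_R(K,R)\to\Ext^{n+1}_R(MT,R)\xrightarrow{\tau^*}\Ext^{n+1}_R(M',R).
\]
Since $K\subseteq M$, it lies in $\C^n_R$, so $\Ext^n_R(K,R)=0$ and $\tau^*$ is injective. Then $\ker(T)=\ker(\tau^*\iota^*)=\ker\iota^*=\ker\alpha$, which gives $\ker\alpha=\Ext^{n+1}_R(M',R)[T]$ and proves both directions.

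The main obstacle I expect is bookkeeping. One must check that the map induced by $m\mapsto mT$ on $\Ext$ really is multiplication by $T$ under the module structure coming from the bimodule $R$, which needs $T$ central. One must also handle the case $n=0$, where the vanishing used above is just $\Hom_R(K,R)=0$.
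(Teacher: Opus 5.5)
Your reduction to $\ker\alpha$ is correct, and so is the inclusion $\ker\alpha\subseteq\Ext^{n+1}_R(M',R)[T]$ coming from $T=\tau^*\circ\iota^*$. That inclusion already proves the ``only if'' direction.

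\textbf{The gap.} The reverse inclusion fails because the map $\tau\colon M'\to MT$, $m\mapsto mT$, is \emph{not} surjective in general. Its image is $M'T$, which is usually a proper submodule of $MT$. It equals $MT$ when $M'=M$, but not, for instance, when $M'=MT$. So $\tau^*$ need not be injective, and the identity $\ker\alpha=\Ext^{n+1}_R(M',R)[T]$ is false for general $M'$.

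Here is a concrete counterexample. Take $k$ a field, $R=k[[x,y,T]]$, $n=1$, the pseudo-null module $M=R/(x,T^2)$, and $M'=MT\cong R/(x,T)$. Then $\alpha$ is the identity, so $\ker\alpha=0$. But $\Ext^2_R(MT,R)\cong R/(x,T)$ is entirely $T$-torsion. Hence the ``if'' direction does not follow: from $\Ext^{n+1}_R(M',R)[T]\neq 0$ you cannot conclude $\ker\alpha\neq 0$ for that same $M'$.

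\textbf{The repair.} You need to track the cokernel $MT/M'T$ of $\tau$. This is exactly the term the paper handles through the sequence $0\to MT/M'T\to M'/M'T\to M'/MT\to 0$.

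First factor $\tau$ as $M'\twoheadrightarrow M'T\hookrightarrow MT$.
\begin{itemize}
\item For the first map, your argument with $K=M'[T]\in\C^n_R$ applies verbatim to $0\to K\to M'\to M'T\to 0$. So it induces an injection on $\Ext^{n+1}_R(-,R)$.
\item For the second map, the induced map on $\Ext^{n+1}_R(-,R)$ has kernel equal to the image of $\Ext^{n+1}_R(MT/M'T,R)$.
\end{itemize}

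Now argue by contraposition. Suppose $M/MT\in\C^n_{R/TR}$, i.e.\ $M/MT\in\C^{n+1}_R$.
\begin{itemize}
\item The module $MT/M'T$ is a quotient of $M/MT$ via $m\mapsto mT$, so it also lies in $\C^{n+1}_R$.
\item Hence $\Ext^{n+1}_R(MT/M'T,R)=0$, and $\tau^*$ is injective.
\item Therefore $\Ext^{n+1}_R(M',R)[T]=\ker\alpha\cong\Ext^{n+1}_R(M'/MT,R)=0$ for every $M'$ with $MT\subseteq M'\subseteq M$.
\end{itemize}

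With this fix your proof is complete. It also has a genuine advantage: it works entirely over $R$. It therefore bypasses the Grothendieck spectral sequence behind Lemmas~\ref{non iso} and~\ref{non lift}, which the paper needs to identify $\Ext^n_{R/TR}(M'/M'T,R/TR)$ with $\Ext^{n+1}_R(M',R)[T]$.
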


Before we prove Theorem \ref{non ht=2}, we need to prepare some lemmas.

\begin{lemma}\label{non iso}
  Let $T$ be a central element in the ring $R$ and assume $T$ is regular. Let $M$ be a right $R$-module. If $M\in \C_R^n$, then there are isomorphisms
    \[
    \Ext_{R/TR}^i(M/MT,R/TR)\cong \Ext^i_R(M,R/TR)
    \]
    for $0\leq i\leq n$.
\end{lemma}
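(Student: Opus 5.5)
We want $\Ext_{R/TR}^i(M/MT,R/TR)\cong \Ext^i_R(M,R/TR)$ for $0\le i\le n$, assuming $M\in\C_R^n$. The approach is a change-of-rings spectral sequence (or an elementary dimension-shifting version of it) comparing $\Ext_{R/TR}$ of $M\otimes_R R/TR$ with $\Ext_R(M,-)$. The cleanest route is the Grothendieck spectral sequence for the composite
\[
\Hom_R(M,-)=\Hom_{R/TR}(M\otimes_R R/TR,-)
\]
applied to the $R/TR$-module $R/TR$. This gives
\[
E_2^{p,q}=\Ext^p_{R/TR}\bigl(\Tor_q^R(M,R/TR),R/TR\bigr)\Rightarrow \Ext^{p+q}_R(M,R/TR).
\]

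Since $T$ is central and regular, $0\to R\xrightarrow{T}R\to R/TR\to 0$ is a free resolution of $R/TR$ as a left $R$-module. Hence $\Tor_0^R(M,R/TR)=M/MT$, $\Tor_1^R(M,R/TR)=M[T]$, and $\Tor_q=0$ for $q\ge 2$. So the spectral sequence has only two rows, and degenerates into a long exact sequence
\[
0\to E_2^{1,0}\to H^1\to E_2^{0,1}\to E_2^{2,0}\to H^2\to E_2^{1,1}\to E_2^{3,0}\to\cdots
\]
where $E_2^{p,0}=\Ext^p_{R/TR}(M/MT,R/TR)$, $E_2^{p,1}=\Ext^p_{R/TR}(M[T],R/TR)$, and $H^k=\Ext^k_R(M,R/TR)$.

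It therefore suffices to show $E_2^{p,1}=0$ for $0\le p\le n-1$. Then the edge maps $E_2^{i,0}\to H^i$ are isomorphisms for $i\le n$: injectivity at $i$ needs $E_2^{i-2,1}=0$, and surjectivity needs $E_2^{i-1,1}=0$.

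This vanishing is the key step. The module $M[T]$ is a submodule of $M$, so $M[T]\in\C_R^n$ by Lemma \ref{analog exact}; by Proposition \ref{ext meaning} this gives $\Ext^j_R(M[T],R)=0$ for $j\le n$. Since $M[T]$ is annihilated by $T$, Lemma \ref{non quotient} yields
\[
\Ext^p_{R/TR}(M[T],R/TR)\cong\Ext^{p+1}_R(M[T],R)=0
\]
for $p+1\le n$, that is, for $p\le n-1$. This is exactly the vanishing needed.

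If one prefers to avoid spectral sequences, the same argument can be done by hand. Take a projective resolution $P_\bullet\to M$ over $R$; the complex $P_\bullet/P_\bullet T$ then consists of projective $R/TR$-modules with homology $M/MT$ in degree $0$, $M[T]$ in degree $1$, and $0$ elsewhere. One truncates, forms the short exact sequence relating $P_\bullet/P_\bullet T$ to a projective resolution of $M/MT$ and a shifted resolution of $M[T]$, and compares cohomology after applying $\Hom_{R/TR}(-,R/TR)=\Hom_R(-,R/TR)$.

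The main obstacle is bookkeeping: checking that the left/right module conventions make $\Hom_R(M,-)$ factor as stated, and that the indices line up so that precisely the range $i\le n$ is covered. In particular, the case $i=n$ needs $E_2^{n-1,1}=0$ and $E_2^{n-2,1}=0$, both of which are available from the vanishing above.
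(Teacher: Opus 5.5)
Your proposal is correct and is essentially the paper's proof. You use the same spectral sequence $\Ext^p_{R/TR}(\Tor_q^R(M,R/TR),R/TR)\Rightarrow\Ext^{p+q}_R(M,R/TR)$ with only the rows $q=0,1$, and the same vanishing of $\Ext^p_{R/TR}(M[T],R/TR)$ for $p\le n-1$; you get that vanishing directly from Lemma \ref{non quotient}, whereas the paper passes through Lemma \ref{analog quotient}, which is itself proved from Lemma \ref{non quotient}.

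One small thing to tidy is how the spectral sequence is obtained. It comes from composing in the \emph{first} variable: $M\mapsto M\otimes_R R/TR$ followed by $\Hom_{R/TR}(-,R/TR)$ on opposite categories, using that projectives go to projectives, as the paper sets it up. It does not come from writing $\Hom_R(M,-)$ as a composite in the second variable.
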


\begin{proof}
 We will use the Grothendieck spectral sequence. Consider the following two left-exact functors.
\[\begin{array}{ccccc}
 \{R\text{-modules }\}^{op} &\xrightarrow{F}  &  \{R/TR \text{-modules }\}^{op} &\xrightarrow{G}  & \{\text{ abelian groups }\}\\
 M &\rightarrow  & M\otimes_R R/TR  &  &\\
  &  & S & \rightarrow & \Hom_{R/TR}(S,R/TR)
 \end{array}\]
By Hom-tensor adjunction, we have 
\[\begin{array}{rl}
   G\circ F(M)  &= \Hom_{R/TR}(M\otimes_RR/TR,R/TR) \\
     & =\Hom_{R}(M,\Hom_{R/TR}(R/TR,R/TR))\\
     &=\Hom_R(M,R/TR)   
\end{array}
\]
Let $M$ be an injective module in $\{R \text{ module }\}^{op} $. Then $M$ is a projective $R$-module. Hence, $M/MT$ is a projective $R/TR$-module. Therefore, $M/T$ is $G$-acyclic since $\Ext_{R/TR}^i(M/T,R/TR)=0$ for $i\geq 1$. We have the spectral sequence:
\[
\Ext_{R/TR}^p(\Tor_q^R(M,R/TR),R/TR)\Rightarrow \Ext_R^{p+q}(M,R/TR).
\]
Since $0\to R\xrightarrow{\times T} R\to R/TR\to 0$, the projective dimension of $R/TR$ as an $R$-module is at most one. We have  
\[
\Tor_q^R(M,R/TR)=0,
\]
for $q\neq 0,1$. By \cite[Exercise~5.2.2]{Weibel} or \cite[Proposition~2.5]{FuAlgebraicGeometry}, we have a long exact sequence,
\[
\begin{array}{c}
    \cdots\rightarrow\Ext_{R/TR}^{p-2}(\Tor_1^R(M,R/TR),R/TR)
    \rightarrow \Ext_{R/TR}^p(\Tor_0^R(M,R/TR),R/TR)
    \rightarrow \Ext_R^p(M,R/TR)\\
    \rightarrow \Ext_{R/TR}^{p-1}(\Tor_1^R(M,R/TR),R/TR)
    \rightarrow \Ext_{R/TR}^{p+1}(\Tor_0^R(M,R/TR),R/TR)
    \rightarrow \Ext_R^{p+1}(M,R/TR)
    \rightarrow \cdots
\end{array}
\]
Since $M\in \C_R^n$, we have $\Tor_1^R(M,R/TR)\cong M[T]\in \C^n_R$. By Lemma \ref{analog quotient}, we have $\Tor_1^R(M,R/TR)\in \C^{n-1}_{R/TR}$. Thus, $\Ext_{R/TR}^{i}(\Tor_1^R(M,R/TR),R/TR)=0$ for $0\leq i\leq n-1$. By the long exact sequence and $\Tor_0^R(M,R/TR)=M/MT$, we have 
\[
\Ext_{R/TR}^i(M/MT,R/TR)
   \cong \Ext_R^i(M,R/TR)
\]
for $1\leq i\leq n$.   When $i=0$, we have $\Hom_{R/TR}(M/MT,R/TR)= \Hom_R(M,R/TR)$ by definition. 
\end{proof}

\begin{lemma}\label{non lift}
      Let $T$ be a central element in the ring $R$ and assume $T$ is regular. Let $M$ be a right $R$-module. If $M\in \C_R^n$, then there is an isomorphism
    \[
    \Ext_{R/TR}^n(M/MT,R/TR)\cong \Ext^{n+1}_R(M,R)[T].
    \]
\end{lemma}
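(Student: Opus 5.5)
Combine Lemma~\ref{non iso} with the long exact $\Ext$ sequence coming from multiplication by $T$ on $R$. Applying $\Hom_R(M,-)$ to the short exact sequence of right $R$-modules
\[
0\rightarrow R\xrightarrow{\times T} R\rightarrow R/TR\rightarrow 0
\]
yields a long exact sequence
\[
\cdots\rightarrow \Ext^n_R(M,R)\rightarrow \Ext^n_R(M,R/TR)\rightarrow \Ext^{n+1}_R(M,R)\xrightarrow{\times T}\Ext^{n+1}_R(M,R)\rightarrow\cdots
\]
Since $T$ is central, the map $\Ext^{n+1}_R(M,R)\to\Ext^{n+1}_R(M,R)$ induced by left multiplication by $T$ on $R$ is multiplication by $T$ on the Ext group. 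Centrality is also what makes left multiplication by $T$ a right $R$-module endomorphism of $R$ in the first place. Hence the kernel of the last displayed arrow is $\Ext^{n+1}_R(M,R)[T]$.

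Next, because $M\in\C_R^n$, Proposition~\ref{ext meaning} (applied with $M'=M$) gives $\Ext^i_R(M,R)=0$ for all $i\le n$. In particular $\Ext^n_R(M,R)=0$, so the connecting map is injective, and exactness yields
\[
\Ext^n_R(M,R/TR)\cong \Ext^{n+1}_R(M,R)[T].
\]
Finally, Lemma~\ref{non iso} with $i=n$ identifies $\Ext^n_R(M,R/TR)$ with $\Ext^n_{R/TR}(M/MT,R/TR)$. Composing the two isomorphisms gives the claim.

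The main point to check carefully is the identification of the connecting sequence's middle map with multiplication by $T$, together with the module structures (left $R$-modules, or left $R/TR$-modules after restriction). This requires $T$ to be central and regular. Regularity is what makes $R\xrightarrow{\times T}R$ injective. Centrality lets multiplication by $T$ on the coefficient $R$ be computed equally on either side, so it agrees with the natural $T$-action on $\Ext^{n+1}_R(M,R)$ via the left $R$-structure of $R$. Everything else is formal once Lemma~\ref{non iso} and the vanishing of $\Ext^n_R(M,R)$ are in hand.
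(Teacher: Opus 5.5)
Your proposal is correct and follows essentially the same route as the paper: the long exact sequence from $0\to R\xrightarrow{\times T}R\to R/TR\to 0$, the vanishing of $\Ext^n_R(M,R)$ from $M\in\C_R^n$, and Lemma~\ref{non iso} at $i=n$. One small aside: left multiplication by any element of $R$ is already a right $R$-module endomorphism of $R$, so centrality is not what makes that map $R$-linear; centrality is needed so that $TR$ is a two-sided ideal (making $R/TR$ a ring) and for Lemma~\ref{non iso}.
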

\begin{proof}
    Apply $\Hom_R(M,-)$ to 
    \[
    0\to R\xrightarrow{\times T} R\to R/TR\to 0.
    \]
    We get a long exact sequence,
    \[
    \begin{array}{c}
    \cdots
    \rightarrow \Ext_{R}^n(M,R)
    \xrightarrow{\times T} \Ext_{R}^n(M,R)\\
    \rightarrow \Ext_{R}^n(M,R/TR)
    \rightarrow \Ext_{R}^{n+1}(M,R)
    \xrightarrow{\times T} \Ext^{n+1}_R(M,R)
    \rightarrow \cdots
\end{array}.
    \]
    Since $\Ext_{R}^n(M,R)=0$, Lemma \ref{non iso} gives 
    \[
     \Ext_{R/TR}^n(M/MT,R/TR)\cong\Ext_{R}^n(M,R/TR)=\Ext^{n+1}_R(M,R)[T]\]
\end{proof}

\begin{proof}[Proof of Theorem \ref{non ht=2}]
    Since $M\in \C_R^n$, we have $M/MT\in \C_R^n$ by Lemma \ref{analog exact}. By Lemma \ref{analog quotient}, we have $M/MT\in \C_{R/T}^{n-1}$. Hence, $M/MT\in \C_{R/T}^{n}$ if and only if $\Ext_{R/T}^n(N,R/T)=0$ for any submodule $N$ of $M/MT$. Let $M'$ be the preimage of $N$ inside $M$. Consider the following short exact sequence:
    \[
    0\to MT/M'T\to M'/M'T \to M'/MT\to 0.
    \]
    It gives a long exact sequence,
    \[
    \begin{array}{c}
    \cdots\rightarrow\Ext^{n-1}_{R/TR}(MT/M'T,R/TR)
    \rightarrow \Ext^{n}_{R/TR}(M'/MT,R/TR)\\
    \rightarrow \Ext^{n}_{R/TR}(M'/M'T,R/TR)\cong \Ext^{n+1}_{R}(M',R)[T]
    \rightarrow \Ext^{n}_{R/TR}(MT/M'T,R/TR)
    \rightarrow \cdots.
\end{array}
    \]
  The middle isomorphism is given by Lemma \ref{non lift}.

 If $M/MT\in \C_{R/T}^{n}$, since $M'/MT $ is a submodule of $M/MT$ and $MT/M'T$ is a quotient of the image of $M/MT$ under multiplication by $T$, we know $M'/MT\in \C_{R/T}^{n}$ and $MT/M'T\in \C_{R/T}^{n}$ by Lemma \ref{analog exact}. Hence, $ \Ext^{n}_{R/TR}(M'/MT,R/TR)=\Ext^{n}_{R/TR}(MT/M'T,R/TR)=0$. Therefore, $\Ext^{n+1}_{R}(M',R)[T]=0$ by the long exact sequence. 

If $M/MT\not \in \C_{R/T}^{n}$, then there exists a submodule $N=M'/MT$ such that $\Ext^{n}_{R/TR}(M'/MT,R/TR)\neq 0$. On the other hand, since 
$M/MT \in \C_{R/T}^{n-1}$ and $MT/M'T$ is a quotient of the image of $M/MT$ under multiplication by $T$, we have $\Ext^{n-1}_{R/TR}(MT/M'T,R/TR)=0$. Therefore, by the long exact sequence, $\Ext^{n}_{R/TR}(M'/MT,R/TR)\hookrightarrow \Ext^{n}_{R/TR}(M'/M'T,R/TR)$. Hence, $ \Ext^{n+1}_{R}(M',R)[T]\neq 0$.
\end{proof}

\subsection{Comparing with the commutative case}
Next, we show that Theorem \ref{non ht=2} can recover Theorem \ref{ht=2} when we require our ring $R$ to be Noetherian, commutative, and Cohen-Macaulay and the module $M$ to be finitely generated. When $R$ is Noetherian, commutative, and Cohen-Macaulay, by Proposition \ref{Sn}, Definition \ref{non def} of a pseudo-null module coincides with Definition \ref{def}.

We need to recall the properties of a minimal injective resolution for a Noetherian commutative Cohen-Macaulay ring $R$. Recall that the Bass number of the ring $R$ with respect to a prime $P\in \Spec(R)$ is defined as $\mu_i(P,R):=\dim_{k(P)}\Ext^i_{R_P}(k(P), R_P)$, where $k(P)$ is the residue field of $R$ at $P$. By \cite[3.2.9]{BrunsHerzog}, we have a minimal injective resolution for $R$:
\[
0\rightarrow R \xrightarrow{\mu_0} E_0\xrightarrow{\mu_1} E_1\xrightarrow{\mu_2}\cdots,
\]
where 
\[
E_i\cong \bigoplus_{P\in \Spec(R)} E_R(R/P)^{\mu_i(P,R)}.
\]
Since $R_P$ is Cohen--Macaulay, $\depth R_{P} = \dim R_{P} =
\operatorname{ht}(P)$.
By the definition of depth, we have $\Ext^i_{R_P}(k(P),R_{P})=0$ for $i<\operatorname{ht}(P)$, which implies $\mu_i(P,R)=0$ for $i<\operatorname{ht}(P)$. Hence, when $R$ is a Cohen-Macaulay ring, we can write
\[
E_i\cong \bigoplus_{\Ht(P)=i} E_R(R/P)^{\mu_i(P,R)}\oplus\bigoplus_{\Ht(P)<i} E_R(R/P)^{\mu_i(P,R)}.
\]

\begin{proposition}\label{ext ht}
  Let $R$ be a  Noetherian  commutative Cohen-Macaulay ring and let $M$ be a finitely generated pseudo-null $R$-module. Then
  \[
  \Ass_R(\Ext^2_R(M,R))=\{P\in \Ass_R(M): \Ht_R(P)=2\}.
  \]
\end{proposition}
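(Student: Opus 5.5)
Since associated primes and Ext both commute with localization, I will compare the two sets one prime at a time. Because $\Ext^2_R(M,R)$ is finitely generated, $P\in\Ass_R(\Ext^2_R(M,R))$ if and only if $PR_P\in\Ass_{R_P}(\Ext^2_{R_P}(M_P,R_P))$. Likewise $P\in\Ass_R(M)$ if and only if $\depth M_P=0$ with $M_P\neq0$. So it suffices to study the local Cohen--Macaulay ring $A=R_P$ and the module $N=M_P$, with $\dim A=\Ht(P)=:h$.

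\textbf{Support.} Since $M$ is pseudo-null and $R$ is Cohen--Macaulay, Proposition~\ref{Sn} gives $\Ext^i_R(M,R)=0$ for $i=0,1$. More sharply, the standard grade formula gives $\grade(\Ann M)=\Ht(\Ann M)\ge 2$ on a CM ring. Hence $\Ext^2_R(M,R)_Q=0$ whenever $\Ht(Q)<2$ or $M_Q=0$. Consequently every associated prime of $\Ext^2_R(M,R)$ lies in $\Supp M$ and has height $\ge2$.

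\textbf{Height two.} Take $P$ of height $2$ with $M_P\neq 0$. Then $N$ has finite length, and by local duality over the CM ring $A$ (or simply because $\grade(\Ann N)=2$), $\Ext^2_A(N,A)\neq0$ while $\Ext^i_A(N,A)=0$ for $i\neq2$. The module $\Ext^2_A(N,A)$ has finite length and is nonzero, so $PA\in\Ass$. On the other side, $N\ne0$ of finite length means $PA\in\Ass_A(N)$. Thus at height $2$ both conditions reduce to $P\in\Supp M$, and they agree.

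\textbf{Height at least three.} This is the main obstacle. For $\Ht P=h\ge3$, $P$ is never in the right-hand set, so I must show $PA\notin\Ass_A(\Ext^2_A(N,A))$, i.e.\ $\depth\Ext^2_A(N,A)\ge1$. My plan is to use the minimal injective resolution $E_\bullet$ just described. Compute $\Ext^2_A(N,A)$ as $H^2(\Hom(N,E_\bullet))$. The key observation is that $\Hom_A(k, \Ext^2(N,A))$ embeds into $\Hom_A(k,\text{cycles in }\Hom(N,E_2))$. The $E(k)$-summands of $E_2$ with $\Ht(P)=h$ occur only when $\mu_2(PA,A)\ne0$, which forces $2\ge h$. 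Hence $E_2$ contains no copy of $E_A(k)$. So $\Hom(N,E_2)$, and hence its submodule $Z^2$ and the quotient $\Ext^2$, has no elements annihilated by the maximal ideal.

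Here I must be careful, since a quotient of a module without $k$-socle can still have socle. Instead I will argue with the Ischebeck/grade spectral sequence route: $\Ext^2_A(N,A)=\Ext^2_A(N,A)$ with $\Ext^{<2}=0$ gives $\Ext^2_A(N,A)\cong\Hom_A(N,C)$, where $C=\coker(E_0\to E_1)\subseteq E_2$ (more precisely, the image $\Omega^{-2}$ inside $E_2$). This is done by dimension shifting: the vanishing of $\Hom(N,E_0)$ and $\Hom(N,E_1)$ holds because $N\in\C^1$. Then $\Ext^2(N,A)=\Hom(N,\Omega^{-2})\subseteq\Hom(N,E_2)$ is a \emph{submodule}, and $\Hom(k,\Hom(N,E_2))=\Hom(N\otimes k,E_2)=0$, since $E_2$ has no $E(k)$ summand when $h\ge3$. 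This yields depth $\ge1$.

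Finally I will double-check the dimension-shifting isomorphism $\Ext^2(N,A)\cong\Hom(N,\Omega^{-2})$. Using $0\to A\to E_0\to\Omega^{-1}\to0$ together with $\Hom(N,E_0)=\Ext^{\ge1}(N,E_0)=0$, I get $\Ext^2(N,A)\cong\Ext^1(N,\Omega^{-1})$. Then using $0\to\Omega^{-1}\to E_1\to\Omega^{-2}\to0$ together with $\Hom(N,E_1)=0$, I get $\Ext^1(N,\Omega^{-1})\cong\Hom(N,\Omega^{-2})$. This completes the local comparison.
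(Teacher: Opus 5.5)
Your proposal is correct and is essentially the paper's argument: the vanishing of $\Hom(M,E_0)$ and $\Hom(M,E_1)$ makes $\Ext^2_R(M,R)$ a submodule of $\Hom(M,E_2)$. That submodule can only have height-two associated primes, because $\mu_2(P,R)=0$ for $\Ht P\geq 3$; you check this prime by prime via the socle, while the paper does it globally via $\Ass_R E_R(R/P)=\{P\}$. At height-two primes, $\grade(PR_P,R_P)=\depth R_P=2$ forces $\Ext^2\neq 0$.

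One harmless slip: your parenthetical claim that $\Ext^i_A(N,A)=0$ for $i>2$ is false unless $A$ is Gorenstein, since local duality over a Cohen--Macaulay ring uses the canonical module, not $A$. You only ever use $\Ext^{<2}_A(N,A)=0$ and $\Ext^2_A(N,A)\neq 0$, so the proof is unaffected.
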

  \begin{proof}
  
By \cite[\href{https://stacks.math.columbia.edu/tag/08Y8}{Lemma 08Y8}]{stacks-project}, we can view $E_R(R/P)$ as an $R_P$-module. If $\operatorname{ht}(P) \le 1$, then $M_P = 0$ since $M$ is pseudo-null over $R$. This means that for any $m \in M$, there exists $s \in R \setminus P$ such that $sm = 0$, which implies $\Hom_R(M, E_R(R/P))=0 $. Hence,
\[
\Hom_R(M, E_1)=0 .\]
By using the definition of the Ext functor in terms of an injective resolution, we have an injective map: 
\[
\Ext^2_R(M, R)\hookrightarrow  \Hom_R(M,E_2)=\Hom_R(M,  \bigoplus_{\operatorname{ht} P=2} E_R(R/P)^{\mu_2(P,R)})
\]
Since $M$ is a finitely generated $ R$-module, we have 
$R^m  \twoheadrightarrow M$ for some integer $m$. Hence, 
\[
\Hom_R(M,   E_R(R/P))\hookrightarrow \Hom_R(R^m,  E_R(R/P))\cong E_R(R/P)^m.\]
By the proof of \cite[Theorem~18.4 (v)]{MatsumuraCRT}, one has $\operatorname{Ass}_R E_R(R/P)=\{P\}.$  By \cite[Theorem~6.3]{MatsumuraCRT}, we have
\[
\Ass_R(\Ext^2_R(M, R))\subset \{P\in \Spec(R): \Ht(P)=2\}.
\]

Now, we begin the proof of the ``$\subset$'' direction. Let $P$ be an associated prime of $\Ext^2_R(M, R)$. Then $P\in \Supp_R( \Ext^2_R(M, R))$. We have 
\[
0\neq \Ext^2_R(M, R)_P\cong\Ext^2_{R_P}(M_P, R_P),
\]
which implies $M_P\neq 0$. Since $M$ is a pseudo-null module and $\Ht(P)=2$, $P$ is a minimal prime in $\Supp_R(M)$. Therefore, $P\in \Ass_R(M)$. 

For the ``$ \supset $'' direction, take any prime $ P\in \Ass_R(M)$ of height 2. Since $M$ is a pseudo-null $R$-module, we have $\Supp_{R_P}(M_P)=\{PR_P\}$. Then \cite[Prop 1.2.10(e)]{BrunsHerzog} tells us 
\[
\grade(PR_P,R_P)=\inf \{i:\Ext^i_R(M_P,R_P)\neq 0\}.
\]
By the definition of depth \cite[Def 1.2.7]{BrunsHerzog}, we have 
\[
\grade(PR_P,R_P)=\depth R_P=\dim R_P=2.
\]
Hence, 
\[
\Ext^2_R(M,R)_P\cong \Ext_{R_P}^2(M_P,R_P)\neq 0.
\]
Therefore, $P\in \Supp_R\Ext^2_R(M,R)$. Since $M$ is pseudo-null and $\Ht(P)=2$, the prime ideal $P$ is also minimal in $\Supp_R(\Ext^2_R(M,R))$. Hence, $P\in \Ass_R(\Ext^2_R(M,R))$.
  \end{proof}

\begin{corollary}\label{compare}
     Let $R$ be a Noetherian commutative  Cohen-Macaulay ring and let $M$ be a finitely generated pseudo-null $R$-module. Let $T$ be a regular nonunit element in $R$. Then $\Ext_R^2(M,R)[T]\neq 0$ if and only if there exists $P\in \Ass_R(M)$ with $\Ht(P)=2$ and $T\in P$.    

     Consequently, Theorem \ref{non ht=2} implies that $M/MT$ is not in $\C_{R/TR}^1$ if and only if there exists $P\in \Ass_R(M)$ with $\Ht(P)=2$ and $T\in P$.    
\end{corollary}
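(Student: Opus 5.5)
The first claim will follow from Proposition \ref{ext ht}. The second claim will then come from combining the first with Theorem \ref{non ht=2}.

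Set $E:=\Ext^2_R(M,R)$. Since $R$ is Noetherian and $M$ is finitely generated, $E$ is a finitely generated $R$-module. For a finitely generated module over a Noetherian ring, the zero divisors are exactly the union of the associated primes. Hence $E[T]\neq 0$ if and only if $T$ lies in some $P\in\Ass_R(E)$. (If $E=0$, both sides are empty, and the equivalence holds trivially.) By Proposition \ref{ext ht}, $\Ass_R(E)=\{P\in\Ass_R(M):\Ht_R(P)=2\}$. So $E[T]\neq 0$ exactly when $T\in P$ for some height-two associated prime $P$ of $M$, which is the first claim.

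For the consequence, I use Theorem \ref{non ht=2} with $n=1$. By Proposition \ref{Sn} and the Cohen--Macaulay hypothesis, $M\in\C^1_R$. The theorem says $M/MT\notin\C^1_{R/TR}$ if and only if some $M'$ with $MT\subseteq M'\subseteq M$ has $\Ext^2_R(M',R)[T]\neq 0$.

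Each such $M'$ is a submodule of $M$. It is therefore finitely generated (as $R$ is Noetherian) and pseudo-null, and so the first claim applies to $M'$. This gives: $\Ext^2_R(M',R)[T]\neq0$ if and only if there is $P\in\Ass_R(M')$ with $\Ht(P)=2$ and $T\in P$.

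It remains to check that, as $M'$ ranges over the modules with $MT\subseteq M'\subseteq M$, such a prime exists for some $M'$ if and only if one exists for $M$. Taking $M'=M$ gives one direction. For the other, $\Ass_R(M')\subseteq\Ass_R(M)$ since $M'\subseteq M$. This step is the only point needing care, and it is routine.

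Finally, I will note that when $R/TR$ is also Cohen--Macaulay, $\C^1_{R/TR}$ matches the classical pseudo-nullity of Definition \ref{def}. This recovers Theorem \ref{ht=2} in the Cohen--Macaulay case.
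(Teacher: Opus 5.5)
Your proposal is correct and follows the paper's proof. The zero divisors on the finitely generated module $\Ext^2_R(M,R)$ are the union of its associated primes, and Proposition \ref{ext ht} identifies those primes. For the ``consequently'' part, which the paper leaves implicit, your reduction over the intermediate modules $M'$ supplies exactly the missing routine step: apply the first claim to $M'$, use $\Ass_R(M')\subseteq\Ass_R(M)$, and take $M'=M$ for the converse. In your closing remark, the hypothesis that $R/TR$ is Cohen--Macaulay is superfluous, since this holds automatically when $T$ is regular.
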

\begin{proof}
    The finitely generated $R$-module $\Ext_R^2(M,R)$ has nonzero $T$-torsion if and only if $T$ is a zero divisor on $\Ext_R^2(M,R)$, which holds if and only if $T$ is in some associated prime of $\Ext_R^2(M,R)$. By Proposition \ref{ext ht}, we get the equivalence. 
\end{proof}

\section{Application to number theory}\label{fine sel gp}
Let $K$ be a number field.
Let $\I$ be a commutative complete regular local ring of characteristic zero with a finite residue
field of characteristic $p$ and assume $p\geq 3$ and let $\mathbb T$ be a finite free
$\I$-module equipped with a continuous $\I$-linear action of
$\Gal(\bar{K}/K)$. Put
\[
  \I^\vee
  :=\Hom_{\Z_p,cts}(\I,\Q_p/\Z_p),
  \qquad
  \mathbb A
  :=\mathbb T\otimes_{\I}\I^\vee,
\]
where $\I^\vee$ is endowed with the trivial $\Gal(\bar{K}/K)$-action. Thus
$\mathbb A$ is a discrete $\I$-module with the induced $\Gal(\bar{K}/K)$-action.
 Let $S$ be a finite set of primes containing
the primes above $p$, the archimedean primes, and all primes at which
$\mathbb A$ is ramified, and write $G_{K,S}=\Gal(K_S/K)$,
where $K_S$ denotes the maximal algebraic extension of $K$ unramified
outside $S$. The fine Selmer group of $\mathbb A$ over $K$ is
defined by
\[
  R_S(\mathbb A/K)
  :=
  \ker\!\left(
    H^1(G_{K,S},\mathbb A)
    \longrightarrow
    \bigoplus_{v\in S} H^1(K_v,\mathbb A)
  \right),
\]
where the map is the product of the natural localization maps. All cohomology considered in the paper is continuous cohomology when it makes sense. 
More generally, if $L/K$ is an algebraic extension contained in $K_S$,
we define
\[
  R_S(\mathbb A/L)
  :=
  \ker\!\left(
    H^1(G_{L,S},\mathbb A)
    \longrightarrow
    \mathcal H_S^1(\mathbb A/L)
  \right),
\]
where $G_{L,S}:=\Gal(K_S/L)$ and
\[
  \mathcal H_S^1(\mathbb A/L)
  :=
  \varinjlim_{K\subseteq F\subseteq L}
  \bigoplus_{v\in S}\bigoplus_{w\mid v}
  H^1(F_w,\mathbb A),
\]
with the direct limit taken over the finite extensions $F/K$ contained
in $L$.
Its Pontryagin dual $Y_S(\mathbb A/L)
  :=
  R_S(\mathbb A/L)^\vee$
is called the dual fine Selmer group.

\begin{remark}\label{grade regular}
    Before we continue, we remark that pseudo-nullity behaves much better when we restrict to the number-theoretic setting. By \cite[Theorem~A.1(c)]{LimFineSelmer}, if $\I$ is a commutative complete regular local ring with a finite residue field of characteristic $p$ and $G$ is a compact pro $p$-adic analytic group without $p$-torsion, then $\I[[G]]$ is an Auslander regular ring.  Moreover, when \(\I\) has characteristic zero,
\cite[Proposition~2.6]{VenjakobCharacteristic} implies that
\(\I[[G]]\) has no zero divisors. Hence \(\I[[G]]\) is an Auslander
regular domain.
    
    We refer the reader to \cite{Venjakob2002} for the definitions and properties of the Auslander condition, Auslander Gorenstein rings, and Auslander regular rings. We note that, as in \cite[Remark 3.4]{Venjakob2002}, for a commutative ring $R$, Auslander Gorenstein is equivalent to Gorenstein in the usual sense, and Auslander regular is equivalent to regular with finite Krull dimension in the usual sense. 

One advantage of an Auslander Gorenstein ring is that we can control $\Ext_R^i(M', R)$ at the same time for all submodules $M'\subset M$. For a nonzero finitely generated module $M$ over a ring $R$, define its grade as 
\[
j_R(M):=\min \{i\geq 0:\Ext^i_R(M,R)\neq 0\},
\]
with $j_R(0)=\infty$. If $R$ is Auslander Gorenstein, then for every short exact sequence $0 \rightarrow M' \rightarrow M\rightarrow M''\rightarrow 0$ of finitely generated modules, $j_R(M)=\min\{j_R(M'), j_R(M'')\}$; see \cite[Prop 3.6(i)]{Venjakob2002}. Hence, for a finitely generated right module $M$ over an Auslander Gorenstein ring $R$, we have that $M\in \C_R^1$ if and only if $\Ext^0_R(M,R)=\Ext^1_R(M,R)=0$ if and only if $j_R(M)\geq 2$. Theorem \ref{non ht=2} then tells us that if $M$ is a finitely generated pseudo-null module over an Auslander Gorenstein ring $R$, then $M/MT$ is pseudo-null if and only if $\Ext^{2}_R(M,R)[T]= 0$.
\end{remark}

A \emph{specialization} of $\I$ is a local ring homomorphism
$  \phi:\I\longrightarrow\mathcal O_\phi$, where $\mathcal O_\phi$ is a commutative complete regular local
ring of characteristic zero and is finitely generated as an $\I$-module via $\phi$. We write $\PP_\phi:=\ker(\phi)$. Then $\PP_\phi$ is a prime
ideal of $\I$, and $\phi$ factors as
$ \I\twoheadrightarrow\I/\PP_\phi
    \hookrightarrow\mathcal O_\phi$, 
where the second map is a finite injective local homomorphism.

We write $\mathbb T_\phi
    :=
    \mathbb T\otimes_{\I}\mathcal O_\phi$
and
$ \mathbb A_\phi
    :=
    \mathbb T_\phi\otimes_{\mathcal O_\phi}
    \mathcal O_\phi^\vee
$, where $
    \mathcal O_\phi^\vee
    :=
    \Hom_{\Z_p,\mathrm{cts}}
    (\mathcal O_\phi,\Q_p/\Z_p).
$
Then there is a natural
$\Gal(\overline K/K)$-equivariant identification
\[
\begin{aligned}
\Hom_\I(\mathcal O_\phi,\A)
&\cong
\mathbb T\otimes_\I
\Hom_\I(\mathcal O_\phi,\I^\vee)\\
&\cong
\mathbb T\otimes_\I\mathcal O_\phi^\vee\\
&\cong
\mathbb{T}_\phi\otimes_{\mathcal O_\phi}\mathcal O_\phi^\vee
=\A_\phi.
\end{aligned}
\]
\begin{example}[Hida families, Galois representations and specializations]
For a classical Hecke eigenform $f\in S_k(\Gamma_0(N),\chi)$, we can associate a Galois representation with $f$:
\[
\rho_f: \Gal(\bar{\Q}/\Q)\to GL_2(\OO),
\]
where $\OO$ is the ring of integers of a local field extension over $\Q_p$ containing the coefficients of $f$.

Let $\Gamma\cong 1+p\Z_p$ and $\Lambda=\OO[[\Gamma]]$. An arithmetic point is a specialization $\xi: \Lambda\to \bar{\Q}_p$ such that $\xi(x)=\chi(x)x^k$, where $x\in \Gamma$, $k\in \Z$, and $\chi$ is a finite-order character $ \chi: \Gamma\to \bar{\Q}_p^\times$. Let $\mathcal{L}$ be a finite extension of the quotient field of $\Lambda$ and let $\mathbb{I}$ be the normalization of $\Lambda$ in $\mathcal{L}$. Then, we know $\mathbb{I}$ is a finite flat domain over $\Lambda$. We say $\xi: \mathbb{I}\to \bar{\Q}_p $ is an arithmetic point if $\xi|_{\Lambda}$ is an arithmetic point. A formal $q$-expansion $\mathcal{F}=\sum_{n=1}^\infty A_nq^n\in \I[[q]]$ is called a $\Lambda$-adic modular form of tame character $\chi$ and level $N$ if, for almost all arithmetic specializations $\xi$, the specialization $\xi(\mathcal{F})=\sum_{n=1}^\infty\xi(A_n)q^n$ is a $q$-expansion of a classical normalised ordinary eigenform of weight $k$. We can also associate $F$ with a Galois representation: 
\[
\rho_\mathcal{F}: \Gal(\bar{\Q}/\Q)\to GL_2(\I),
\]
and the specialization $\xi\circ \rho_\mathcal{F}$ is the Galois representation associated with the modular form $f=\xi(\mathcal{F})$. Writing $\PP=\ker\xi$, we have $\rho_f=\rho_\mathcal{F}\mod{\PP}$, where $f=\mathcal{F}\mod{\PP}$.
\end{example}

   Let $ L/K$ be a Galois extension inside $G_{K,S}$ containing the cyclotomic $\Z_p$-extension $K^{cyc}/K$ such that $G:=\Gal(L/K)$ is a compact pro $p$-adic Lie group without $p$-torsion. Denote $H:=\Gal(L/K^{cyc})$.

The generalized version of the Coates--Sujatha Conjecture A
\cite[Conjecture~A]{CoatesSujatha2005}, formulated for Galois representations over complete Noetherian coefficient rings in
\cite[Conjecture~A]{LimFineSelmer}, predicts that the dual fine Selmer group $Y(\mathbb{A}/K^{\mathrm{cyc}})$ is finitely generated over \(\I\).
Moreover, \cite[Lemma~5.2]{LimFineSelmer} shows that \(Y(\mathbb{A}/K^{\mathrm{cyc}})\) is finitely generated over \(\I\)
if and only if \(Y(\mathbb{A}/L)\) is finitely generated over
\(\I[[H]]\).

\begin{remark}\label{skew}
By \cite[Section 4]{SchneiderVenjakob2006}, $\I[[G]]$ is a skew power series ring over $\I[[H]]$. The advantage of this fact is that \cite[Prop 3.2]{SchneiderVenjakob2006} tells us that if $S$ is a skew power series ring over $R$ and the $S$-module $M$ is finitely generated over $R$, then $j_R(M)=j_S(M)-1$.  Hence, the module $M$ is pseudo-null over $S$ if and only if $M$ is torsion over $R$. 
\end{remark}

Now, we state a result of \cite{Jha2012} about specialization of the dual fine Selmer group along a Hida family. Denote the dual fine Selmer group associated to the representation of $\mathcal{F}$ as $Y(\mathcal{F}/L)$ and  the dual fine Selmer group associated to the representation of $f$ as  $Y(f/L)$.  
Under the assumptions in his paper \cite{Jha2012} that $K/\Q$ is abelian and that (Nor), (Irr), and (DimS) hold, he shows that if $Y(\mathcal{F}/L)$ is pseudo-null over $\I[[G]]$ and $Y(\mathcal{F}/L)$ is a finitely generated $\I[[H]]$-module, then the Iwasawa module $Y(f_\xi/L)$ is pseudo-null over $\OO_{f_\xi}[[G]]$ for all but finitely many arithmetic specializations $\xi$. His proof carefully calculates the $\I[[H]]$-rank of $Y(f_\xi/L)$.

 In Theorem \ref{generalize Jha}, we will give a generalization of Jha's result without the assumptions that $K/\Q$ is abelian and that (Nor), (Irr), and (DimS) hold, and we consider the big Galois representation $\mathbb{T}$ over any commutative complete regular local ring of characteristic zero with a finite residue field of characteristic \(p\). 

Notice that under the assumption $\I=\OO[[W]]$ in \cite{Jha2012}, the kernel of the specialization $\xi$ is principal. Our Theorem \ref{non ht=2} applies in this situation and gives an explicit description of the exceptional locus. We will discuss it in Subsection \ref{back to Jha}.

\begin{theorem}\label{generalize Jha}
    Let $ L/K$ be a Galois extension inside $G_{K,S}$ containing the cyclotomic $\Z_p$-extension $K^{cyc}/K$ such that $G:=\Gal(L/K)$ is a compact pro $p$-adic Lie group without $p$-torsion. Denote $H:=\Gal(L/K^{cyc})$.

    Assume the Iwasawa module of the dual fine Selmer group $Y_S(\mathbb A/L)$ is pseudo-null over $\I[[G]]$ and is finitely generated over $\I[[H]]$. Then there exists a Zariski dense open subset $U\subset \Spec(\I)$ such that for any specialization $\phi$ with $\PP_\phi=\ker\phi\in U $, the Iwasawa module of the dual fine Selmer group $Y_S(\mathbb{A}_\phi/L)$ is pseudo-null over $\OO_\phi[[G]]$. 
\end{theorem}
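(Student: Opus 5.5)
The plan is to split the statement into an algebraic descent step and an arithmetic control step, and then combine them with Lemma~\ref{analog exact}. Put $M:=Y_S(\A/L)$, $\Lambda:=\I[[G]]$ and $\Lambda_\phi:=\OO_\phi[[G]]$. The arithmetic step shows that for $\PP_\phi$ outside a proper closed subset of $\Spec(\I)$, the natural map
\[
M\otimes_\I\OO_\phi\longrightarrow Y_S(\A_\phi/L)
\]
has kernel and cokernel pseudo-null over $\Lambda_\phi$. The algebraic step shows that $M\otimes_\I\OO_\phi$ is pseudo-null over $\Lambda_\phi$ for all $\PP_\phi$ outside a (possibly different) proper closed subset. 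The set $U$ will be the complement of the union of the two closed subsets, which I will arrange to be of the form $V(a)$ with $a\neq 0$. Since $\I$ is a domain, such a complement is open and Zariski dense. By Remark~\ref{grade regular}, $\Lambda_\phi$ is Auslander regular, so pseudo-nullity is just $j\ge 2$ and is stable under extensions. Hence the two steps together give pseudo-nullity of $Y_S(\A_\phi/L)$.

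For the control step I would use $\A_\phi=\Hom_\I(\OO_\phi,\A)$ and compare $R_S(\A_\phi/L)$ with $\Hom_\I(\OO_\phi,R_S(\A/L))$. Dually, this compares $Y_S(\A_\phi/L)$ with $M\otimes_\I\OO_\phi$. I would write the global and local cohomology as derived functors and run the spectral sequence for $R\Hom_\I(\OO_\phi,-)$ applied to $R\Gamma(G_{L,S},\A)$ and to the local complexes. The kernel and cokernel of the comparison map are then built from the following pieces: the groups $\Ext^i_\I(\OO_\phi,H^0(\ast,\A))$, which come from $H^0$ of $G_{L,S}$ and of the local groups $G_{L_w}$; higher $\Ext$ groups hitting $H^1$; and the defect of the local restriction maps. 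Each of these pieces, once dualized, is a finitely generated module over $\I[[H]]$ or a module induced from a decomposition group. I would show that each is annihilated by a fixed nonzero element of $\I$ away from finitely many height-one primes. Equivalently, I would show that each piece is supported over a proper closed subset of $\Spec(\I)$ together with $\OO_\phi$-torsion, which Remark~\ref{skew} converts into pseudo-nullity over $\Lambda_\phi$.

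For the algebraic step I would use Remark~\ref{skew}: $M$ pseudo-null over $\Lambda$ and finitely generated over $\I[[H]]$ means $M$ is torsion over $\I[[H]]$. Likewise, $M\otimes_\I\OO_\phi$, which is finitely generated over $\OO_\phi[[H]]$, is pseudo-null over $\Lambda_\phi$ iff it is torsion over $\OO_\phi[[H]]$. Since $\I[[H]]$ is Auslander regular, $M$ is perfect. Base change of duality then gives
\[
R\Hom_{\OO_\phi[[H]]}\bigl(M\otimes^{L}_{\I}\OO_\phi,\OO_\phi[[H]]\bigr)\cong R\Hom_{\I[[H]]}\bigl(M,\I[[H]]\bigr)\otimes^{L}_{\I}\OO_\phi .
\]
The right side has cohomology $E^q:=\Ext^q_{\I[[H]]}(M,\I[[H]])$ only in degrees $q\ge1$. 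So $\Hom_{\OO_\phi[[H]]}(M\otimes_\I\OO_\phi,\OO_\phi[[H]])$ is controlled by the terms $\Tor^\I_q(E^q,\OO_\phi)$ and $\Tor^\I_q(M,\OO_\phi)$ for $q\ge1$. I would choose $a\ne0$ in $\I$ such that, after inverting $a$, the finitely many modules $M$ and $E^q$ have no $\I$-torsion beyond a part that is $\I[[H]]$-torsion. This is a generic-flatness argument, applied through a finite filtration by $\I[[H]]$-modules. For $\PP_\phi\not\ni a$ these Tor terms then vanish or are torsion, so $M\otimes_\I\OO_\phi$ is $\OO_\phi[[H]]$-torsion. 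When $\PP_\phi$ is principal and of height one, this step can alternatively be done directly by Theorem~\ref{non ht=2}: the exceptional primes are those with $\Ext^2_\Lambda(M,\Lambda)[\PP]\neq0$, and there are finitely many of them.

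The main obstacle is the generic vanishing in both steps. The modules involved are finitely generated over $\I[[H]]$ but not over $\I$, so the usual generic freeness is not directly available. I would first try to handle this by reducing along a finite filtration with subquotients of the form $\I[[H]]/J$, and comparing $J\cap\I$ with zero. Subquotients with $J\cap\I\neq0$ contribute to the bad locus only through $V(J\cap\I)$. Subquotients with $J\cap\I=0$ have no $\I$-torsion, so their higher Tor against $\OO_\phi$ vanishes. The control step would need the same device applied to the local terms at $v\in S$.
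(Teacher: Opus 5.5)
Your two-step architecture and the final assembly are the paper's: a generic quotient step, a generic control step, closure of pseudo-nullity under extensions over the Auslander regular ring $\OO_\phi[[G]]$, and $U=D(a)$, which is dense because $\I$ is a domain. The gap is in how you obtain the generic statements.

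You route both steps through a generic-flatness device for modules finitely generated over $\I[[H]]$, and its key half is false. The claim that subquotients with $J\cap\I=0$ have no $\I$-torsion, so that their higher $\Tor$ against $\OO_\phi$ vanishes, fails at both implications.
\begin{itemize}
\item Take $H\cong\Z_p$, so $\I[[H]]\cong\I[[X]]$, and $0\neq c\in\mathfrak m_\I$ with $J=\I[[X]]\,cX$. Then $J\cap\I=0$, but $cX\in J$ and $X\notin J$. So $\I[[H]]/J$ has $\I$-torsion.
\item Torsion-free is not flat once $\dim\I\ge 2$. Let $\I=\Z_p[[w_1,w_2]]$ and $N=\I[[X]]/(w_2X-w_1)\cong\Z_p[[w_2,X]]$. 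Then $N$ is $\I$-torsion-free and $J\cap\I=0$. Yet for the specialization $\I\to\I/(w_1,w_2)=\Z_p$ one gets $\Tor_1^\I(N,\Z_p)=\ker\bigl(w_2X-w_1\text{ on }\Z_p[[X]]\bigr)=\Z_p[[X]]\neq0$.
\end{itemize}
What your base-change argument actually requires is that the $\Tor^\I_q(E^q,\OO_\phi)$ with $q\ge1$, where $E^q=\Ext^q_{\I[[H]]}(M,\I[[H]])$, be $\OO_\phi$-torsion for generic $\phi$. That is generic flatness over $\I$ for modules not of finite type over $\I$. This is not a standard fact, and your filtration does not supply it.

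The paper avoids this in both steps.

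\textbf{Quotient step.} No duality or $\Tor$ is needed. Since $M$ is $\I[[H]]$-torsion, each generator $m_i$ has a nonzero annihilator $s_i\in\I[[H]]$. Choose an open $U\le H$ such that every image of $s_i$ in $\I[H/U]$ is nonzero, pick a nonzero coefficient $c_i$ of each, and set $a=\prod_i c_i$. If $\phi(a)\neq0$, the images of the $s_i$ in the domain $\OO_\phi[[H]]$ are nonzero. Hence every $f\in\Hom_{\OO_\phi[[H]]}(M\otimes_\I\OO_\phi,\OO_\phi[[H]])$ satisfies $s_if(m_i)=0$, so $f=0$, and Remark~\ref{skew} finishes.

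\textbf{Control step.} Here you have misplaced the difficulty. Write $\alpha$ and $\beta$ for the comparison maps on $H^1(G_{L,S},-)$ and on the local terms. The five-term sequence of $\Ext^a_\I(\OO_\phi,H^b(\G,\A))\Rightarrow H^{a+b}(\G,\A_\phi)$ gives $\ker\alpha=\Ext^1_\I(\OO_\phi,\A^{G_{L,S}})$ and $\coker\alpha\subseteq\Ext^2_\I(\OO_\phi,\A^{G_{L,S}})$. By the snake lemma, only $\ker\beta$ is needed on the local side, and it is built from $E_w:=\Ext^1_\I(\OO_\phi,\A^{G_{L_w}})$. No $\Ext$ into $H^1$ and no further restriction defect enters.

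The duals of these groups are $\Tor_i^\I(\OO_\phi,(\A^{\G})^\vee)$. Since $(\A^{\G})^\vee$ is a quotient of $\A^\vee$, which is finite free over $\I$, they are finitely generated over $\I$ itself, and ordinary generic freeness applies. The local contribution is then a finite sum of $\OO_\phi[[H]]\widehat\otimes_{\OO_\phi[[H_w]]}E_w^\vee$, one for each prime $\eta\mid v$ of $K^{\mathrm{cyc}}$ with a chosen $w\mid\eta$. This is $\OO_\phi[[H]]$-torsion as soon as each $E_w^\vee$ is $\OO_\phi$-torsion.
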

For simplicity, we say a property holds for Zariski generic specializations $\phi$ if there exists a Zariski open subset $U\subset \Spec(\I)$ such that the property holds for every $\phi$ with $\ker\phi\in U$.

By definition, we have the following diagram, and each row is exact.
\begin{equation}
    \label{compare terms}
 \begin{tikzcd}[column sep=tiny, row sep=small]
0 \arrow[r]
& R_S(\A_\phi/L)
  \arrow[r]
  \arrow[d,"r"]
& H^1(G_{L,S},\A_\phi)
  \arrow[r]
  \arrow[d,"\alpha"]
& \mathcal H_S^1(\A_\phi/L)
  \arrow[d,"\beta"]
\\
0 \arrow[r]
& \Hom_{\I}\!\left(
      \mathcal O_\phi,
      R_S(\A/L)
  \right)
  \arrow[r]
  \arrow[d]
& \Hom_{\I}\!\left(
      \mathcal O_\phi,
      H^1(G_{L,S},\A)
  \right)
  \arrow[r]
  \arrow[d]
& \Hom_{\I}\!\left(
      \mathcal O_\phi,
      \mathcal H_S^1(\A/L)
  \right)
  \arrow[d]
\\
0 \arrow[r]
& \Hom_{\I}\!\left(
      \I,
      R_S(\A/L)
  \right)
  \arrow[r]
  \arrow[d,"\sim"]
& \Hom_{\I}\!\left(
      \I,
      H^1(G_{L,S},\A)
  \right)
  \arrow[r]
  \arrow[d,"\sim"]
& \Hom_{\I}\!\left(
      \I,
      \mathcal H_S^1(\A/L)
  \right)
  \arrow[d,"\sim"]
\\
0 \arrow[r]
& R_S(\A/L) \arrow[r]
& H^1(G_{L,S},\A) \arrow[r]
& \mathcal H_S^1(\A/L).
\end{tikzcd}
\end{equation}
The vertical map in the first row is induced by evaluating the cohomology class at $\OO_\phi$ since $\A_\phi\cong\Hom_\I(\OO_\phi,\A)$. The vertical map in the second row is induced by the specialization $\phi:\I\to \OO_\phi$.

Hence, the proof is divided into two steps. First, we build a generic control theorem to compare $Y_S(\mathbb{A}_\phi/L)$ with $Y_S(\mathbb A/L)\otimes_\I\OO_\phi$. Secondly, we build a generic quotient theorem to compare  $Y_S(\mathbb A/L)\otimes_\I\OO_\phi$ with  $Y_S(\mathbb A/L)$.

\begin{theorem}\label{generic control}
        Let $ L/K$ be a Galois extension inside $G_{K,S}$ containing the cyclotomic $\Z_p$-extension $K^{cyc}/K$ such that $G:=\Gal(L/K)$ is a compact pro $p$-adic Lie group without $p$-torsion. Denote $H:=\Gal(L/K^{cyc})$.

        The kernel and cokernel of the natural map  $ Y_S(\mathbb A/L)\otimes_\I\OO_\phi\to Y_S(\mathbb{A}_\phi/L)$ are pseudo-null $\OO_\phi[[G]]$-modules for Zariski generic many specializations $\phi$. 
\end{theorem}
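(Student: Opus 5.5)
Dualize diagram \eqref{compare terms} and control the maps $r$, $\alpha$, $\beta$ separately; the aim is to show that $\ker r$ and $\coker r$ have pseudo-null duals for Zariski generic $\phi$. First note that the composite of the middle and bottom vertical maps identifies the second row with $\Hom_\I(\OO_\phi,-)$ of the last row. Pontryagin duality converts $\Hom_\I(\OO_\phi,R_S(\A/L))^\vee$ into $Y_S(\A/L)\otimes_\I\OO_\phi$, so the dual of $r$ is exactly the natural map in the statement. By the snake lemma applied to the top two rows, $\ker r\hookrightarrow\ker\alpha$. Also $\coker r$ is controlled by $\coker\alpha$, by $\ker\beta$, and by the failure of left exactness of $\Hom_\I(\OO_\phi,-)$ on the sequence defining $R_S(\A/L)$. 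That failure is measured by $\Ext^1_\I(\OO_\phi,\cdot)$ terms of the image of $H^1\to\mathcal H^1_S$.

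To analyze $\alpha$, I would choose a finite free resolution of $\OO_\phi$ over $\I$ (possible since $\I$ is regular local). Applying $\Hom_\I(-,\A)$ to it gives a resolution computing $\A_\phi=\Hom_\I(\OO_\phi,\A)$ together with the higher terms $\Ext^i_\I(\OO_\phi,\A)$. The latter are Pontryagin dual to $\Tor^\I_i(\mathbb T^\ast,\OO_\phi)$-type terms, and these vanish because $\mathbb T$ is free. A hypercohomology spectral sequence
\[
\Ext^p_\I\bigl(\OO_\phi,H^q(G_{L,S},\A)\bigr)\Rightarrow H^{p+q}(G_{L,S},\A_\phi)
\]
then shows that $\ker\alpha$ and $\coker\alpha$ are subquotients of $\Ext^p_\I(\OO_\phi,H^q(G_{L,S},\A))$ with $p\ge 1$ and $q\in\{0,1,2\}$; the duals of the $H^q$ are finitely generated $\I[[G]]$-modules. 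The same spectral sequence applied to the local cohomology groups, followed by direct limits, handles $\beta$.

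The key algebraic input is a generic vanishing lemma. Let $N$ be a finitely generated $\I[[G]]$-module. Then there is a nonempty open $U\subset\Spec\I$ such that for every $\phi$ with $\PP_\phi\in U$, each $\Tor^\I_p(N,\OO_\phi)$ with $p\ge1$ is a pseudo-null $\OO_\phi[[G]]$-module. To prove it, consider the $\I$-torsion submodule of $N$. Its annihilator in $\I$ is nonzero after a dévissage using the Noetherian property, or more precisely its support over $\I$ is a proper closed subset $Z$ of $\Spec\I$. For $\PP_\phi\notin Z$, the Tor terms are computed from a resolution of $\OO_\phi$, and they are killed by an element of $\I$ not lying in $\PP_\phi$. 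Such an element becomes nonzero in $\OO_\phi$, so these modules are torsion over $\OO_\phi$ and have $\OO_\phi[[G]]$-grade $\ge 1$. To upgrade grade $\ge1$ to grade $\ge2$, I would use the Auslander regularity from Remark~\ref{grade regular}. Over $\I[[G]]$, the module $\Tor_p$ is annihilated by $\PP_\phi$ together with an extra element outside $\PP_\phi$, which gives codimension at least two relative to $\I[[G]]/\PP_\phi$. The grade comparison in Lemma~\ref{analog quotient}, iterated along a regular sequence generating $\PP_\phi$ (since $\I/\PP_\phi$ need not be regular, one first passes to $\OO_\phi$ via finiteness), then gives pseudo-nullity over $\OO_\phi[[G]]$. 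Finally, intersect the finitely many open sets obtained for $q=0,1,2$, for the local terms at the finitely many $v\in S$, and for the cokernel of localization.

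The main obstacle is the local term $\beta$. There the direct limit over infinitely many places $w\mid v$ produces induced modules $\I[[G]]\otimes_{\I[[G_w]]}(\cdot)$ rather than a single finitely generated module. I would handle it through Shapiro's lemma: for each $v\in S$, write the local contribution as coinduced from a decomposition group $G_w$, apply the generic lemma over $\I[[G_w]]$, and transfer the grade bound to $\I[[G]]$ using flatness of $\I[[G]]$ over $\I[[G_w]]$. A secondary subtlety is that the open sets must be independent of $\phi$. This holds because they are defined by supports over $\I$ of modules that depend only on $\A$.
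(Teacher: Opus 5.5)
Your architecture matches the paper's: the snake lemma on \eqref{compare terms}, the spectral sequence $\Ext^a_\I(\OO_\phi,H^b)\Rightarrow H^{a+b}(\A_\phi)$ for $\alpha$, and coinduction for $\beta$. The gap is your generic vanishing lemma for an arbitrary finitely generated $\I[[G]]$-module $N$. As stated it is false, and its proof fails exactly where grade two is supposed to appear.

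Being annihilated by $\PP_\phi$ and by one element $b\notin\PP_\phi$ gives only codimension one inside $\I[[G]]/\PP_\phi$, i.e.\ grade $\ge 1$ over $\OO_\phi[[G]]$. For instance, $\OO_\phi[[G]]/(\phi(b))$ has grade exactly one. Iterating Lemma~\ref{analog quotient} along generators of $\PP_\phi$ loses one unit of grade per generator, so it cannot recover the missing unit. A secondary problem: controlling the $\I$-torsion submodule does not control $\Tor^\I_p(N/N_{\mathrm{tors}},\OO_\phi)$, because torsion-free does not imply flat at primes of height $\ge 2$.

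Here is a concrete counterexample. Take $\I=\Z_p[[W_1,W_2]]$ and $N=\I[[G]]/(W_1,W_2)$. For $c\in\Z_p$ let $\phi_c:\I\to\Z_p[[Y]]$ send $W_1\mapsto Y$ and $W_2\mapsto cY$. Then $\PP_{\phi_c}=(W_2-cW_1)$, and the Koszul complex gives $\Tor_1^\I(N,\OO_{\phi_c})\cong\OO_{\phi_c}[[G]]/(Y)$, which has grade one. Every nonempty open subset of $\Spec\I$ contains $(W_2-cW_1)$ for all but finitely many $c$. So no open set $U$ works.

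The missing idea, which makes the paper's proof go through, is that only $q=0$ contributes.
\begin{itemize}
\item The five-term sequence of Lemma~\ref{five exact} gives $\ker\alpha\cong\Ext^1_\I(\OO_\phi,\A^{G_{L,S}})$ and $\coker\alpha\hookrightarrow\Ext^2_\I(\OO_\phi,\A^{G_{L,S}})$.
\item Likewise, the local kernel at $w$ is $\Ext^1_\I(\OO_\phi,\A^{G_{L_w}})$.
\item The duals of these modules are $\Tor_i^\I(\OO_\phi,(\A^{G_{L,S}})^\vee)$ and $\Tor_1^\I(\OO_\phi,(\A^{G_{L_w}})^\vee)$.
\item Here $(\A^{G_{L,S}})^\vee$ and $(\A^{G_{L_w}})^\vee$ are quotients of $\A^\vee$, which is free of finite rank over $\I$. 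So they are finitely generated over $\I$.
\item Generic freeness over $\I$ then makes these Tor groups finitely generated torsion $\OO_\phi$-modules for Zariski generic $\phi$.
\end{itemize}
Grade two then comes from the group direction, not from $\PP_\phi$. A finitely generated torsion $\OO_\phi$-module is finitely generated and torsion over $\OO_\phi[[H]]$, hence pseudo-null over $\OO_\phi[[G]]$ by Remark~\ref{skew}.

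With this correction, your treatment of $\beta$ by coinduction from $G_w$ does work, and it is a legitimate variant of the paper's induction from $H_w$ to $H$. For finite $v$, the image of $G_w$ in $\Gal(K^{cyc}/K)$ is open, so Remark~\ref{skew} applies to $H_w\subset G_w$. Flat induction to $\OO_\phi[[G]]$ then preserves grade.

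Finally, no extra $\Ext^1$ term from ``failure of left exactness'' arises. $\Hom_\I(\OO_\phi,-)$ is left exact, so the second row of the diagram is exact. The snake lemma applied to the images gives $\ker r\subset\ker\alpha$, and $\coker r$ is controlled by $\coker\alpha$ and $\ker\beta$ alone.
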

\begin{proof}
We have $Y_S(\mathbb{A}_\phi/L)= R_S(\A_\phi/L)^\vee$ and $ Y_S(\mathbb A/L)\otimes_\I\OO_\phi= \Hom_{\I}( \mathcal O_\phi,  R_S(\A/L))^\vee$.
By the commutative diagram \eqref{compare terms}, it is equivalent to showing that $\ker r$ and $\coker r$ are pseudo-null $\OO_\phi[[G]] $-modules for Zariski generic $\phi$. By the Snake lemma, it is enough for us to show that $\ker\alpha$, $\coker\alpha$, $\ker\beta$ are pseudo-null $\I[[G]]$-modules for Zariski generic $\phi$.

By Lemma \ref{five exact}, we know $\ker\alpha=\Ext_{\I}^{1}(\OO_\phi,\A^{G_{L,S}})$ and $\coker\alpha\subset \Ext_{\I}^{2}(\OO_\phi,\A^{G_{L,S}})$. By Lemma \ref{generic torsion}, we know the duals of both modules are finitely generated torsion $\OO_\phi$-modules for Zariski generic $\phi$, hence are pseudo-null $\OO_\phi[[G]]$-modules for Zariski generic $\phi$ by Remark \ref{skew}.

Fix a finite prime \(v\in S\) of \(K\). Let $\eta\mid v$ be a prime of \(K^{\mathrm{cyc}}\), and choose a prime $w\mid \eta$ of \(L\). Since \(H\) fixes \(K^{\mathrm{cyc}}\), the group \(H\)
acts  on the set of primes of \(L\) lying above \(\eta\). Define the
decomposition subgroup $H_w=\{h\in H:h(w)=w\}$.
By Lemma \ref{five exact}, $E_w:= \Ext_{\I}^{1}(\OO_\phi,\A^{G_{L_w}})$ is the kernel of $H^1(L_w,A_\phi)\to \Hom_\I(\OO_\phi,H^1(L_w,\mathbb A))$, whose dual is a finitely generated torsion $\OO_\phi$-module for Zariski generic $\phi$.

For \(h\in H\), conjugation identifies the local Galois groups at
\(w\) and \(hw\), and hence induces an isomorphism
$E_w\xrightarrow{\;\sim\;}E_{hw}$.
Thus the collection of $E_{w'}$ for all primes \(w'\mid \eta\) is obtained from \(E_w\) by allowing \(H\) to
act on the cosets \(H/H_w\). Equivalently, on the discrete side,
\[
E_\eta
\simeq
\operatorname{Coind}_{H_w}^{H}(E_w).
\]

Passing to Pontryagin duals converts continuous coinduction into
completed induction. We have 
$E_\eta^\vee
\simeq
\mathcal O_\phi[[H]]
\widehat\otimes_{\mathcal O_\phi[[H_w]]}
E_w^\vee.$
It is a finitely generated torsion $\OO_\phi[[H]] $-module for Zariski generic $\phi$ and hence is pseudo-null over $\OO_\phi[[G]]$ for Zariski generic $\phi$.

Finally, we have
\[
(\ker\beta)^\vee=\bigoplus_{v\in S}\bigoplus_{\eta\mid v}
\mathcal O_\phi[[H]]
\widehat\otimes_{\mathcal O_\phi[[H_w]]}
E_w^\vee,
\]
where for each \(\eta\mid v\) one chooses a prime \(w\mid\eta\) of
\(L\). The index set of the sum is finite, as every prime is finitely decomposed in the cyclotomic $\Z_p$-extension. Hence, $\ker\beta$ is pseudo-null over $\OO_\phi[[G]]$ for Zariski generic $\phi$.
  
\end{proof}
\begin{remark}\label{jha comment}
    If we assume $(\mathrm{Dim}_S)$ as \cite{Jha2012} that the decomposition group
$G_v$ satisfies $\dim G_v\geq 2$ for every $v\in S$, then $\dim H_w\geq 1 $. Hence, $E_w^\vee$ is a finitely generated torsion $\OO_\phi[[H_w]]$-module. Therefore, $(\ker\beta)^\vee$ is a pseudo-null $\OO_\phi[[G]]$-module automatically. Hence, assume $(\mathrm{Dim}_S)$, the natural map  $ Y_S(\mathbb A/L)\otimes_\I\OO_\phi\to Y_S(\mathbb{A}_\phi/L)$ is pseudo-isomorphism for every specialization $\phi$. Without assuming $(\mathrm{Dim}_S)$, even if we know $Y_S(\mathbb{A}_{\phi_0}/L)$ is pseudo-null  $\OO_{\phi_0}[[G]]$-module,  the natural map  $ Y_S(\mathbb A/L)\otimes_\I\OO_{\phi_0}\to Y_S(\mathbb{A}_{\phi_0}/L)$ may fail to be pseudo-isomorphism.  For instance, via the Shapiro
identification, one may take the anticyclotomic deformation
$\mathbb T=\Z_p[[\Gal(F_{\mathrm{ac}}/F)]]$ arising in
\cite[Proposition~6.1]{qi2026pseudonullityfineselmergroups}
to obtain such a counterexample.
\end{remark}

\begin{remark}
    The proof of Theorem \ref{generic control} does not use the assumption that the Iwasawa module of the dual fine Selmer group $Y_S(\mathbb A/L)$ is pseudo-null over $\I[[G]]$ and is finitely generated over $\I[[H]]$.
\end{remark}

Now, we prove the two lemmas that we used in the proof.
\begin{lemma}\label{five exact}
    Let $\I$ be a commutative complete regular local ring of characteristic zero with a finite residue field of characteristic $p$, and let $\mathbb T$ be a finite free $\I$-module equipped with a continuous $\I$-linear action of a profinite group
$\G$. Let $\phi$ be a specialization of $\I$ and put $ \mathbb A
  :=\mathbb T\otimes_{\I}\Hom_{\Z_p,cts}(\I,\Q_p/\Z_p)$ and $\A_\phi:=\Hom_\I(\OO_\phi,\A)$. We have
\[
\begin{aligned}
  0\longrightarrow\Ext_{\I}^{1}(\OO_\phi,\A^\G)
  &\longrightarrow H^1(\G,\A_\phi)
    \xrightarrow{\alpha_\G}\Hom_\I(\OO_\phi,H^1(\G,\A))\\
  &\longrightarrow\Ext_{\I}^{2}(\OO_\phi,\A^\G)
  \longrightarrow H^2(\G,\A_\phi).
\end{aligned}
\]
\end{lemma}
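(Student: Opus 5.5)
The exact sequence should come from a Grothendieck spectral sequence, namely the one for a composite of functors. Since $\A_\phi=\Hom_\I(\OO_\phi,\A)$, the functor $X\mapsto \Hom_\I(\OO_\phi,X)^{\G}=\Hom_\I(\OO_\phi,X^{\G})$ on discrete $\I[[\G]]$-modules factors in two ways:
\[
X\mapsto X^{\G}\mapsto \Hom_\I(\OO_\phi,X^{\G}), \qquad X\mapsto \Hom_\I(\OO_\phi,X)\mapsto \Hom_\I(\OO_\phi,X)^{\G}.
\]
I would use the first factorization. It gives a spectral sequence
\[
E_2^{p,q}=\Ext^p_\I\bigl(\OO_\phi,H^q(\G,-)\bigr)\Rightarrow R^{p+q}\bigl(\Hom_\I(\OO_\phi,-)^{\G}\bigr),
\]
provided $(-)^{\G}$ sends injective discrete $\I[[\G]]$-modules to $\Hom_\I(\OO_\phi,-)$-acyclic modules. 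This holds because $\Hom_{\I[[\G]]}(\I,-)$ is right adjoint to the exact functor sending an $\I$-module to the same module with trivial action. Hence it preserves injectives, and injective $\I$-modules are $\Ext_\I$-acyclic.

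Next I would identify the abutment with $H^n(\G,\A_\phi)$ when evaluated at $\A$. This needs $\Ext^q_\I(\OO_\phi,\A)=0$ for $q\ge1$, since then the second factorization collapses: the derived functors of $\Hom_\I(\OO_\phi,X)^{\G}$ at $\A$ can be computed by applying $H^\ast(\G,-)$ to $\Hom_\I(\OO_\phi,\A)=\A_\phi$. This again uses a composite-functor argument, or more simply a resolution of $\A$ by injective discrete $\I[[\G]]$-modules that are also $\I$-injective. The acyclicity itself holds because $\A\cong \mathbb T\otimes_\I\I^\vee\cong(\I^\vee)^{r}$ as an $\I$-module, and $\I^\vee$ is the Matlis dual of $\I$, i.e. the injective hull $E_\I(k)$, so it is an injective $\I$-module. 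I expect the main technical point to be the compatibility of the discrete topology and continuity with these derived functors. I would treat it by working in the category of discrete $\I$-torsion $\G$-modules, where $\A$, $\A_\phi$ and the $\Ext$ groups live. There $\Ext_\I(\OO_\phi,-)$ computed in all $\I$-modules agrees with its discrete version, since $\OO_\phi$ is finitely generated and hence finitely presented.

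Finally, I would apply the standard five-term exact sequence of low-degree terms:
\[
0\to E_2^{1,0}\to H^1\to E_2^{0,1}\to E_2^{2,0}\to H^2.
\]
Here $E_2^{1,0}=\Ext^1_\I(\OO_\phi,\A^\G)$, $H^1=H^1(\G,\A_\phi)$, $E_2^{0,1}=\Hom_\I(\OO_\phi,H^1(\G,\A))$, $E_2^{2,0}=\Ext^2_\I(\OO_\phi,\A^\G)$ and $H^2=H^2(\G,\A_\phi)$. It remains to check that the edge map $H^1(\G,\A_\phi)\to E_2^{0,1}$ is the evaluation map $\alpha_\G$. This is a naturality check: $\alpha_\G$ is induced by the maps $\A_\phi\to\A$, $f\mapsto f(x)$, for $x\in\OO_\phi$, and the edge map is functorial in the same way.
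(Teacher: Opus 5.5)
Your proposal is correct and follows essentially the same route as the paper. Both use the Grothendieck spectral sequence for $X\mapsto X^{\G}\mapsto \Hom_\I(\OO_\phi,X^{\G})$, with acyclicity coming from $(-)^{\G}$ preserving injectives; your trivial-action adjunction is exactly the content of the Lam exercise the paper cites. Both identify the abutment with $H^n(\G,\A_\phi)$ through a second composite-functor spectral sequence that collapses because $\A\cong(\I^\vee)^r$ is $\I$-injective, and both finish with the five-term exact sequence. You are, if anything, more careful than the paper about working in the category of discrete modules and about identifying the edge map with $\alpha_\G$.
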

\begin{proof}
     We will use the Grothendieck spectral sequence. Consider the following two left-exact functors.
\[
\begin{array}{ccccc}
\{\I[[\G]]\text{-modules}\}
& \xrightarrow{\ F\ } &
\{\I\text{-modules}\}
& \xrightarrow{\ E\ } &
\{\OO_\phi\text{-modules}\}
\\[2mm]
M
& \longmapsto &
M^\G
&&
\\[2mm]
&&
N
& \longmapsto &
\Hom_{\I}(\OO_\phi,N).
\end{array}
\]
To get a Grothendieck spectral sequence
\[
 E_2^{a,b}
  =\Ext_{\I}^{a}\bigl(\OO_\phi,H^b(\G,\A)\bigr)
  \Longrightarrow R^{a+b}(E\circ F)(\A),
\]
we need to show that  $F$ must
send injective $\I[[\G]]$-modules to $E$-acyclic objects. Take an injective $\I[[\G]]$-module $M$.  Let $J$ be the kernel of the map $\I[[\G]]\to \I$; we have $M^\G=M[J]$. By \cite[ \S 3, Exercise 28(1)]{lam2012lectures},
$M[J]$ is an injective $\I[\G]/J\simeq\I$-module.

The associated five-term exact sequence in low degrees is
\begin{equation}\label{first exact five}
 \begin{aligned}
  0\longrightarrow\Ext_{\I}^{1}(\OO_\phi,\A^\G)
  &\longrightarrow R^1(E\circ F)(\A)
   \longrightarrow \Hom_\I(\OO_\phi,H^1(\G,\A))\\
  &\longrightarrow\Ext_{\I}^{2}(\OO_\phi,\A^\G)
   \longrightarrow R^2(E\circ F)(\A).
\end{aligned}   
\end{equation}

To compute $R^i(E\circ F)(\A)$, we use another Grothendieck spectral sequence. Consider the following two left-exact functors. 
\[
\begin{array}{ccccc}
\{\I[[\G]]\text{-modules}\}
& \xrightarrow{\ E_1\ } &
\{\OO_\phi[[\G]]\text{-modules}\}
& \xrightarrow{\ F_1\ } &
\{\OO_\phi\text{-modules}\}
\\[2mm]
M
& \longmapsto &
\Hom_\I(\OO_\phi,M)
\\[2mm]
&&
N
& \longmapsto &
N^\G.
\end{array}
\]
   Notice that the functor $E_1$ is right adjoint to the restriction of scalars functor $\Res$ from the category of $\OO_\phi[[\G]]$-modules to the category of $\I[[\G]]$-modules. In other words, we have $\Hom_{\OO_\phi[[\G]]}(N, \Hom_\I(\OO_\phi,M))\cong \Hom_{\I[[\G]]}(\Res N,M)$. It is known that the right adjoint of an exact functor preserves injectives. Hence, the functor $E_1$ maps injectives to injectives, which are of course $F_1$-acyclic.

Notice that $E_1(M)=\Hom_\I(\OO_\phi,M)=\Hom_{\I[[\G]]}(\OO_\phi[[\G]],M)$. Hence $R^iE_1(M)=\Ext^i_{\I[[\G]]}(\OO_\phi[[\G]],M)$. Let $P_\cdot$ be a finite free resolution of $\mathcal{O}_\phi$ as an $\I$-module. Then $P_\cdot \otimes_\I \I[[\G]]$ is a finite free resolution of $\mathcal{O}_\phi \otimes_\I \I[[\G]] = \mathcal{O}_\phi[[\G]]$ as an $\I[[\G]]$-module since $\I[[\G]]$ is flat over $\I$. Hence, $R^iE_1(M)=H^i(\Hom_{\I[[\G]]}(P_\cdot \otimes_\I \I[[\G]], M))=H^i(\Hom_\I(P_\cdot,M))=\Ext^i_\I(\OO_\phi,M) $.
We get
\[
  {}'E_2^{a,b}
  =H^a\bigl(\G,\Ext_{\I}^{b}(\OO_\phi,\A)\bigr)
  \Longrightarrow R^{a+b}(F_1\circ E_1)(\A).
\]
One can check $\I^{\vee}$ is an injective
$\I$-module by Baer's criterion. Since $\mathbb{T}$ is a finite free $\I$-module, $\A=\mathbb{T}\otimes_\I \I^\vee$ is injective over $\I$. Therefore, $\Ext_{\I}^{b}(\OO_\phi,\A)=0$ for $b\geq 1$. The spectral sequence degenerates and gives us
\[
H^n(\G,  \Hom_{\I}(\OO_\phi,\A) )\cong R^n(F_1\circ E_1)(\A).
\]
Notice that $E\circ F(M)=\Hom_{\I}(\OO_\phi,M^\G)=\Hom_{\I}(\OO_\phi,M)^\G=F_1\circ E_1(M)$. Hence, $R^n(F_1\circ E_1)(\A)=R^n(E\circ F)(\A)$. Combined with \eqref{first exact five}, we get the conclusion. 
\end{proof}
\begin{lemma}\label{generic torsion}
    With the same setup as in Lemma \ref{five exact}, for $i\geq 1$, the dual module $\Ext_{\I}^{i}(\OO_\phi,\A^\G)^\vee $ is a finitely generated torsion  $\OO_\phi$-module for Zariski generic specializations $\phi$ .
\end{lemma}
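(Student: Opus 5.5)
We must show: for $i\ge1$, $\Ext^i_\I(\OO_\phi,\A^\G)^\vee$ is a finitely generated torsion $\OO_\phi$-module for $\phi$ with $\PP_\phi$ in a dense open subset of $\Spec(\I)$. We will pass to Pontryagin duals and work with compact modules. Set $X:=(\A^\G)^\vee$. Since $\A=\mathbb T\otimes_\I\I^\vee$ is cofinitely generated over $\I$, its dual $\A^\vee\cong \Hom_\I(\mathbb T,\I)$ is finite free over $\I$. Because $\A^\G\subseteq\A$, $X$ is a quotient of $\A^\vee$ and hence a finitely generated $\I$-module. Matlis/Pontryagin duality over the complete local ring $\I$ exchanges $\Ext$ with $\Tor$:
\[
\Ext^i_\I(\OO_\phi,\A^\G)^\vee\cong \Tor_i^\I(\OO_\phi,X).
\]
To justify this, we compute $\Ext$ using a finite free resolution $P_\cdot\to\OO_\phi$; $\I$ is regular, so such a resolution exists, and $\OO_\phi$ is finite over $\I$. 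Then $\Hom_\I(P_\cdot,\A^\G)^\vee\cong P_\cdot\otimes_\I X$, and since Pontryagin duality is exact, the cohomology dualizes. Finite generation over $\OO_\phi$ is then automatic: each $\Tor_i^\I(\OO_\phi,X)$ is a finitely generated $\I$-module, it is an $\OO_\phi$-module, and $\OO_\phi$ is finite over $\I$.

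The key step is torsionness, which will come from generic freeness of $X$. Let $\mathfrak a:=\Ann_\I(X_{\mathrm{tors}})$ if needed, but more simply, let $U$ be the set of primes $\PP$ with $X_\PP$ free over $\I_\PP$. Because $X$ is finitely generated over the Noetherian domain $\I$, $U$ is open, and it contains the generic point $(0)$. It is therefore a nonempty, hence dense, open subset, the complement of $\Supp$ of the relevant Fitting-ideal quotient. Now take $\phi$ with $\PP_\phi\in U$. We want $\Tor_i^\I(\OO_\phi,X)\otimes_{\OO_\phi}\mathrm{Frac}(\OO_\phi)=0$. Let $F=\mathrm{Frac}(\OO_\phi)$. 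Since $\I\to\OO_\phi$ factors through $\I/\PP_\phi\hookrightarrow\OO_\phi$ and $F$ is flat over $\OO_\phi$, we get $\Tor_i^\I(\OO_\phi,X)\otimes_{\OO_\phi}F\cong\Tor_i^\I(F,X)$. This is in turn $\Tor_i^{\I_{\PP_\phi}}(F,X_{\PP_\phi})$, because $F$ is an $\I_{\PP_\phi}$-module (elements of $\I\setminus\PP_\phi$ map to nonzero elements of the domain $\OO_\phi$). That last group vanishes for $i\ge1$ because $X_{\PP_\phi}$ is free. Hence $\Tor_i^\I(\OO_\phi,X)$ is $\OO_\phi$-torsion for every $\phi$ with $\PP_\phi\in U$.

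The step I expect to need the most care is the duality identification: $\A^\G$ is a discrete, possibly non-cofinitely-presented-looking module, and we must check that continuous duals behave well. This is fine because $\A^\G$ is a closed submodule of a cofinitely generated $\I$-module, so everything lives in the Pontryagin-dual category of finitely generated compact $\I$-modules, where $\Hom_\I(P,-)$ with $P$ finite free dualizes to $P\otimes_\I-$. A secondary point is that the open set $U$ is independent of $i$, which holds by construction. Finally, if the statement intends $\Ext^i$ to include the case where $\A^\G$ is replaced by local invariants $\A^{G_{L_w}}$, as used in the proof of Theorem \ref{generic control}, the same argument applies verbatim with $\G=G_{L_w}$, giving one dense open set per module.
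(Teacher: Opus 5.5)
Your proposal is correct and follows essentially the same route as the paper. Both arguments dualize to $\Tor_i^\I(\OO_\phi,(\A^\G)^\vee)$, invoke generic freeness of the finitely generated module $(\A^\G)^\vee$, and tensor with the fraction field of $\OO_\phi$ to kill $\Tor_i$ for $i\ge 1$. The differences are cosmetic: you verify the $\Ext$–$\Tor$ duality directly via a free resolution of $\OO_\phi$ where the paper cites Fieldhouse, and you localize at $\PP_\phi$ in the open free locus instead of inverting a single element $a$ with $((\A^\G)^\vee)_a$ free.
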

\begin{proof}
By \cite[\S 3]{Fieldhouse1975}, we have $\Ext_{\I}^{i}(\OO_\phi,\A^\G)^\vee\cong \Tor^\I_i(\OO_\phi, (\A^\G)^\vee)$. Both modules in the Tor groups are finitely generated over $\I$, hence $ \Tor^\I_i(\OO_\phi, (\A^\G)^\vee)$ is finitely generated over $\I$ and therefore is finitely generated over $\OO_\phi$.

Since $\I$ is a domain and $ (\A^\G)^\vee$ is a finitely generated $\I$-module, by \cite[\href{https://stacks.math.columbia.edu/tag/051S}{Lemma 051S}]{stacks-project}, there exists an element $a\in \I$ such that the localization  $((\A^\G)^\vee)_a$ is free over $\I_a$.  Take a specialization $\phi$ such that $\phi(a)\neq 0$. 
Let $K_\phi$ be the fraction field of $\OO_\phi$. We have a natural map $\I_a\to K_\phi $ as $\phi(a)\neq 0$.

Let $P_\cdot\to (\A^\G)^\vee $ be a finitely generated free resolution of $ (\A^\G)^\vee$ as an $\I$-module. We have
\begin{align*}
\Tor^\I_i(\OO_\phi, (\A^\G)^\vee)\otimes_{\OO_\phi} K_\phi =H_i(P_\cdot\otimes_\I \OO_\phi) \otimes_{\OO_\phi} K_\phi
  &\cong H_i(P_\cdot\otimes_\I \OO_\phi \otimes_{\OO_\phi} K_\phi) \\
  &= H_i(P_\cdot\otimes_\I \I_a\otimes_{\I_a} K_\phi) \\
  &= H_i((P_\cdot)_a\otimes_{\I_a} K_\phi) \\
  &\cong \Tor^{\I_a}_i( K_\phi, ((\A^\G)^\vee)_a)=0
\end{align*}
The first isomorphism is because $K_\phi$ is flat over $\OO_\phi$, hence tensoring with it does not change the exactness. The second isomorphism is because $(P_\cdot)_a $ is a free resolution of $((\A^\G)^\vee)_a$ as an $\I_a$-module since localization preserves exactness. The last equality holds because $((\A^\G)^\vee)_a$ is free as an $\I_a$-module. Therefore, 
$\Ext_{\I}^{i}(\OO_\phi,\A^\G)^\vee$ is a torsion $\OO_\phi$-module when $\phi(a)\neq 0$. Such $\phi$ are Zariski generic.

\end{proof}
Next, we deduce a generic quotient theorem to compare  $Y_S(\mathbb A/L)\otimes_\I \OO_\phi$ with  $Y_S(\mathbb A/L)$.

\begin{theorem}\label{generic quotient}
      Assume $\I$ is a commutative complete regular local ring of characteristic zero with a finite residue field of characteristic $p$ and $G$ is a compact pro $p$-adic analytic group without $p$-torsion. Let $H$ be a closed normal subgroup of $G$ such that $G/H\cong \Z_p$. 

      Let $M$ be a pseudo-null $\I[[G]]$-module and assume $M$ is finitely generated over $\I[[H]]$. Then $M_\phi:=M\otimes_\I \OO_\phi$ is a pseudo-null $\OO_\phi[[G]]$-module for Zariski generic specializations $\phi$.
\end{theorem}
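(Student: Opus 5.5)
The plan is to reduce everything to torsion questions over the coefficient ring, using Remark~\ref{skew}. Since $M$ is finitely generated over $\I[[H]]$ and $\I[[G]]$ is a skew power series ring over $\I[[H]]$, pseudo-nullity of $M$ over $\I[[G]]$ is equivalent to $M$ being a torsion $\I[[H]]$-module. Likewise, $M_\phi=M\otimes_\I\OO_\phi$ is finitely generated over $\OO_\phi[[H]]$, because $\OO_\phi$ is finite over $\I$. Moreover, $\OO_\phi[[G]]$ is a skew power series ring over $\OO_\phi[[H]]$, by \cite[Section 4]{SchneiderVenjakob2006} applied to $\OO_\phi$, which is again a complete regular local ring of the required type. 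It therefore suffices to show that $M_\phi$ is torsion over $\OO_\phi[[H]]$ for Zariski generic $\phi$. Equivalently, since $\OO_\phi[[H]]$ is an Auslander regular domain by Remark~\ref{grade regular}, we need $j_{\OO_\phi[[H]]}(M_\phi)\geq 1$.

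To get there, I would trade the $G$-structure for $H$ and work entirely over $\Lambda_H:=\I[[H]]$, which is an Auslander regular Noetherian domain. Since $M$ is torsion over $\Lambda_H$, we have $\Hom_{\Lambda_H}(M,\Lambda_H)=0$, and there is an annihilating structure to exploit. In the commutative case one simply chooses a nonzero $\lambda\in\Ann_{\Lambda_H}(M)$. In general, I would use an Ore-set argument. Take a finite presentation $\Lambda_H^m\xrightarrow{A}\Lambda_H^n\to M\to 0$. Torsion means that $M\otimes_{\Lambda_H}Q(\Lambda_H)=0$, where $Q$ is the skew field of fractions, so $A$ has full row rank $n$ over $Q(\Lambda_H)$. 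Concretely, there are a matrix $B$ and a nonzero element $c\in\Lambda_H$ such that $AB=c\cdot I_n$ up to the appropriate side conventions; this is the matrix form of "$M$ is killed after inverting a regular element".

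Next I would specialize. Tensoring the presentation with $\OO_\phi$ over $\I$ gives a presentation $\OO_\phi[[H]]^m\xrightarrow{A_\phi}\OO_\phi[[H]]^n\to M_\phi\to0$, and $A_\phi B_\phi=c_\phi I_n$, where $c_\phi$ is the image of $c$ in $\OO_\phi[[H]]$. Whenever $c_\phi\neq 0$, the same identity shows that $A_\phi$ has full rank over $Q(\OO_\phi[[H]])$, which is a skew field since $\OO_\phi[[H]]$ is a domain. Hence $M_\phi\otimes Q(\OO_\phi[[H]])=0$, i.e. $M_\phi$ is $\OO_\phi[[H]]$-torsion, and so pseudo-null over $\OO_\phi[[G]]$.

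It remains to show that $c_\phi\neq0$ holds for Zariski generic $\phi$. Write $c=\sum_\sigma a_\sigma\sigma$ in the Mahler or coordinate expansion of $\I[[H]]\cong\I\widehat\otimes\Z_p[[H]]$ as an $\I$-module; this module is a product of copies of $\I$ via a topological basis. Some coefficient $a_{\sigma_0}\in\I$ is nonzero, and if $\phi(a_{\sigma_0})\neq0$ then $c_\phi\neq0$. Here one uses that $\OO_\phi[[H]]=\OO_\phi\widehat\otimes_\I\I[[H]]$ has the same coordinate description and that $\I/\PP_\phi\hookrightarrow\OO_\phi$ is injective. The open set $U=D(a_{\sigma_0})$ is dense since $\I$ is a domain.

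I expect the main obstacle to be the noncommutative rank step: producing $B$ and $c$ with $AB=cI_n$ (or the correct one-sided version) from torsionness via the Ore condition, and checking that it survives base change. If this becomes delicate, my fallback is to compute grades via $\Ext$. Using the spectral sequence/change-of-rings isomorphism $\Ext^0_{\OO_\phi[[H]]}(M_\phi,\OO_\phi[[H]])$ against $\Tor^\I_\ast(\OO_\phi,\Ext^\ast_{\Lambda_H}(M,\Lambda_H))$, I would argue, as in Lemma~\ref{generic torsion}, that the relevant $\Tor$ terms vanish generically after making the finitely many $\Ext$-modules generically free over $\I$.
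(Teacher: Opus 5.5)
Your proposal is correct and follows essentially the same route as the paper. Both arguments use Schneider--Venjakob to reduce to showing that $M_\phi$ is $\OO_\phi[[H]]$-torsion, take nonzero annihilators in $\I[[H]]$ of generators of $M$, pick a nonzero $\I$-coefficient, and specialize away from its zero locus so the annihilators stay nonzero in the domain $\OO_\phi[[H]]$.

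The paper is somewhat leaner in two places. It uses a separate annihilator $s_i$ for each generator $m_i$ and the product $a=\prod c_i$ of chosen coefficients, so no presentation matrix or Ore common multiple is needed; the step you flagged as the main obstacle disappears. It also reads off coefficients in the finite-level ring $\I[H/U]$ rather than in a Lazard/Mahler basis of $\I[[H]]$.
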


\begin{proof}

By Remark \ref{grade regular}, $M$ is a pseudo-null $\I[[G]]$-module if and only if $j_{\I[[G]]}(M)\geq 2$. By \cite[Prop 3.2]{SchneiderVenjakob2006}, we have $j_{\I[[H]]}(M)=j_{\I[[G]]}(M)-1\geq 1 $. Conversely, to show $M_\phi$ is a pseudo-null $\OO_\phi[[G]]$-module, we only need to show $j_{\OO_\phi[[H]]}(M_\phi)\geq 1$. In other words, we only need to show $\Hom_{\OO_\phi[[H]]}(M_\phi,\OO_\phi[[H]])=0$.

By \cite[Prop 3.6(i)]{Venjakob2002}, we have $ j_{\I[[H]]}(M')\geq j_{\I[[H]]}(M)\geq 1$ for any submodule $M'\subset M$. 
 Let $m_1,\cdots, m_s$ be generators of $M$ as an $\I[[H]]$-module. There exists $s_i\in \I[[H]]$ for each $m_i$ such that $s_im_i=0$.  Otherwise, we have an isomorphism $ \I[[H]]\to\I[[H]]m_i$ by sending $b\to bm_i$. This contradicts the fact that $j_{\I[[H]]}(\I[[H]]m_i)\geq 1$ since  $\I[[H]]$ is a submodule of $M$.

By definition, we have $\I[[H]]=\varprojlim \I[H/U] $ where $U$ is open in $H$. Take $U$ such that $\pi_U(s_i)\neq 0$ for all $s_i$, where $\pi_U: \I[[H]]\to \I[H/U]$. Write $\pi_u(s_i)=\sum_{\bar{h}\in H/U} c_{i,\bar{h}} \bar{h}$. There is at least one nonzero coefficient $c_{i,\bar{h}}$, denoted by $c_i$. Put $a=\prod_i c_i$. Take a specialization $\phi$ such that $\phi(a)\neq 0$. Then the image of $s_i$ in $\OO_\phi[[H]] $ is nonzero. 

Take $f\in \Hom_{\OO_\phi[[H]]}(M_\phi,\OO_\phi[[H]])$. By Remark \ref{grade regular}, we also know $\OO_\phi[[H]]$ is a domain.  Notice that $m_1,\cdots, m_s$ are generators of $M_\phi$ as an $\OO_\phi[[H]]$-module. $0=f(s_im_i)=s_if(m_i)$ tells us that $f(m_i)=0$. Hence, $f=0$. Therefore, $\Hom_{\OO_\phi[[H]]}(M_\phi,\OO_\phi[[H]])=0$ for any specialization $\phi$ such that $\phi(a)\neq 0$.
\end{proof}

\begin{proof}[Proof of Theorem \ref{generalize Jha}]
    By Theorem \ref{generic quotient}, the module $Y_S(\mathbb A/L)\otimes_\I \OO_\phi$ is pseudo-null over $\OO_\phi[[G]]$ for Zariski generic specializations $\phi$. 
    By Theorem \ref{generic control}, $Y_S(\mathbb A_\phi/L) $ is pseudo-null over $\OO_\phi[[G]]$ for Zariski generic specializations $\phi$.
\end{proof}

Although the main goal of this paper is to study the descent of pseudo-nullity, for comparison with \cite[Theorem~10]{Jha2012}, we also prove the complementary ascent result.

\begin{lemma}\label{lemma complementary}
          Assume $\I$ is a commutative complete regular local ring of characteristic zero with a finite residue field of characteristic $p$ and $G$ is a compact pro $p$-adic analytic group without $p$-torsion. Let $H$ be a closed normal subgroup of $G$ such that $G/H\cong \Z_p$. 

            Assume $\I[[G]]$-module $M$ is finitely generated over $\I[[H]]$. If $M_{\phi_0}:=M\otimes_\I \OO_{\phi_0}$ is a pseudo-null $\OO_{\phi_0}[[G]]$-module for a specializations $\phi_0$, then  $M$ be a pseudo-null $\I[[G]]$-module.
\end{lemma}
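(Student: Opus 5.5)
By Remark~\ref{skew}, applied once over $\I[[G]]\supset\I[[H]]$ and once over $\OO_{\phi_0}[[G]]\supset\OO_{\phi_0}[[H]]$, the statement reduces to a torsion statement over the smaller rings. Since $M$ is finitely generated over $\I[[H]]$, the module $M_{\phi_0}$ is finitely generated over $\OO_{\phi_0}[[H]]$. So the hypothesis says that $M_{\phi_0}$ is torsion over $\OO_{\phi_0}[[H]]$, and the claim is that $M$ is torsion over $\I[[H]]$.

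By Remark~\ref{grade regular}, applied with $H$ in place of $G$, both $\I[[H]]$ and $\OO_{\phi_0}[[H]]$ are Auslander regular Noetherian domains. Hence they are Ore domains with skew fields of fractions, and a finitely generated module is torsion if and only if its rank, the dimension of $M\otimes Q(\I[[H]])$, vanishes. I will therefore aim for the inequality
\[
\operatorname{rank}_{\I[[H]]}M\;\le\;\operatorname{rank}_{\OO_{\phi_0}[[H]]}M_{\phi_0},
\]
which is lower semicontinuity of rank under specialization. The right side is $0$, so the left side is $0$.

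To prove the inequality I would reduce to commutative coefficient rings via coinvariants. Choose a uniform open normal subgroup $H_0\subset H$ and its lower $p$-series $H_n$. For each $n$, the coinvariants $M_{H_n}$ form a finitely generated $\I$-module, and $(M_{\phi_0})_{H_n}=M_{H_n}\otimes_\I\OO_{\phi_0}$ because tensor products commute with coinvariants. For a finitely generated module $N$ over the domain $\I$ and the map $\I\to\OO_{\phi_0}\to K_{\phi_0}$, the fibre dimension satisfies
\[
\dim_{K_{\phi_0}}(N\otimes_\I K_{\phi_0})\;\ge\;\operatorname{rank}_\I N .
\]
This is the upper semicontinuity of the number of generators of a free presentation, seen via Fitting ideals: a nonvanishing minor of the presentation matrix over $\I$ need not stay nonzero after specialization, so the fibre rank can only go up. Hence
\[
\operatorname{rank}_\I M_{H_n}\le\operatorname{rank}_{\OO_{\phi_0}}(M_{\phi_0})_{H_n}
\]
for all $n$. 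Dividing by $[H:H_n]$ and passing to the limit, a Harris-type asymptotic formula
\[
\operatorname{rank}_{\I[[H]]}M=\lim_n\frac{\operatorname{rank}_\I M_{H_n}}{[H:H_n]},
\]
and its analogue over $\OO_{\phi_0}$, gives the desired inequality.

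The main obstacle is this asymptotic rank formula over a general complete regular local coefficient ring $\I$; it is classical for $\Z_p[[H]]$ with $H$ uniform. I would try to deduce it from the $\Z_p$ case. Since $\I$ is finite over a power series ring $\Z_p[[X_1,\dots,X_r]]$ (or a ramified variant) by Cohen's structure theorem, rank over $\I[[H]]$ can be compared with rank over $\Z_p[[\Z_p^r\times H]]$, and that group is again without $p$-torsion.

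If this route is too heavy, I would use a direct determinantal fallback. Take a finite presentation $\I[[H]]^a\xrightarrow{A}\I[[H]]^b\to M\to0$, reduce $A$ modulo the open subgroups $U$ as in the proof of Theorem~\ref{generic quotient}, and compare ranks of the finite matrices over $\I[H/U]$ and $\OO_{\phi_0}[H/U]$. Over the finite group rings the rank inequality is just the commutative minor argument, applied after tensoring with the fraction field of $\I$ and choosing a commutative maximal subfield splitting $\I[H/U]$.
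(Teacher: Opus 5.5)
Your reduction to the inequality $\operatorname{rank}_{\I[[H]]}M\le\operatorname{rank}_{\OO_{\phi_0}[[H]]}M_{\phi_0}$ is the right target. The level-$n$ comparison $\operatorname{rank}_\I M_{H_n}\le\operatorname{rank}_{\OO_{\phi_0}}(M_{\phi_0})_{H_n}$ is also correct. The gap is that everything then rests on the Harris-type formula $\operatorname{rank}_{\I[[H]]}M=\lim_n\operatorname{rank}_\I M_{H_n}/[H:H_n]$, which you need over both $\I$ and $\OO_{\phi_0}$. Its real content is that $\operatorname{rank}_R N_{H_n}=o([H:H_n])$ for every finitely generated torsion $R[[H]]$-module $N$, with $R$ an arbitrary complete regular local ring. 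Equivalently, the higher terms in $\sum_i(-1)^i\operatorname{rank}_R H_i(H_n,N)=0$ must be negligible. You do not prove this, and neither suggested route supplies it.

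Passing from $\I$ to $A=\Z_p[[X_1,\dots,X_r]]=\Z_p[[\Z_p^r]]$ is harmless. But Harris's theorem for $\Z_p^r\times H$ controls $\Z_p$-ranks of coinvariants along the lower $p$-series of that product, where both factors shrink together. By contrast, $\operatorname{rank}_A M_{H_n}$ is a limit in which only the $\Z_p^r$ factor shrinks while $H_n$ stays fixed. Comparing the two requires error bounds uniform in both indices, and Harris's bound does not give them. Your determinantal fallback is the same computation in another guise. The rank over $\operatorname{Frac}(\I)$ of the reduced presentation matrix at level $U$ equals $b[H:U]-\operatorname{rank}_\I M_U$, so relating it to rank over the skew field of $\I[[H]]$ is again the asymptotic formula.

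The missing idea is that no asymptotics are needed if you specialize along one regular element at a time. Take $x\in\I$ such that $\I/x$ is again complete regular local of characteristic zero. Apply the snake lemma for multiplication by $x$ along short exact sequences, use additivity of rank, and take a finite free resolution of $M$ over $\I[[H]]$. This gives
$\operatorname{rank}_{\I[[H]]}M=\operatorname{rank}_{(\I/x)[[H]]}(M/xM)-\operatorname{rank}_{(\I/x)[[H]]}(M[x])$,
so torsion of $M/xM$ forces torsion of $M$. This is the content of \cite[Corollary~4.13]{LimFineSelmer}, which the paper invokes.

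Since $\phi_0(\I)$ need not be regular and $\phi_0$ need not be surjective, the paper first takes a Cohen factorization $\I\to\mathbb J\twoheadrightarrow\OO_{\phi_0}$, with $\I\to\mathbb J$ flat with regular closed fibre. Then $\mathbb J$ is complete regular, and the kernel of $\mathbb J\to\OO_{\phi_0}$ is generated by part of a regular system of parameters. Induction along these parameters shows $M\widehat\otimes_\I\mathbb J$ is torsion over $\mathbb J[[H]]$. The injection $\I[[H]]\hookrightarrow\mathbb J[[H]]$ together with adjunction then yields $\Hom_{\I[[H]]}(M,\I[[H]])=0$.

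If the asymptotic formula were available, your coinvariant approach would handle an arbitrary $\phi_0$ in one stroke without Cohen factorization. As written, however, its load-bearing step is missing.
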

\begin{proof}
By remark \ref{skew}, we need to show $M$ is torsion $\I[[H]]$-module. 
\cite[Corollary 4.13]{LimFineSelmer} tells us that if the ring $R$ and its quotient $R/(x)$ for an element $x\in R$ both are complete regular local ring and $M$ is finitely generated over $R[[H]]$,  then the quotient $M/xM$ is torsion over $(R/(x))[[H]]$ implies that $M$ is torsion over $R[[H]]$. 

However, our image of the quotient $\phi_0(\I)$ may not be regular. Fortunately, by the Cohen factorization theorem
\cite[Theorem~1.1]{AvramovFoxbyHerzog1994}, the local homomorphism
$\phi:\mathbb I\longrightarrow\mathcal O_\phi$
admits a factorization
\[
\mathbb I\longrightarrow\mathbb J
\longrightarrow\mathcal O_\phi,
\]
where $\mathbb J$ is a complete Noetherian local ring,
$\mathbb I\to\mathbb J$ is flat with regular closed fiber, and
$\mathbb J\to\mathcal O_\phi$ is surjective. By \cite[\href{https://stacks.math.columbia.edu/tag/031E}{Lemma 031E}]{stacks-project} and $\I$ is regular, we know that $\mathbb{J}$ is complete regular local ring. Since the second map is surjective between regular local rings, by \cite[\href{https://stacks.math.columbia.edu/tag/00NR}{Lemma 00NR}]{stacks-project} and \cite[\href{https://stacks.math.columbia.edu/tag/00NQ}{Lemma 00NQ}]{stacks-project}, we can choose parameters $x_1,x_2,\cdots,x_r\in\ker(\mathbb J\to\mathcal O_\phi)$ such that each quotient $\mathbb{J}_i=\mathbb{J}/(x_1,x_2,\cdots,x_i)$ is complete regular local ring. Hence, by applied \cite[Corollary 4.13]{LimFineSelmer} inductively, we know that $M\widehat\otimes_\I \mathbb{J}$ is a torsion $\mathbb{J}[[H]]$-module.

Since $\mathbb I\to\mathbb J$ is a flat local homomorphism,
it is automatically faithfully flat. In particular, $\mathbb I\to\mathbb J$ is injective. We have $\Hom_{\I[[H]]}(M,\I[[H]])\hookrightarrow \Hom_{\I[[H]]}(M, \mathbb{J}[[H]])\cong \Hom_{\mathbb{J}[[H]]}(M\widehat\otimes_\I \mathbb{J},\mathbb{J}[[H]])=0 $, where the second isomorphism is by adjunction. Therefore, $M$ is a torsion $\I[[H]]$-module and hence, a pseudo-null $\I[[G]]$-module.

\end{proof}

\begin{remark}
The implication $(3)\Rightarrow(1)$ in \cite[Theorem~10]{Jha2012} shows, under certain hypotheses, that if the dual fine Selmer group $Y_S(\mathbb A_{\phi_0}/L)$
is pseudo-null over $\OO_{\phi_0}[[G]]$ for some specialization $\phi_0$, then $Y_S(\mathbb A/L)$
is pseudo-null over $\I[[G]]$. Lemma~\ref{lemma complementary} shows that Jha's additional hypotheses are not needed for the purely algebraic step of lifting pseudo-nullity from a specialization to the big module. As explained in Remark~\ref{jha comment}, the obstruction to removing $(\mathrm{Dim}_S)$ instead occurs in the control theorem comparing
$Y_S(\mathbb A/L)\otimes_{\I}\OO_{\phi_0}$ and $ 
Y_S(\mathbb A_{\phi_0}/L)$.
However, this does not imply that the implication $(3)\Rightarrow(1)$ in
\cite[Theorem~10]{Jha2012} fails without $(\mathrm{Dim}_S)$. Indeed, even if $Y_S(\mathbb A/L)\otimes_{\I}\OO_{\phi_0}$ is not pseudo-null, it is still possible that $Y_S(\mathbb A/L)$ is pseudo-null over $\I[[G]]$. Therefore, proving the general implication $(3)\Rightarrow(1)$ without $(\mathrm{Dim}_S)$ requires a different argument.
\end{remark}
Theorem \ref{generalize Jha} is the generalization of implication $(1)\Rightarrow(2)$ in \cite[Theorem~10]{Jha2012}. Now we obtain the converse direction as a corollary of Lemma \ref{lemma complementary}. 
\begin{corollary}\label{main converse}
        Let $ L/K$ be a Galois extension inside $G_{K,S}$ containing the cyclotomic $\Z_p$-extension $K^{cyc}/K$ such that $G:=\Gal(L/K)$ is a compact pro $p$-adic Lie group without $p$-torsion. Denote $H:=\Gal(L/K^{cyc})$.

Assume the Iwasawa module of the dual fine Selmer group $Y_S(\mathbb A/L)$ is finitely generated over $\I[[H]]$. 
 Assume there exists a Zariski dense subset $U\subset \Spec(\I)$ such that for any specialization $\phi$ with $\PP_\phi=\ker\phi\in U $, the Iwasawa module of the dual fine Selmer group $Y_S(\mathbb{A}_\phi/L)$ is pseudo-null over $\OO_\phi[[G]]$.  Then  $Y_S(\mathbb A/L)$ is pseudo-null over $\I[[G]]$.
\end{corollary}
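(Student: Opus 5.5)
The plan is to reduce Corollary~\ref{main converse} to Lemma~\ref{lemma complementary} applied to the module $M:=Y_S(\A/L)$. By hypothesis $M$ is finitely generated over $\I[[H]]$. So it suffices to exhibit a single specialization $\phi_0$ for which $M\otimes_\I\OO_{\phi_0}$ is pseudo-null over $\OO_{\phi_0}[[G]]$. We do not know $M\otimes_\I\OO_\phi$ directly, only $Y_S(\A_\phi/L)$ for $\PP_\phi\in U$. The bridge between the two is the generic control theorem, Theorem~\ref{generic control}, which does not assume pseudo-nullity of $Y_S(\A/L)$. It gives a nonempty Zariski open $V\subset\Spec(\I)$ such that for every $\phi$ with $\PP_\phi\in V$, the natural map
\[
Y_S(\A/L)\otimes_\I\OO_\phi\longrightarrow Y_S(\A_\phi/L)
\]
has pseudo-null kernel and cokernel over $\OO_\phi[[G]]$.

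First, since $U$ is Zariski dense and $V$ is a nonempty open subset of $\Spec(\I)$, we get $U\cap V\neq\emptyset$. We choose $\PP\in U\cap V$ and a specialization $\phi_0$ with $\PP_{\phi_0}=\PP$. One must make sure such a specialization exists. The cleanest reading of the hypothesis is that $U$ consists of primes realized as kernels of specializations; otherwise we take $\OO_{\phi_0}$ to be the normalization of $\I/\PP$, or a regular finite extension of it, as allowed by the definitions. I expect this bookkeeping to be the only delicate point. One also needs the open set produced in the proof of Theorem~\ref{generic control} (primes not containing the element $a$ of Lemma~\ref{generic torsion}) to be nonempty, which holds because $a\neq0$.

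Second, for this $\phi_0$ the target $Y_S(\A_{\phi_0}/L)$ is pseudo-null, and the kernel $A$ and cokernel $B$ of the comparison map are pseudo-null. From the exact sequences
\[
0\to A\to M_{\phi_0}\to I\to 0,\qquad 0\to I\to Y_S(\A_{\phi_0}/L)\to B\to 0,
\]
Lemma~\ref{analog exact} gives first that $I$ is pseudo-null, as a submodule of a pseudo-null module, and then that $M_{\phi_0}$ is pseudo-null, as an extension of pseudo-null modules. Alternatively, finite generation together with the grade additivity of Remark~\ref{grade regular} over the Auslander regular ring $\OO_{\phi_0}[[G]]$ gives the same conclusion.

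Finally, we apply Lemma~\ref{lemma complementary} to $M$ and $\phi_0$ to conclude that $Y_S(\A/L)$ is pseudo-null over $\I[[G]]$.
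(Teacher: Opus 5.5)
Your proposal is correct and follows the paper's proof. Like the paper, you intersect $U$ with the nonempty open set from Theorem~\ref{generic control}, use the pseudo-isomorphism $Y_S(\A/L)\otimes_\I\OO_{\phi_0}\to Y_S(\A_{\phi_0}/L)$ with Lemma~\ref{analog exact} to get pseudo-nullity of $Y_S(\A/L)\otimes_\I\OO_{\phi_0}$, and conclude by Lemma~\ref{lemma complementary}, filling in details the paper leaves implicit. On the delicate point you flag, the paper also silently assumes that $U$ consists of kernels of specializations, and that reading is the one to adopt: your fallback does not work in general, because the normalization of $\I/\PP$ need not be regular when $\dim \I/\PP\geq 2$.
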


\begin{proof}
   The intersection of a dense subset with a Zariski open subset in $\Spec(\I)$ is nonempty.  Theorem \ref{generic control} tells us $Y_S(\mathbb A/L)\otimes_{\I}\OO_{\phi}$  is pseudo-null over $\OO_\phi[[G]]$ for a specialization $\phi$. By Lemma \ref{lemma complementary}, $Y_S(\mathbb A/L)$ is pseudo-null over $\I[[G]]$.
\end{proof}

\subsection{Back to Hida's family}\label{back to Jha}

For the Hida family, we assume $\I=\OO[[W]]$ as in \cite{Jha2012}, where $\OO$ is the ring of integers of a local field extension over $\Q_p$ and $W$ is a variable. The kernel $\PP=\ker\xi$ of the arithmetic specialization is a height-one prime ideal in $ \OO[[W]]$ and hence is principal. We may write $\PP=(T_\xi)$.

 We can use Theorem \ref{non ht=2} to give a different proof of Jha's result and give an explicit description of the exceptional locus, with more detail if $\I[[G]]$ is abelian. 

\begin{proposition}\label{Jha}
        Assume $\I$ is a commutative complete regular local ring of characteristic zero with a finite residue field of characteristic $p$ and $G$ is a compact pro $p$-adic analytic group without $p$-torsion. Let $H$ be a closed normal subgroup of $G$ such that $G/H\cong \Z_p$. 
    
    Assume $M$ is a pseudo-null $S:=\I[[G]]$-module and is finitely generated over $R:=\I[[H]]$. Then $M/(M\PP)$ is pseudo-null over $S/(\PP S)$ for all but finitely many height-one prime ideals $\PP$ of $\I$. The exceptional locus are those for which $\Ext^2_{S}(M,S)[\PP]\neq 0$.

\end{proposition}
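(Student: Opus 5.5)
Since $\I$ is a complete regular local ring, hence a UFD, every height-one prime $\PP$ of $\I$ is principal, say $\PP=(t)$. The element $t$ is central and regular in $S=\I[[G]]$, because $S$ is a domain by Remark~\ref{grade regular}, and $S/tS\cong(\I/\PP)[[G]]$. We have $MS\PP = Mt$. We may therefore apply Theorem~\ref{non ht=2} with $n=1$ and $T=t$. The plan is to combine that theorem with the Auslander--Gorenstein property in order to reduce to the single module $\Ext^2_S(M,S)$, and then to bound its $t$-torsion over all $\PP$ at once.

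\textbf{Reduction.} By Remark~\ref{grade regular}, $S$ is Auslander regular. For a submodule $M'$ with $Mt\subseteq M'\subseteq M$, Remark~\ref{grade regular} already states that $M/Mt$ is pseudo-null if and only if $\Ext^2_S(M,S)[t]=0$. Here $M'=M$ suffices, since grades behave well in exact sequences. I would double-check this step by rerunning the proof of Theorem~\ref{non ht=2}, using $j_{S/tS}(M/Mt)\ge 1$ and the exact sequence $0\to Mt/M't\to M'/M't\to M'/Mt\to 0$. Submodules of the finitely generated module $M/Mt$ (over the Noetherian ring $S/tS$) are finitely generated, and for those the grade is the minimum over short exact sequences. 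This gives the statement that the exceptional locus is exactly $\{\PP:\Ext^2_S(M,S)[\PP]\ne 0\}$.

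\textbf{Finiteness.} Put $E=\Ext^2_S(M,S)$, a finitely generated left $S$-module on which $\I$ acts centrally. I would show that $E$ is finitely generated over $\I[[H]]$, or at least that its $\I$-torsion is controlled by finitely many height-one primes of $\I$. The cleanest approach is to consider the $\I$-submodules $E[\PP]$ as $\PP$ ranges over height-one primes. Each $E[\PP]$ is a nonzero $S$-submodule only if $\PP$ contains the annihilator in $\I$ of some nonzero element.

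Use the Noetherian property. Let $E_{\mathrm{tor}}$ be the $\I$-torsion submodule of $E$, which is an $S$-submodule and is finitely generated, say by $e_1,\dots,e_k$. Each $e_i$ is killed by some nonzero $a_i\in\I$. Then $a=\prod a_i$ kills $E_{\mathrm{tor}}$. If $E[t]\ne0$, then $E[t]\subseteq E_{\mathrm{tor}}$, so $E_{\mathrm{tor}}$ is not $t$-torsion-free. Since $a$ kills $E_{\mathrm{tor}}$, I want to conclude that $t\mid a$.

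Argue as follows. Suppose $t\nmid a$, so $(a,t)$ has height $2$. Take $0\ne e\in E[t]$. Then $\Ann_\I(e)\supseteq(a,t)$, and I need this to force a contradiction. This is the main obstacle: nothing yet rules out $\I$-annihilators of height $\ge 2$. To handle it, I would use that $\Ann_\I(E_{\mathrm{tor}})$ has finitely many minimal primes, and invoke the finitely generated $\I[[H]]$-structure of $M$. Specifically, I would use Remark~\ref{skew} and \cite[Prop 3.2]{SchneiderVenjakob2006}: $E$ is a torsion $\I[[H]]$-module related to $\Ext^1_{\I[[H]]}(M,\I[[H]])$, and the latter is finitely generated over $\I[[H]]$ and has an $\I$-primary decomposition with finitely many associated primes in $\I$. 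The $\PP$ with $E[\PP]\ne 0$ then lie among the finitely many height-one primes in $\Ass_\I$ of this module, namely the primes $\PP$ that are associated to $E$ viewed as an $\I$-module, a finite set.
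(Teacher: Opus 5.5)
\textbf{Verdict: there is a genuine gap, in the finiteness step.} Your reduction step is fine and agrees with the paper. By Remark~\ref{grade regular} (Theorem~\ref{non ht=2} together with the Auslander property), the exceptional $\PP=(t)$ are exactly those with $\Ext^2_S(M,S)[t]\neq 0$.

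The gap sits exactly at the point you call the main obstacle, and your proposed way around it does not work. Suppose $E=\Ext^2_S(M,S)$ is finitely generated and torsion over $R=\I[[H]]$, and even grant that $\Ass_\I(E)$ is finite (this needs its own argument, since $R$ is noncommutative). You still cannot conclude. From $E[\PP]\neq 0$ you only get $\PP\subseteq\mathfrak q$ for some $\mathfrak q\in\Ass_\I(E)$, not $\PP\in\Ass_\I(E)$. Your last sentence conflates these two statements. If some $\mathfrak q\in\Ass_\I(E)$ had height at least $2$, it would contain infinitely many height-one primes $\PP$, each with $E[\PP]\supseteq E[\mathfrak q]\neq 0$.

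Finite generation and torsion over $R$ cannot exclude this. For example, when $\dim\I\geq 2$, the module $R/\mathfrak{m}R$ (with $\mathfrak m$ the maximal ideal of $\I$) is finitely generated and torsion over $R$, yet it has nonzero $\PP$-torsion for every $\PP$. So you must show that no nonzero element of $E$ is killed by a height-two ideal of $\I$, and your argument gives no means of doing so.

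\textbf{The missing idea is purity.} If $j_S(M)\geq 3$, then $E=0$ and there is nothing to prove. Otherwise $j_S(M)=2$, and $E$ is pure of grade $2$ over the Auslander regular ring $S$ \cite[Prop 3.5(iii)(c)]{Venjakob2002}. So every nonzero $S$-submodule $N\subseteq E$ has $j_S(N)=2$. By \cite[Prop 3.2]{SchneiderVenjakob2006} this gives $j_R(N)=1$, that is, $\Ext^1_R(N,R)\neq 0$.

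Now suppose such an $N$ is killed by $t$ and by some $a\in\I$ with $t\nmid a$. Then $a$ is a nonzerodivisor on $R/tR=(\I/\PP)[[H]]$, so $\Hom_{R/tR}(N,R/tR)=0$. Lemma~\ref{non quotient} then gives $\Ext^1_R(N,R)=0$, which forces $N=0$.

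With this in hand, your own setup closes at once. The module $E[t]$ is an $S$-submodule of $E_{\mathrm{tor}}$, killed by both $t$ and $a$. So $E[t]\neq 0$ forces $t\mid a$, and only finitely many height-one primes divide $a$ in the UFD $\I$.

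The paper uses the same purity-plus-Rees argument, applied to $E[\PP_n]\cap(E[\PP_1]+\cdots+E[\PP_{n-1}])$. It then concludes from the resulting infinite strictly ascending chain of $R$-submodules of $E$. Once purity is in place, your single annihilator $a$ gives a slightly more explicit bound: the exceptional primes are among the prime divisors of $a$.
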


\begin{proof}
    By \cite[Section 4]{SchneiderVenjakob2006}, $S$ is a skew power series ring over $R$. If $N$ is an $S$-module and is finitely generated over $R$, then \cite[Prop 3.1]{SchneiderVenjakob2006} tells us $\Ext^i_{S}(N,S)={}^\sigma\Ext^{i-1}_{R}(N,R)$ and $\Hom_S(N,S)=0$.
    Hence, $\Ext^2_{S}(M,S)$ is also a finitely generated $R$-module.

Since $S$ is an Auslander regular ring and $M$ is pseudo-null over $S$, we have $j_S(M)\geq 2$. If $j_S(M)\geq 3$, then $E:=\Ext^2_{S}(M,S)=0$, so the conclusion holds. Now assume $j_S(M)=2$. By \cite[Prop 3.5(iii)(c)]{Venjakob2002}, we have that $E$ is pure of grade 2 over $S$. Hence, for any $S$-submodule $N\subset E$, $j_S(N)=2$. By \cite[Prop 3.2]{SchneiderVenjakob2006}, $j_R(N)=j_S(N)-1=1$. Hence, $\Ext^1_R(N,R)\neq 0$ for any nonzero $S$-submodule $N$ of $E$. 

Since a height-one prime ideal of $\I$ is principal, we can use Theorem \ref{non ht=2}. Assume there are infinitely many height-one primes $(\PP_i)=(T_i)$ of $\I$ such that $E_i:=E[\PP_i]=\Ext^2_S(M,S)[\PP_i]\neq 0$. Set $N=E_n\cap (E_1+E_2+\cdots+E_{n-1})$. We will show $N=0$. Set $Q=T_1T_2\cdots T_{n-1}$. Then $QN=0$ and $\PP_nN=0$. Since $Q\not\in \PP_n$, the image of $Q$ in $\I/\PP_n$ is a nonzero divisor. Therefore, the image of $Q$ in $R/(T_nR)$ is a nonzero divisor. We have $\Hom_{R/(T_nR)}(N,R/(T_nR))=0$. By Lemma \ref{non quotient}, we know $\Ext^1_R(N,R)\cong \Hom_{R/(T_nR)}(N,R/(T_nR))=0$. Since $\Ext^1_R(N,R)\neq 0$ for any nonzero $S$-submodule $N$ of $E$, we have $N=E_n\cap (E_1+E_2+\cdots+E_{n-1})=0$. Then 
\[
E_1\subsetneq E_1\oplus E_2\subsetneq E_1\oplus E_2\oplus E_3 \subsetneq \cdots 
\]
would be an infinite ascending chain of \(R\)-submodules of \(E\), contradicting the fact that $E$ is a finitely generated module over the Noetherian ring \(R\).

\end{proof}
\begin{remark}
   Applying Proposition \ref{Jha} to Jha's setting, we find that $Y(\mathcal{F}/G)/(T_\xi Y(\mathcal{F}/G))$ is pseudo-null for all but finitely many arithmetic specializations $\xi$, where $\ker\xi=(T_\xi)$. 
\end{remark}
Proposition \ref{Jha} also tells us that the exceptions occur at $\PP$ such that $\Ext^2_{S}(M,S)[\PP]\neq 0$. If $G\cong \Z_p^d$, $\I[[G]]$ becomes a commutative ring, and we can give a more explicit description of the exceptional locus.
\begin{corollary}
    Under the same setup as in Proposition \ref{Jha} and further assuming $G\cong \Z_p^d$, each height-2 associated prime ideal of $M$ contains at most one $\PP$ such that $M/(M\PP)$ is not pseudo-null over $S/(\PP S)$.
\end{corollary}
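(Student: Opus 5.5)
We work in the commutative setting $S=\I[[G]]$ with $G\cong\Z_p^d$. Here $\I$ is a complete regular local ring, so $\I[[\Z_p^d]]$ is again a complete regular local ring; in particular it is Noetherian, commutative and Cohen--Macaulay. Each height-one prime $\PP=(T_\PP)$ of $\I$ is principal, and $T_\PP$ is a regular element of $S$. The plan is to turn the question into one about associated primes, using Theorem~\ref{ht=2} (or Corollary~\ref{compare}). By Theorem~\ref{ht=2}, $M/\PP M$ fails to be pseudo-null over $S/\PP S$ exactly when some $P\in\Ass_S(M)$ has $\Ht_S(P)=2$ and $T_\PP\in P$, i.e.\ $\PP S\subseteq P$. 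So the corollary reduces to the following claim: a fixed height-two prime $P$ of $S$ contains $\PP S$ for at most one height-one prime $\PP$ of $\I$.

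To prove the claim, suppose $\PP_1\neq\PP_2$ are distinct height-one primes of $\I$ with $\PP_1S+\PP_2S\subseteq P$. Contract to $\I$: the prime $P\cap\I$ contains $\PP_1+\PP_2$. Since $\PP_1$ and $\PP_2$ are distinct height-one primes, $\PP_1+\PP_2$ is not contained in any height-one prime, so $\Ht_\I(P\cap\I)\geq 2$. We now want a contradiction with $\Ht_S(P)=2$. The extension $\I\to S=\I[[G]]$ is flat, since it is a power series extension $\I[[X_1,\dots,X_d]]$. For a flat local extension (localize at $P$) the going-down property and the dimension formula give
\[
\Ht_S(P)=\Ht_\I(P\cap\I)+\dim\bigl(S_P/(P\cap\I)S_P\bigr)\geq \Ht_\I(P\cap\I).
\]
This alone only yields $\Ht_S(P)\geq 2$, which is not yet a contradiction. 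The sharper point is that $T_1,T_2$ form a regular sequence in the UFD $\I$, because they are non-associate primes. By flatness they remain a regular sequence in $S$, so $\Ht_S((T_1,T_2)S)=2$.

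This is where the main obstacle lies: we must show that no height-two prime of $S$ contains $(T_1,T_2)S$. We try to show that every minimal prime of $(T_1,T_2)S=(\PP_1+\PP_2)S$ contracts to a minimal prime $\mathfrak q$ of $\PP_1+\PP_2$ in $\I$. Then $\Ht_\I\mathfrak q=2$, and the height-two primes of $S$ containing $(T_1,T_2)S$ are exactly the minimal primes of $\mathfrak qS$ for such $\mathfrak q$. We expect $\mathfrak q S$ itself to be prime, since $S/\mathfrak qS\cong(\I/\mathfrak q)[[X_1,\dots,X_d]]$ is a domain. Then $P=\mathfrak qS$, and $P$ is extended from $\I$.

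This is not a contradiction by itself, so the argument must also use that $P$ is an associated prime of the pseudo-null module $M$, which is finitely generated over $R=\I[[H]]$. That hypothesis forces $M$ to be $\I[[H]]$-torsion, so the support of $M$ should not contain extended primes $\mathfrak qS$ with $\mathfrak q$ of height two, as such a prime corresponds to something torsion over $\I$ rather than over $H$. Alternatively, we would compare with the proof of Proposition~\ref{Jha}. There the $\PP$-torsion pieces $E[\PP_i]$ of $E=\Ext^2_S(M,S)$ intersect trivially. Localizing at $P$ and using Proposition~\ref{ext ht}, $E_P$ has finite length over $S_P$ with unique associated prime $P$. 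If both $T_1$ and $T_2$ lay in $P$, then $E_P[T_1]$ and $E_P[T_2]$ would both be nonzero, since $P$ annihilates a nonzero element. But the $\Ext^1_R$-purity argument of Proposition~\ref{Jha} shows $E[T_1]\cap E[T_2]=0$ even though their localizations at $P$ share a common socle element. That is a contradiction, and it gives the claim.
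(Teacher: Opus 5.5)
Your last alternative is a correct proof, and it follows a genuinely different route from the paper. In its simplest form it runs as follows. By Proposition~\ref{ext ht}, a height-two $P\in\Ass_S(M)$ lies in $\Ass_S(E)$ for $E=\Ext^2_S(M,S)$, so some $e\in E$ has $\Ann_S(e)=P$. If $T_1,T_2\in P$ for distinct height-one primes $(T_1),(T_2)$ of $\I$, then $0\neq e\in E[T_1]\cap E[T_2]$. On the other hand, the purity/Rees argument in the proof of Proposition~\ref{Jha}, taken with $n=2$ and $Q=T_1$, gives $E[T_1]\cap E[T_2]=0$. You do not need to localize or invoke finite length; the element $e$ already lies in the intersection.

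The rest of the proposal should be dropped. The claim you first reduce to is false for an arbitrary height-two prime: if $\mathfrak q$ is a minimal prime of $(T_1,T_2)$ in $\I$, then $\mathfrak qS$ has height two and contains both. The reason you give for excluding such $\mathfrak qS$ is also wrong. $S/\mathfrak qS$ is killed by $\mathfrak q\subset R$, so $R$-torsionness of $M$ rules nothing out. What rules it out is finite generation over $R$. Indeed $S/\mathfrak qS\cong(\I/\mathfrak q)[[H]][[Y]]$ has dimension one more than $R/\mathfrak qR$, so it cannot embed in a finitely generated $R$-module.

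That dimension count is exactly the paper's proof, carried out for a general $P$:
\begin{itemize}
\item From $S/P\hookrightarrow M$, the domain $S/P$ is finite over $R/(P\cap R)$, so $\dim S/P=\dim R/(P\cap R)$.
\item Since $\dim S/P=\dim S-2=\dim R-1$, this forces $\Ht_R(P\cap R)=1$.
\item Hence $P\cap R=\PP R$, and $\PP=P\cap\I$ is determined by $P$.
\end{itemize}

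The two routes trade off as follows. The paper's argument is elementary commutative algebra. It yields the sharper fact that $P$ lies over the height-one prime $\PP R$ of $R$. Your argument avoids dimension theory but relies on Proposition~\ref{ext ht} and on the whole homological package of Proposition~\ref{Jha}: Auslander regularity, purity of $\Ext^2$, the Schneider--Venjakob grade shift, and Rees's lemma. In return, it shows that the corollary is the commutative shadow of the independence of the pieces $E[\PP]$ established there.
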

\begin{remark}
    The corollary tells us that the number of exceptional cases is at most the number of height-2 associated prime ideals of $M$. The result is more precise and concrete.
\end{remark}
\begin{proof}
    By Corollary \ref{compare}, the exceptional case $\Ext^2_{S}(M,S)[\PP]\neq 0$ occurs if and only if the ideal $\PP=(T)$ is contained in some height-2 associated prime ideal $P$ of $M$. Since $P$ is an associated prime of $M$, we have $S/P\hookrightarrow M$. Since $M$ is finitely generated as an $R$-module, $S/P$ is finitely generated as an $R/(P\cap R)$-module. We have $\dim S/P=\dim R/(P\cap R)$. On the one hand, $\dim S/P=\dim S-\Ht(P)=\dim S-2=\dim R-1$. On the other hand, $\dim R/(P\cap R)=\dim R-\Ht_R(P\cap R)$. Hence, $\Ht(P\cap R)=1$. Since $T$ is a regular element, $\Ht_R(\PP R)\geq 1 $. Since $\PP R\subset P\cap R$, we have $P\cap R=\PP R$. Hence, a height-2 associated prime ideal $P$ determines $\PP $ uniquely.  
\end{proof}

\printbibliography
\end{document}